\pgfplotsset{compat=1.8}
\numberwithin{equation}{section}
\providecommand{\keywords}[1]
{
  \small	
  \textbf{\textit{Keywords---}} #1
}
\providecommand{\subjclass}[2]{%
 \small	
  \textbf{\textit{2010 Mathematics subject classification---}} #1
}
\newcommand{\R}{\mathbb{R}}
\newcommand{\N}{\mathbb{N}}
\newcommand{\Z}{\mathbb{Z}}
\newcommand{\F}{\mathcal{F}^{(n)}}
\newcommand{\E}{\ensuremath\mathbb{E}}
\newcommand{\1}{\textup{$\mathds{1}$}}
\newcommand{\Pro}{\ensuremath\mathbb{P}}
\renewcommand{\leq}{\leqslant}
\renewcommand{\geq}{\geqslant}
\renewcommand{\bar}{\overline}
\renewcommand{\tilde}{\widetilde}
\newcommand{\tn}{\Delta t^{(n)}}
\newcommand{\xn}{\Delta x^{(n)}}
\newcommand{\pn}{\Delta p^{(n)}}
\newcommand{\vn}{\Delta v^{(n)}}
\newcommand{\pin}{\pi^{(n)}}
\newcommand{\ttn}{\lfloor t/\tn \rfloor}
\newcommand{\s}{S_{k-1}^{(n)}}
\newcommand{\Bn}{B^{(n)}}
\newcommand{\dBn}{\delta B^{(n)}}
\newcommand{\x}{x^{(n)}}
\renewcommand{\t}{t^{(n)}}
\newtheorem{The}{Theorem}[section]
\newtheorem*{The*}{Theorem}
\newtheorem{Exp}{Example}
\newtheorem{Ass}{Assumption}
\newtheorem{Lem}[The]{Lemma}
\newtheorem{Cor}[The]{Corollary}
\newtheorem{Rmk}[The]{Remark}
\newtheorem{Prop}[The]{Proposition}
\begin{document}

\title{Jump diffusion approximation for the price dynamics of a fully state dependent limit order book model}
\date{}
\author{D\"orte Kreher\footnote{Humboldt-Universit\"at zu Berlin, Germany. E-mail: \url{kreher@math.hu-berlin.de}} \qquad Cassandra Milbradt\footnote{Humboldt-Universit\"at zu Berlin, Germany. E-mail: \url{cassandra.milbradt@gmail.com}}}

\maketitle

\begin{abstract}
We study a microscopic limit order book model, in which the order dynamics depend on the current best bid and ask price and the current volume density functions, simultaneously, and derive its macroscopic high-frequency dynamics. As opposed to the existing literature on scaling limits for limit order book models, we include price changes which do not scale with the tick size in our model to account for large price movement, being for example triggered by highly unforeseen events. Our main result states that, when the size of an individual limit order and the tick size tend to zero while the order arrival rate tends to infinity,  the microscopic limit order book model dynamics converge to two one-dimensional jump diffusion processes describing the prices coupled with two infinite dimensional fluid processes describing the standing volumes at the buy and sell side.
\end{abstract}

 \hspace{-0.26cm} \keywords{limit order book,\, market microstructure,\, high-frequency limit,\, semimartingale convergence,\, jump diffusion}\newline

 \subjclass{60F17,\, 91B70}

\section{Introduction}

Electronic limit order books are widely used tools in economics to carry out transactions in financial markets. They are records, maintained by an exchange or specialist, of unexecuted orders awaiting execution. Motivated by the tremendous increase of trading activities, stochastic analysis provides powerful tools for understanding the dynamics of a limit order book via the description of suitable scaling (``high-frequency'') limits. Scaling limits allow for a tractable description of the macroscopic dynamics (prices and standing volumes) derived from the underlying microscopic dynamics (individual order arrivals). While modelling liquid stock markets, prices are typically approximated in the scaling limit by diffusion processes, cf. for example \cite{BHQ17, HorstKreherDiffusion, CL13, Rosenbaum}.  At the same time, there is a broad consensus that (large) jumps may occur as responses to highly unexpected news, cf. \cite{BC17, F02, JLG08}. In this paper, we specify reasonable microscopic dynamics of a limit order book which takes the possibility of such surprising news flows into account. Introducing suitable scaling constants, we derive the functional convergence of the microscopic model to a coupled system of two one-dimensional jump diffusions and two infinite dimensional stochastic fluid processes, describing the price and volume dynamics, respectively.\par
At any given point in time, a limit order book depicts the number of unexecuted buy and sell orders at different price levels,  cf. Figure \ref{Fig:orderBook}. The highest price a potential buyer is willing to pay is called the best bid price, whereas the best ask price is the smallest price of all placed sell orders. 
Incoming limit orders can be placed at many different price levels, while incoming market orders are matched against standing limit orders according to a set of priority rules.

\begin{figure}[H]
\centering
\includegraphics[scale=0.45]{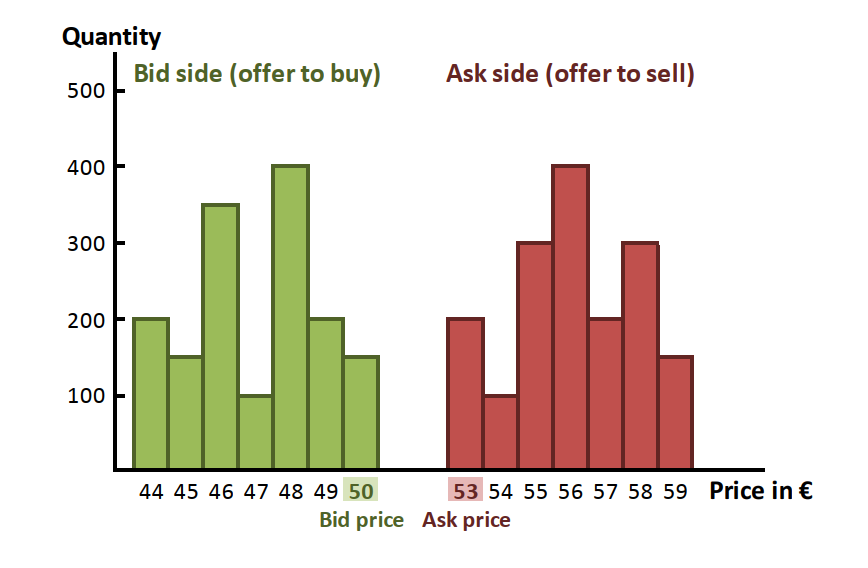}
\caption{\small Illustration of the state of a limit order book model.}
\label{Fig:orderBook}
\end{figure}

One popular approach to study limit order books is based on event-by-event descriptions of the order flow as done in e.g. \cite{L03, CST10, HP17, HorstKreherCLT, HorstKreherDiffusion, HorstKreherFluid, Gao, CL13, KY18}. The derived stochastic systems typically yield realistic models as they preserve the discrete nature of the dynamics at high frequencies, but turn out to be computationally challenging. They also give only little insight into the underlying structure of the order flow. To overcome the drawbacks of these models, some researchers deal with continuum approximations of the order book, describing it through its time-dependent density satisfying either certain partial differential equations as in \cite{LL07, CGGK09, CMP11, BCMW13} or certain stochastic partial differential equations as in \cite{MTW16, ContMueller}.\par
Combining these two approaches, one can introduce suitable scaling constants in the microscopic order book dynamics and study its scaling behaviour when the number of orders gets large while each of them is of negligible size. The scaling limit can then either be described through a system of (partial) differential equations (in the ``fluid'' limit, where random fluctuations vanish) or through a system of stochastic (partial) differential equations (in the ``diffusion'' limit, where random fluctuations dominate) or through a mixture thereof. Deriving a deterministic high-frequency limit for limit order book models guarantees that the scaling limit approximation stays tractable in view of practical applications. Such an approach is pursued by Horst and Paulsen \cite{HP17}, Horst and Kreher \cite{HorstKreherFluid}, and Gao and Deng \cite{Gao}. In contrast, a diffusion limit for the order book dynamics can be found in Cont and de Larrard \cite{CL13} or Horst and Kreher \cite{HorstKreherDiffusion}. While \cite{CL13} only analyses the volumes standing at top of the book, \cite{HorstKreherDiffusion} takes the whole standing volumes into account leading to both, a diffusive price and a diffusive volume approximation. Depending on the market and/or stock of interest either a fluid or a diffusive volume approximation seems to be appropriate. However, a diffusive infinite dimensional volume process makes practical applications more difficult as e.g.~the uniqueness of a solution to the infinite dimensional stochastic differential equation need not guaranteed, cf.~\cite{HorstKreherDiffusion}. On the other hand, absence of arbitrage considerations encourage price approximations by diffusion processes. \par

Our model is a further development of the model considered in \cite{HorstKreherFluid} and \cite{HorstKreherDiffusion}, where the order book dynamics are influenced by both, current bid and ask prices as well as standing volumes of the bid and sell sides. This is a reasonable starting point as there is considerable empirical evidence (see, e.g. \cite{BHS95, CH15, HH12}) that the state of the order book, especially the order imbalance at the top of the book, has a noticeable impact on order dynamics. In \cite{HorstKreherFluid} and \cite{HorstKreherDiffusion} the authors start from an event-by-event description of a limit order market based on the submission of market orders, limit orders, and the cancellation thereof. Their description allows them to write down the evolution of the bid and ask price and the buy and sell side volume density functions, which are both denoted in relative price coordinates. Denoting volumes in relative price coordinates is appealing from a modelling point of view as the empirical distribution of limit order placements at a given distance from the best price is almost stationary, cf.~\cite{BFL09,CST10}. An important simplifying assumption made in \cite{HorstKreherFluid,HorstKreherDiffusion} is that all price changes are assumed to be equal to the tick size and hence become infinitely small in the limit. This seems to be appropriate in
an efficient market setting with high liquidity, cf.~\cite{FGLMS04}. However, if the sizes of all price changes become negligible in the limit, there is no possibility to include jumps in the macroscopic price approximation. Even in highly liquid markets, price jumps occur with positive probability, cf.~\cite{BC17}. As discussed in several empirical studies (cf.~e.g.~\cite{F02, JLG08}) price jumps may be caused by exogenous news. However, most of them cannot be shown to be related to unforeseen news and are understood as endogenous shocks, cf.~\cite{BC17}. No matter what causes large price movements, there is a need for an approximation which takes price jumps into account. In addition, such a model also allows for a reasonable approximation of intraday electricity markets with a continuous trading mechanism (such as the SIDC market), where extreme price spikes during a single trading day can be observed: Figure \ref{Fig:OM18} depicts the EPEX SPOT intraday prices from one single day. Comparing the minimum and maximum intraday prices paid for one single delivery time (products for different delivery times are carried out through different order books), huge differences occur and prices can even become negative. 

\begin{figure}[H]
\centering
\includegraphics[scale=0.45]{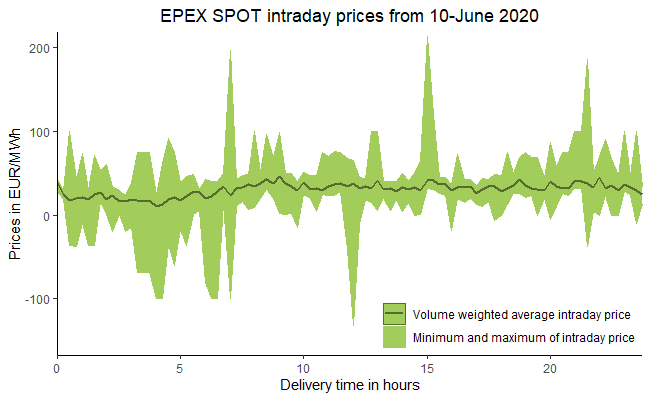}
\caption{\small Extreme price differences occur between minimum and maximum intraday prices in the German intraday electricity market.}
\label{Fig:OM18}
\end{figure}

Motivated by these facts, we extend the results of \cite{HorstKreherFluid,HorstKreherDiffusion} in two ways. First, we take their event-by-event description of a limit order book model and include price changes which do not scale to zero. While doing so, we allow the jump sizes to vary across different states of the LOB, but we require the jump intensities to be approximately the same. This indeed allows for a jump diffusion approximation of the price dynamics in the scaling limit, as the driver becomes independent of the order book dynamics. Second, we combine the diffusive price approximation from \cite{HorstKreherDiffusion} with the fluid approximation for the relative volume density process from \cite{HorstKreherFluid}. Therefore, we end up with two one-dimensional stochastic differential equations describing the bid and ask price dynamics coupled with two infinite dimensional fluid processes approximating the relative volume dynamics of the bid and ask side in the scaling limit. Conditionally on the price movements, the latter behave like deterministic PDEs, since random fluctuations of the queue sizes vanish. However, they are still random because their coefficients depend on the whole limit order book dynamics, including prices. To give the reader an intuition for our model, we perform a simulation study of the full order book dynamics in Section \ref{sec:simStudy}, in which we allow the probabilities of different price movements as well as the limit order sizes and cancellations to depend on the spread and standing volumes, especially on order imbalances. 
\par 
Mathematically, we derive a limit theorem which goes beyond the standard theory of finite-dimensional, diffusive limit processes with continuous sample paths. First, our state process takes values in an infinite dimensional Hilbert space, which requires to apply results from Kurtz and Protter \cite{KurtzProtter2}. Second, while conditions to derive limit theorems towards diffusion processes (in finite dimensions) are generally well-understood, to the best of our knowledge this is not true for a jump diffusion limit. Therefore, one of our main mathematical achievements is to formulate the correct assumptions, which allow to derive such a limit process. Furthermore, to deal with the possible discontinuities of the limit process we need to apply many not so well-known extensions of otherwise well-known results about the convergence of probability measures in the Skorokhod space, which can be found in \cite{W80,W02}. Last but not least, while the correct scaling assumptions to obtain a diffusive-fluid system can readily be deduced from \cite{HorstKreherFluid,HorstKreherDiffusion}, the mixture of the two results requires a totally new idea of the proof.\par

To this end, we first approximate the sequence of discrete-time limit order book models $S^{(n)}$ by a sequence of discrete-time processes $\tilde{S}^{(n)}$, in which we replace the random fluctuations in the volume dynamics by their conditional expectations. This simplifies the subsequent analysis as the new infinite dimensional system $\tilde{S}^{(n)}$ is driven by two independent one-dimensional noise processes. However, the original volume processes $v_b^{(n)}$ and $v_a^{(n)}$ describe the volume dynamics relative to the best bid and ask price, respectively, and therefore do not only depend on the previous order book dynamics, but also on the current prices. This prevents us from directly applying convergence results for infinite dimensional semimartingales.\footnote{Indeed, for this reason in \cite{HorstKreherDiffusion} the absolute volume function is taken as part of the state variable. However, as argued above from a modelling point of view taking the relative volume function is more sensible.} To bypass this problem, we construct approximate order book dynamics with respect to absolute volume functions, denoted $\tilde{S}^{(n),abs}$, through a random shift in the location variable. This allows us to apply results of Kurtz and Protter \cite{KurtzProtter2} on the weak convergence of stochastic integrals in infinite dimensions and to prove that $\tilde{S}^{(n),abs}$ converges weakly in the Skorokhod topology to the unique solution of a coupled diffusion-fluid system. Finally, exploiting the properties of the Skorokhod topology, we can conclude the weak convergence of our original discrete order book dynamics $S^{(n)}$ to $S$ being the unique solution of a coupled diffusion-fluid system. 

\subsection{Jump diffusion approximation: empirical motivation}

Additionally to the references about the occurrence of jumps in equity prices cited above, we provide in this subsection some empirical motivation for our model based on order book data from the European intraday electricity market SIDC. On SIDC electricity contracts with different durations (hour, half-hour, quarter-hour) and delivery times are traded through different order books. In the following we analyse data from the German market area for the hourly product with delivery time $1\,$pm from March 5, 2020.\footnote{The data is publicly available at \url{https://www.epexspot.com/}} Considering the evolution of the best bid and ask prices during the last five hours before closing, we observe the occurrence of an extreme price increase over a short time period shortly after $12\,$pm.  

\begin{figure}[H]
    \centering
    \includegraphics[scale = 0.35]{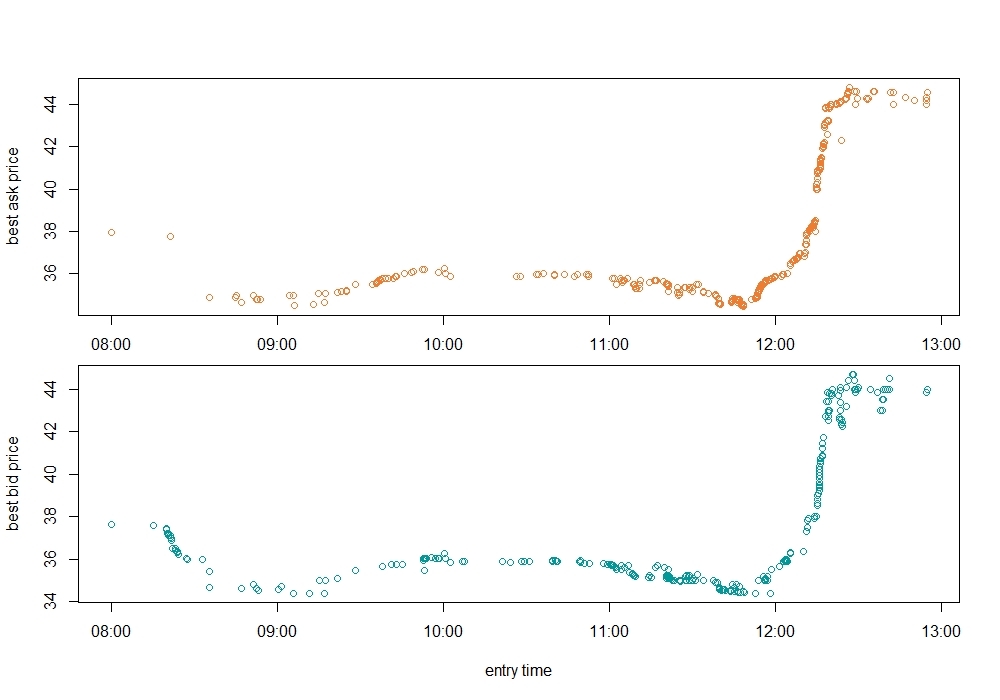}
    \caption{Evolution of the best bid and ask price of the hour product with delivery time $1\,$pm from the German SIDC market area on March 5, 2020.}
    \label{fig:priceEvSIDC}
\end{figure}

To statistically validate the observation of a large price jump in the considered data set, we use the statistical test introduced in Barndoff-Nielsen and Shephard \cite{BNS06}, which is based on bi-power variations. Given data points $X_{0}, \cdots, X_{n}$ at time points $0 = t_0, \cdots, t_n = T$ with fixed interval time length $|t_i - t_{i-1}| = \delta$ for $i = 1, \cdots, n$, the test statistic is given by 
\begin{equation*}
    Z_n := \sqrt{\frac{\delta^{-1}}{\vartheta QPV_n}}\left(RV_n - BPV_n\right)\quad \text{with}\quad \vartheta := (\pi^2/4) + \pi - 5,
\end{equation*}
where $RV_n$ denotes the realized volatility, $BPV_n$ the bi-power variation, and $QPV_n$ the quad-power variation. In \cite{BNS06} it is shown that $Z_n$ is asymptotically standard normally distributed under the null-hypothesis that the data points $X_{0}, \cdots, X_{n}$ are generated by a continuous process $X$. As we consider high-frequency limits as approximations of our price dynamics, we need to analyse the price evolution in a time scale much larger than the interarrival time between price changes. In our data set, the mean interarrival time between price changes is $44$ seconds at the bid and $35$ seconds at the ask price. Therefore, we choose $\delta = 20$ minutes, for which we calculate the values of the test statistic to be $Z_n=15.84$ (bid) and $Z_n=14.27$ (ask). Similar values can be obtained for other durations and delivery times. These values are considerably high and we reject the null-hypothesis that the price data is generated by a continuous process at any reasonable significance level. This provides some first empirical evidence for the occurrence of price jumps in intraday electricity markets and suggests that any reasonable model of intraday electricity price dynamics should take (large) price jumps into account. 

\subsection{Outline of the paper}

The remainder of the paper is structured as follows: Section 2 describes a microscopic, stochastic model for a two-sided limit order book. Moreover, we introduce assumptions under which we are able to establish a scaling limit for the model dynamics and state our main result. In Section 3 we present a simulation study of the full order book dynamics. As our assumption for the large price jumps (cf. Assumption \ref{ass:ExQ} in Section \ref{sec:setup}) might be rather technical at first sight, we provide two examples of jump behaviours in Section 4 which are supported by our model. In Section 5 we state a proof sketch of our main theorem, whereas the technical details are presented in Section 6. The appendix contains some auxiliary results. \medskip

\textbf{Notation.} In the following, $\lambda$ denotes Lebesgue measure and $\varepsilon_x$ denotes Dirac measure at $x \in \R,$ i.e.~for any $A \in \mathcal{B}(\R)$ we have $\varepsilon_x(A) = 1$ if $x \in A$ and $\varepsilon_x(A) = 0$ otherwise. For a discrete-time stochastic process $X := \{X_k: k\in \N_0\}$ let $\delta X_k := X_k - X_{k-1},\ k\in\N,$ denote the $k$-th increment of $X.$ For any continuous-time stochastic process $Y,$ let $\Delta Y(t) := Y(t) - Y(t-)$ denote the jump of $Y$ at time $t>0.$ Last, for any $\R$-valued function $f: E \rightarrow \R,$ we denote by $f^+(x) := \max\{f(x), 0\}$ and $f^-(x) := -\min\{f(x), 0\}$ the positive and negative part of the function $f,$ respectively.

\section{Setup and main result} \label{sec:setup}

In what follows, we fix some finite time horizon $T > 0$ and introduce the Hilbert space
\begin{displaymath}
    E := \R \times L^2(\R) \times \R \times L^2(\R) \times [0,T],\hspace{0.3cm} \|(b,v,a,w,t,y)\|^2_E := |b|^2 + \|v\|^2_{L^2} + |a|^2 + \|w\|^2_{L^2} + |t|^2.
\end{displaymath}
We describe the random evolution of a sequence of limit order book models through a sequence of $E$-valued stochastic processes $S^{(n)} = (B^{(n)}, v^{(n)}_b, A^{(n)}, v^{(n)}_a, \tau^{(n)}),$ where for each $n \in \N$ the $\R$-valued processes $B^{(n)},$ $A^{(n)}$ specify the dynamics of the best bid and ask prices, the $L^2(\R)$-valued processes $v^{(n)}_b,$ $v^{(n)}_a$ specify the dynamics of the buy and ask side volume density functions \textit{relative} to the best bid and ask price and the $[0,T]$-valued process $\tau^{(n)}$ describes the dynamics of the order arrival times. For each $n\in\N$, $S^{(n)}$ is defined on a probability space $(\Omega^{(n)}, \F, \Pro^{(n)}),$ where we will write $\E$ and $\Pro$ instead of $\E^{(n)}$ and $\Pro^{(n)}.$ 

\begin{Rmk}\label{rmk:extInfl}
In order to simplify the subsequent analysis, we include the order arrival times $\tau^{(n)}$ in the state dynamics of our LOB model. In the same manner, one can also include additional exogenous  factor processes: let $Y^{(n)}$ be an $\R^d$-valued stochastic process with almost surely càdlàg sample paths. Then, the state space of the LOB model $(S^{(n)}, Y^{(n)})$ is given by 
\begin{displaymath}
E':= E \times \R^d,\quad \|\cdot\|^2_{E'} := \|\cdot\|^2_{E} + \|\cdot\|^2_{\R^d}.
\end{displaymath}
To derive a high-frequency approximation under this more general setting, additional conditions on the convergence of $Y^{(n)}$  as $n\rightarrow\infty$ have to be satisfied. The factor process $Y^{(n)}$ can be used to model external influences on the LOB dynamics, such as the infeed of renewables in intraday electricity markets, the performance of a stock index in equity markets, or political influences on general market conditions.
\end{Rmk} 

The order book changes due to arriving market and limit orders and due to cancellations, where we differentiate between so-called passive limit orders that are placed on top of standing volumes and aggressive limit orders that are placed inside the spread. In the $n$-th model, the $k$-th order event occurs at a random point in time $\tau^{(n)}_k.$  Throughout, we assume that $\tau^{(n)}_0 = 0$ for all $n\in \N.$ The time between two consecutive order events will tend to zero as $n \rightarrow \infty.$ Furthermore, we introduce the tick size $\xn$ and the average size of a passive limit order placement $\vn$, which are both assumed to tend to zero as $n\rightarrow\infty$. We put $\x_j := j \xn$ for $j \in \Z$ and define the interval $I^{(n)}(x)$ as
\begin{equation}
    I^{(n)}(x) := \left[\x_{j}, \x_{j+1}\right) \quad \text{ for } \x_j \leq x < \x_{j+1}.
\end{equation}
Further, we denote by $\xn \Z := \{\x_j: j \in \Z\}$ the $\xn$-grid. In the following, we specify the sequence of order book models for which we establish a scaling limit when the tick size and the average limit order size tend to zero while the number of order events tends to infinity.\par

In order to model placements of limit orders inside the spread, the relative volume density functions $v^{(n)}_{b}$ and $v^{(n)}_{a}$ are defined on the whole real line. We refer to the volumes standing at negative distances from the best bid/ask price as the shadow book, cf. Figure \ref{fig:shBook}. The idea of the shadow book is taken from \cite{HP17} and was subsequently also used in \cite{BHQ17, HorstKreherFluid, HorstKreherDiffusion, HorstKreherCLT}. It has to be understood as a technical tool to model the conditional distribution of the size of limit order placements inside the spread. Each current volume density function of the visible book is extended in a sufficiently ``smooth'' way to the left to obtain a well-defined scaling limit for the volume functions. The shadow book follows the same dynamics as the volumes of the visible book and becomes part of the visible book through price changes (cf. Example \ref{exp:shadowBook} below). Needless to say, the shadow book cannot be observed in real world markets, but this does not play a role for our analysis, cf.~also Remark \ref{rmk:filtrations} below. 

\begin{figure}[H]
\centering
\includegraphics[scale=0.4]{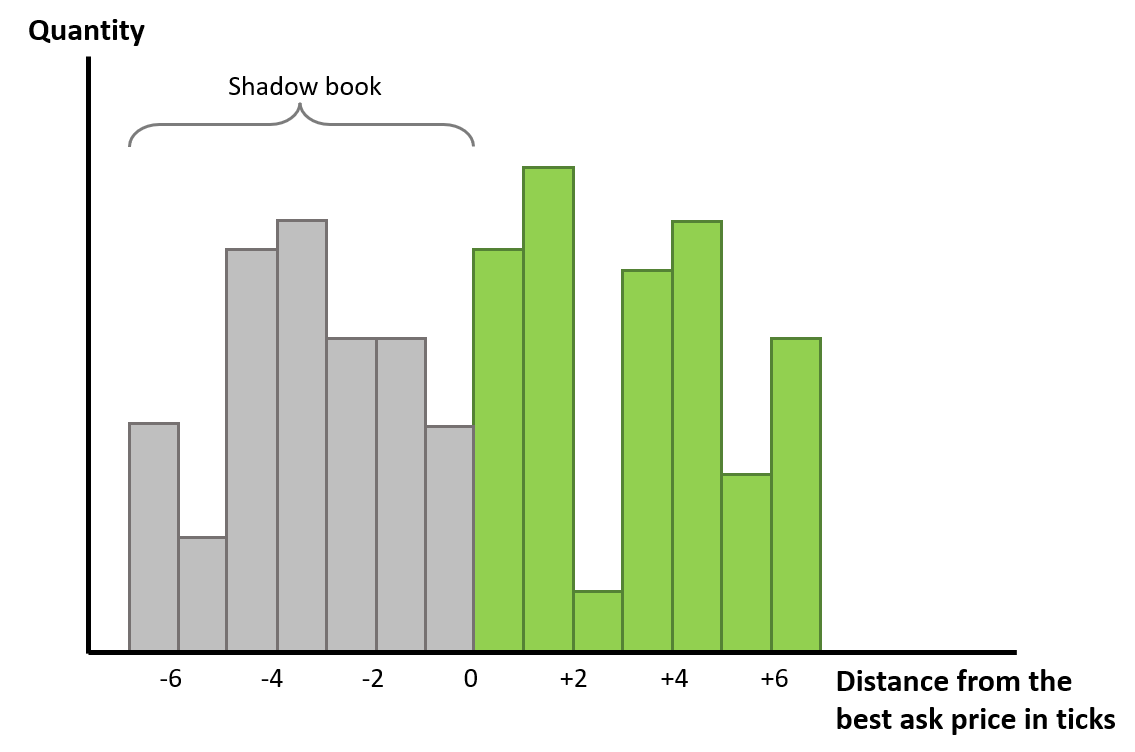}
\caption{Ask-side volume density function in relative coordinates; green: standing volume; grey: shadow book.}
\label{fig:shBook}
\end{figure}

\subsection{The initial state}

In the $n$-th model, the initial state of the limit order book is given by a (positive) best bid price $B^{(n)}_{0}\in \xn\Z$, a (positive) best ask price $A^{(n)}_{0} \in \xn\Z$ satisfying $B_0^{(n)}<A^{(n)}_0$, and non-negative buy and ask side volume density functions $v^{(n)}_{b,0}, v^{(n)}_{a,0} \in L^2(\R)$, which are given \textit{relative} to the best bid and ask price. Here, $v_{b,0}^{(n)}$ and $v_{a,0}^{(n)}$ are supposed to be deterministic step functions which only jump at points in $\xn \Z$. To be precise,
\begin{displaymath}
\int_{\x_j}^{\x_{j+1}} v^{(n)}_{a,0}(x) dx
\end{displaymath}
represents the liquidity available at time $t=0$ for buying (sell side of the book) at a price which is $j \in \N_0$ ticks \textit{above} the best ask price. Similarly,
\begin{displaymath}
\int_{\x_l}^{\x_{l+1}} v^{(n)}_{b,0}(x) dx
\end{displaymath}
gives the liquidity available at time $t=0$ for selling (buy side of the book) at a price which is $l \in \N_0$ ticks \textit{below} the best bid price.\footnote{Note that we choose to model the standing volumes at the bid side through the positive instead of the negative half-line so that we can treat both volume density functions in the same manner.}\par 

At time $t = 0$ the state of the limit order book is deterministic for all $n \in \N$ and is denoted by
\begin{displaymath}
S^{(n)}_0 := \left(B^{(n)}_{0},\, v^{(n)}_{b,0},\, A^{(n)}_{0},\, v^{(n)}_{a,0},\, 0\right) \in E.
\end{displaymath}
In order to prove a convergence result for the microscopic order book sequence to a high-frequency limit, we need to state convergence assumptions on the initial values.

\begin{Ass}[Convergence of the initial states]\label{ass:IV}
There exist a constant $L > 0$ and non-negative functions $v_{b,0}, v_{a,0} \in L^2(\R)$ such that for any $x, \tilde{x} \in \R$ and $I = b,a,$
 \begin{equation}\label{ass:Lcond}
  \left\|v_{I,0}(\cdot + x) - v_{I,0}(\cdot + \tilde{x})\right\|_{L^2} \leq L|x-\tilde{x}|
 \end{equation}
 and 
 \begin{equation}\label{ass:Lconv}
\left\|v_{I,0}^{(n)} - v_{I,0}\right\|_{L^2} \longrightarrow 0.
 \end{equation}
Also, there exist $B_{0}, A_{0} \in \R_+$ with $B_{0} \leq A_{0}$ such that $B^{(n)}_{0} \rightarrow B_{0}$ and $A^{(n)}_{0} \rightarrow A_{0}$. We denote $S_0:= (B_{0},\, v_{b,0},\, A_{0},\, v_{a,0},\, 0,\, Y_0) \in E$.
\end{Ass}

If $v_{I,0} \in L^2(\R),$ $I = b,a,$ is Lipschitz-continuous with constant $L> 0$ and has compact support in $[-M, M]$ for $M>0$, then \eqref{ass:Lcond} is satisfied for the constant $2L\sqrt{M}>0$, i.e. for $x, \tilde{x} \in \R$, we have
\begin{displaymath}
\left\|v_{I,0}(\cdot + x) - v_{I,0}(\cdot + \tilde{x})\right\|_{L^2} \leq L\left\|\1_{[-2M,2M]}(\cdot)|x - \tilde{x}|\, \right\|_{L^2} \leq 2L\sqrt{M} |x-\tilde{x}|.
\end{displaymath}
Furthermore, \eqref{ass:Lcond} and \eqref{ass:Lconv} together imply a similar Lipschitz condition for the $v_{I,0}^{(n)},\ n\in\N$, up to a deterministic sequence $(a_n)_{n\in \N}$ converging to zero: for any $x, \tilde{x} \in \R$, it holds
\begin{displaymath}
\left\|v_{I,0}^{(n)}(\cdot + x)- v_{I,0}^{(n)}(\cdot + \tilde{x})\right\|_{L^2} \leq a_n +L|x - \tilde{x}|.
\end{displaymath}

\subsection{Arrival times and event types}

The order book changes due to incoming order events.
In the $n$-th model, the consecutive times of incoming order events are described by
\begin{equation}\label{dynTimes}
\tau^{(n)}_k = \tau^{(n)}_{k-1} + \varphi^{(n)}_k\tn,\quad k\in\N,
\end{equation}
where $(\varphi^{(n)}_k)_{k\geq 1}$ is a sequence of positive random variables that (scaled by $\tn$) specify the duration between two consecutive order events and $\tn$ goes to zero as $n\rightarrow\infty$. Further, we denote by $N^{(n)}_t$ the random number of incoming order events in $[0,t]$ for any $t\leq T$.

 In the following, we will differentiate between four types of events that may change the state of the book at each time $\tau_k^{(n)}$: 
\begin{enumerate}
    \item[\textbf{A.}] Either a market sell order of size equal to the first $\xi^{(n)}_k$ queues of the bid volumes arrives, which forces the best bid price to decrease by $\xi^{(n)}_k$ ticks and the relative bid side volume density function to shift $\xi^{(n)}_k$ ticks to the left, or an aggressive buy limit order is placed inside the spread, which forces the best bid price to increase by $\xi^{(n)}_k$ ticks and the relative bid side volume density function to shift $\xi^{(n)}_k$ ticks to the right.
    \item[\textbf{B.}] A passive buy limit order placement of size $\frac{\vn}{\xn}\omega^{(n)}_{k}$ at price level $\pi^{(n)}_{k}$ relative to the best bid price occurs. If $\omega^{(n)}_{k} < 0$, this correspond to a cancellation of volume.
    \item[\textbf{C.}] Either a market buy order of size equal to the first $\xi^{(n)}_k$ queues of the ask volumes arrives, which forces the best ask price to increase by $\xi^{(n)}_k$ ticks and the relative ask side volume density function to shift $\xi^{(n)}_k$ ticks to the left, or an aggressive sell limit order is placed inside the spread, which forces the best ask price to decrease by $\xi^{(n)}_k$ ticks and the relative ask side volume density function to shift $\xi^{(n)}_k$ ticks to the right.
    \item[\textbf{D.}] A passive sell limit order placement of size $\frac{\vn}{\xn}\omega^{(n)}_{k}$ at price level $\pi^{(n)}_{k}$ relative to the best ask price occurs. If $\omega^{(n)}_{k} < 0,$ this correspond to a cancellation of volume.
\end{enumerate}\smallskip

The event types $A$ and $C$ lead to price changes of the best bid respectively ask price and will be called active order events. Here, the $\Z$-valued random variable $\xi^{(n)}_k$ specifies the number of ticks the respective price process changes. In contrast, the event types $B$ and $D$ do not lead to changes in the best bid and ask price and will be referred to as passive order events. Here, the $\R$-valued random variables $\omega^{(n)}_k$ and $\pi^{(n)}_k$ specify the size and the location of a placement/cancellation, which does not result in a price change. In the following, event types are determined by a field of random variables $(\phi^{(n)}_k)_{k,n \in \N}$ taking values in the set $\{A, B, C, D\}.$ \par

\subsection{State dynamics of the order book}

The price dynamics of the LOB-models can be described as follows: for each $k,n \in \N$,
\begin{equation}\label{dynPrice}
 B^{(n)}_{k} = B^{(n)}_{k-1} + \1\left(\phi^{(n)}_k = A\right) \xn\xi^{(n)}_{k}, \qquad 
A^{(n)}_{k} = A^{(n)}_{k-1} + \1\left(\phi^{(n)}_k = C \right)\xn\xi^{(n)}_{k}.
\end{equation}
Next, we define the placement/cancellation operator in the following way:
\begin{align}
\begin{aligned}
 M^{(n)}_{b,k}(x):=\frac{\omega^{(n)}_{k}}{\xn}\1\left(x \in I^{(n)}(\pi^{(n)}_{k})\right)\1\left(\phi^{(n)}_k = B\right),\\
 M^{(n)}_{a,k}(x) := \frac{\omega^{(n)}_{k}}{\xn}\1\left(x \in I^{(n)}(\pi^{(n)}_{k})\right) \1\left(\phi^{(n)}_k = D\right).
 \end{aligned}
\end{align}
Then, the dynamics of the volume density function \textit{relative} to the best bid and ask price, respectively, are described by
\begin{align}\label{dynVol}
\begin{aligned}
 v^{(n)}_{b,k}(x) = v_{b,k-1}^{(n)}\left(x - \delta B_{k}^{(n)}\right) + \vn M^{(n)}_{b,k}(x),\\
 v^{(n)}_{a,k}(x) = v_{a,k-1}^{(n)}\left(x + \delta A_{k}^{(n)}\right) + \vn M^{(n)}_{a,k}(x),
 \end{aligned}
\end{align}
for all $x \in \R$. On a first sight, this approach could potentially lead to negative volumes. However, this can be avoided by imposing additional assumptions on the joint conditional distribution of the random variables $\phi_k^{(n)},\omega_k^{(n)},\pi_k^{(n)}$, cf.~Assumption \ref{ass:restLOB} below. For example, volume cancellations can be modelled to be proportional to the standing volume as done in our simulation study in Section \ref{sec:simStudy}.
\par
Note that volume changes take place in the visible or shadow book, depending on the sign of $\pi^{(n)}_k.$ If $\pi^{(n)}_k \geq 0$, then the visible book changes; if $\pi^{(n)}_k <0,$ then the placement/cancellation takes place in the shadow book. The shadow book interacts with the visible book through price changes, which shift the relative volume density functions. The following example illustrates the working of the shadow book.

\begin{Exp}[The shadow book]\label{exp:shadowBook}
    Suppose that the $k$-th incoming order event is a limit order placement into the shadow book one tick above the current best bid price, i.e.
\begin{displaymath}
    \phi^{(n)}_k = B, \quad \pi^{(n)}_k = - \xn, \quad \text{ and } \quad \omega^{(n)}_k > 0.
\end{displaymath}
    Furthermore, suppose that the $(k+1)$-th event is an aggressive buy limit order placement in the spread up to two ticks above the best bid price, i.e. $\phi^{(n)}_{k+1} = A$ and $\xi^{(n)}_ {k+1} = 2.$ Then,
\begin{displaymath}
    B^{(n)}_{k+1} = B^{(n)}_{k} + 2\xn = B^{(n)}_{k-1} + 2\xn
\end{displaymath}
    and for all $x \in \left[\xn, 2\xn\right)$ corresponding to standing volumes one tick above the current best bid price,
 \begin{displaymath}
    v^{(n)}_{b,k+1}(x) = v^{(n)}_{b,k}\left(x - 2\xn\right) = v^{(n)}_{b,k-1}\left(x - 2 \xn\right) + \frac{\vn}{\xn}\omega^{(n)}_k,
 \end{displaymath}
    while for all $x \notin \left[\xn, 2\xn\right),$
\begin{displaymath}
    v^{(n)}_{b,k+1}(x) = v^{(n)}_{b,k}\left(x - 2\xn\right) = v^{(n)}_{b,k-1}\left(x - 2\xn\right).
\end{displaymath}
\end{Exp}

For each $n \in \N$, the microscopic limit order book dynamics are defined through the piecewise constant interpolation
\begin{equation*}
    S^{(n)}(t) := S^{(n)}_k \quad \text{ for } t\in \left[\tau^{(n)}_k, \tau^{(n)}_{k+1}\right)\cap[0,T]
\end{equation*}
of the $E$-valued random variables 
\[S^{(n)}_k := \left(B^{(n)}_k,\, v^{(n)}_{b,k},\, A^{(n)}_k,\, v^{(n)}_{a,k},\, \tau^{(n)}_k\right),\quad k\in\N_0.\]
Finally, for each $k,n \in \N$, we introduce the $\sigma$-field $\F_k:= \sigma(\varphi^{(n)}_j, \phi^{(n)}_j, \omega^{(n)}_j, \pi^{(n)}_j,\xi^{(n)}_j: j \leq k)$ and assume that $S_k^{(n)}$ is $\F_k$-measurable. We denote $\mathcal{B}^{(n)}:= (\Omega^{(n)}, \F, (\F_k)_{k\geq 1}, \Pro^{(n)})$ for all $n\in\N$.

\begin{Rmk}
     The state and time dynamics of our model are driven by the random variables $(\varphi^{(n)}_k)$ (event times), $(\phi^{(n)}_k)$ (event types), $(\omega^{(n)}_{k})$ (sizes of passive orders), $(\pi^{(n)}_{k})$ (relative price levels of passive orders), and $(\xi^{(n)}_k)$ (sizes of price changes). In particular, the process $S^{(n)}$, $n \in \N,$ is adapted to the filtration $\mathbb{G}^{(n)} = (\mathcal{G}^{(n)}_t)_{t \in [0,T]}$, where $\mathcal{G}^{(n)}_t :=\F_{N_t^{(n)}}$ for $t\in[0,T]$.
\end{Rmk}

\subsection{The main result: High-frequency approximation of the microscopic model}

In this section we state our assumptions regarding the distributional properties of the arrival times, price changes, and order placement/cancellations as well as an assumption on the relation between the scaling parameters $\tn, \xn$, and $\vn$, which will allow us to derive a high-frequency limit. We then present our main result. \newline

 First, we assume that the second moment of the unscaled interarrival times is uniformly bounded, which ensures that the random fluctuations of the order arrival time dynamics will vanish in the high-frequency limit. Moreover, we assume that the conditional expectation of each interarrival time only depends on the current state of the order book. 

\begin{Ass}[Conditions on interarrival times]\label{ass:randTimes}
$ $
\begin{enumerate}
    \item[i)] It holds $\sup_{k,n\in \N} \E\left[ \left(\varphi^{(n)}_k\right)^2\right] <\infty.$
    \item[ii)] Moreover, there exist measurable, bounded functions $\varphi^{(n)}: E \rightarrow (0, \infty),\ n\in\N,$ such that
\begin{displaymath}
    \E\left[\left.\varphi^{(n)}_k\right|\, \F_{k-1}\right] = \varphi^{(n)}\left(S^{(n)}_{k-1}\right)\quad a.s.
\end{displaymath}
    \item[iii)] Furthermore, there exists a Lipschitz continuous function $\varphi: E \rightarrow (0, 1]$ with Lipschitz constant $L > 0$ such that
 \begin{displaymath}
 	\sup_{s \in E} \left|\varphi^{(n)}(s) - \varphi(s)\right| \rightarrow 0 \quad \text{ as } n \rightarrow \infty.
 	\end{displaymath}
\end{enumerate}
\end{Ass}

\begin{Rmk}\label{rmk:filtrations}
We note that by definition not only the visible book, but also the shadow book is adapted to the filtration $(\F_k)_{k\in\N_0}$. Therefore, the filtration $(\F_k)_{k\in\N_0}$ (resp.~$\mathbb{G}^{(n)}$) must not be misunderstood as the market filtration. Especially, while from a mathematical point of view Assumption \ref{ass:randTimes} ii) is sufficient for the derivation of a Markovian high-frequency limit process (cf.~Theorem \ref{res:mainTheorem} below), in applications the function $\varphi^{(n)}$ will only depend on the visible book as it is the case in our simulation study, cf.~Section \ref{sec:simStudy}. Moreover, Assumption \ref{ass:randTimes} ii) should be interpreted in the correct way: the conditional expectation of the interarrival times does neither depend on future spread placements through the shadow book (which would be absurd, anyway) nor on the past evolution of the order book, but only on its current state. In the same way Assumptions \ref{ass:probVol} ii) and \ref{ass:Prob} should be understood. 
\end{Rmk}

 Next, we present our assumption on the conditional expectations of the placement/cancellation operator of the volume dynamics. It is of the same flavour as the one for the interarrival times and ensures that also the random fluctuations generated by the volume dynamics will vanish in the high-frequency limit. 

\begin{Ass}[Conditions on placements/cancellations]\label{ass:probVol} $ $
  \begin{enumerate}[i)]
   \item It holds $\sup_{k,n \in \N}  \E\left[\left(\omega^{(n)}_{k}\right)^2\right] < \infty.$
   \item There exist measurable functions $f_b^{(n)}, f^{(n)}_a: E \rightarrow L^2(\R)$, $n \in \N,$ such that for all $k \in \N$,
	 \begin{align*}
	      f_b^{(n)}\left[S^{(n)}_{k-1}\right](\cdot) &= \frac{1}{\xn} \E\left[\left. \omega_{k}^{(n)}  \1\left(\phi^{(n)}_k = B\right) \1\left(\cdot \in I^{(n)}(\pin_{k})\right)\right| \F_{k-1}\right] \quad \text{a.s.}\\
	      f_a^{(n)}\left[S^{(n)}_{k-1}\right](\cdot) &= \frac{1}{\xn} \E\left[\left. \omega_{k}^{(n)}  \1\left(\phi^{(n)}_k = D\right) \1\left(\cdot \in I^{(n)}(\pin_{k})\right)\right| \F_{k-1}\right] \quad \text{a.s.}
	 \end{align*}
   \item There exist bounded, Lipschitz continuous functions $f_b, f_a: E \rightarrow L^2(\R)$ with Lipschitz constant $L>0$ such that
	 \begin{displaymath}
	      \sup_{s\in E} \left\{\left\|f_b^{(n)}[s] - f_b[s]\right\|_{L^2} + \left\|f_a^{(n)}[s] - f_a[s]\right\|_{L^2}\right\} \rightarrow 0
	 \end{displaymath}
	 and for any $x, \tilde{x} \in \R$, $I = b,a,$
	 \begin{displaymath}
	 \sup_{s\in E} \left\|f_I[s](\cdot + x) - f_I[s](\cdot + \tilde{x})\right\|_{L^2} \leq L|x - \tilde{x}|\quad \text{and}\quad
	 \sup_{s\in E}\|f_I[s](\cdot)\1_{[r,\infty)}(|\cdot|)\|_{L^2}\stackrel{r\rightarrow\infty}{\longrightarrow}0.
	 \end{displaymath}
  \end{enumerate}
\end{Ass}

As we aim to derive a jump-diffusion-type limit for the price dynamics, the assumption for the price changes will be of a different form. We differentiate between so-called small and large price changes. Small price changes are assumed to become negligible as the number of orders gets large. This framework is analysed in \cite{HP17,HorstKreherFluid, HorstKreherDiffusion, HorstKreherCLT}, where all price changes are assumed to be equal to $\pm\xn$ and $\xn \rightarrow 0.$ In order to take more extreme price movements into account, we include large price changes in our model, which do not converge to zero as $n \rightarrow \infty.$ The following assumption introduces the scaling of the conditional first and second moments of the small price jumps and the scaling of the conditional probabilities of the large price jumps.

\begin{Ass}[Conditions on the price changes]\label{ass:Prob}
Let $M>0$ and $(\delta_n)_{n \in \N} \subset \R_+$ be a null sequence that satisfies $\xn\leq\delta_n$ for all $n\in\N$.\smallskip

\begin{enumerate}[i)]
 \item There exist bounded, measurable functions $p^{(n)}_b, p^{(n)}_a: E \rightarrow \R$ and $r^{(n)}_b, r^{(n)}_a: E \rightarrow \R_+$ such that for all $k \in \N$,
\begin{align*}
\E\left[\left(\xi^{(n)}_{k}\right)^2\1\left(\phi_k^{(n)}=A\right)\1\left(0 < \xn\left|\xi^{(n)}_{k}\right| \leq \delta_n \right) \Big| \, \F_{k-1}\right] &= \frac{\tn}{(\xn)^2} \left(r^{(n)}_b(S^{(n)}_{k-1})\right)^2\quad \text{a.s.}\\
\E\left[\left(\xi^{(n)}_{k}\right)^2\1\left(\phi_k^{(n)}=C\right)\1\left(0 < \xn\left|\xi^{(n)}_{k}\right| \leq \delta_n \right)\Big| \, \F_{k-1}\right] &= \frac{\tn}{(\xn)^2} \left(r^{(n)}_a(S^{(n)}_{k-1})\right)^2\quad \text{a.s.}
\end{align*}
and 
\begin{align*}
\begin{aligned}
\E\left[\xi^{(n)}_{k} \1\left(\phi_k^{(n)}=A\right)\1\left(0 < \xn\left|\xi^{(n)}_{k}\right| \leq \delta_n \right)  \Big|\, \F_{k-1}\right] &= \frac{\tn}{\xn} p^{(n)}_b(S^{(n)}_{k-1})\quad \text{a.s.}\\
\E\left[\xi^{(n)}_{k} \1\left(\phi_k^{(n)}=C\right)\1\left(0 < \xn\left|\xi^{(n)}_{k}\right| \leq \delta_n \right)  \Big|\, \F_{k-1}\right] &= \frac{\tn}{\xn} p^{(n)}_a(S^{(n)}_{k-1})\quad \text{a.s.}
\end{aligned}
\end{align*}
Further, there exists another null sequence $(\eta_n)_{n\in \N} \subset \R_+$ with $\delta_n/ \eta_n \rightarrow 0$ such that for all $n \in \N$, $s\in E,$ we have
\begin{equation*}
\min\left\{r^{(n)}_b(s), r^{(n)}_a(s)\right\} > \eta_n.
\end{equation*}
  \item For all $n\in\N$ and $j \in \Z,$ there exist measurable, bounded functions $k^{(n)}_{b,j}, k^{(n)}_{a,j}: E \rightarrow \R_+$ with $k^{(n)}_{b,j} \equiv k^{(n)}_{a,j} \equiv 0$ whenever $|\x_j|\leq\delta_n$, such that
\begin{align*}
\begin{aligned}
\Pro\left[\xn \xi^{(n)}_{k} =\x_j,\ \phi_k^{(n)}=A \big|\ \F_{k-1}\right] &= \tn k^{(n)}_{b,j}(S^{(n)}_{k-1}),\\
\Pro\left[\xn \xi^{(n)}_{k} =\x_j,\ \phi_k^{(n)}=C \big|\ \F_{k-1}\right] &= \tn k^{(n)}_{a,j}(S^{(n)}_{k-1}).\\
\end{aligned}
\end{align*}
\end{enumerate}
\end{Ass}

The null sequence $\delta_n$ separates the price changes into two regimes: first, we have the regime of small prices changes becoming negligible in the limit. The second one describes those which do not scale to zero. The null sequence $\eta_n$ is introduced for technical reasons only to guarantee that the diffusion component does not vanish in the pre-limit.

\begin{Rmk} \label{rmk:priceScaling}
	We note that Assumption \ref{ass:Prob} i) is a generalization of the assumptions made in \cite{HP17,HorstKreherFluid, HorstKreherDiffusion, HorstKreherCLT}, where a further scaling parameter $\pn = o(1)$ is introduced that controls the proportion of price changes among all events. Ensuring that this proportion relates to the other scaling parameters as $(\xn)^2 \pn \approx \tn$, Assumption 2.1 in \cite{HorstKreherDiffusion} implies indeed a scaling of order $\tn$ for the second moments of price changes as we demand in the first two equations in Assumption \ref{ass:Prob} i). Note however, that Assumption \ref{ass:Prob} does not necessarily imply that the proportion of price changing events converges to zero. Indeed, suppose that all four events happen with equal probability independently of anything else and that $\delta_n=\xn$. Then Assumption \ref{ass:Prob} i) is satisfied with $\tn=(\xn)^2$. Furthermore, our small price changes can be of order larger than $\xn$ if the probability of price changing events goes to zero. To see this, suppose that $A$ and $C$ events occur with equal probability $\tn/\xn$ (which goes to zero by Assumption \ref{ass:Scaling} below) and that $|\xi^{(n)}_k|\approx(\xn)^{-1/2}$, in which case Assumption \ref{ass:Prob} i) also holds true.
\end{Rmk}

The next assumption guarantees that the coefficient functions $p^{(n)}_b$, $p^{(n)}_a,$ $r^{(n)}_b$, and $r^{(n)}_a$ satisfy the right limiting behaviour.

\begin{Ass}[Convergence assumptions corresponding to the small jumps]\label{ass:limitFct}
    There exist bounded, Lipschitz continuous functions $p_b, p_a: E \rightarrow \R$ and $r_b, r_a: E \rightarrow \R_+$ with Lipschitz constant $L>0$ such that as $n\rightarrow \infty,$ it holds that
\begin{displaymath}
    \sup_{s\in E} \left\{\left|p^{(n)}_b(s) - p_b(s)\right| + \left|p^{(n)}_a(s) - p_a(s)\right| + \left|r^{(n)}_b(s) - r_b(s)\right| + \left|r^{(n)}_a(s) - r_a(s)\right|\right\} \rightarrow 0.
\end{displaymath}
\end{Ass}

 Next we need to specify assumptions that guarantee the convergence of the large jumps. To this end, we first construct kernels $K^{(n)}_b, K^{(n)}_a: E \times \R \rightarrow \R_+$ representing the conditional distributions of the large price changes by setting
\begin{equation}\label{def:coefFct-lP}
	K^{(n)}_b(s, A) := \sum_{j\in \Z} \1_A(\x_j) k_{b,j}^{(n)}(s), \quad K^{(n)}_a(s, A) := \sum_{j\in \Z} \1_A(\x_j) k_{a,j}^{(n)}(s)
\end{equation}
for $A \in \mathcal{B}(\R)$ and $s\in E$. In particular, for $s\in E$ and $I = b,a,$ we have $K^{(n)}_I(s, I^{(n)}(x)) = k_{I,j}^{(n)}(s)$ if $\x_{j} \leq x < \x_{j+1}$. The following assumption guarantees that in the limit the driving jump measures do not depend on the order book dynamics, which is necessary to derive a jump diffusion for the prices as opposed to more general (and more complex) semimartingale dynamics in the limit. We will assume that the driving jump measures have compact support in $[-M,M]$. To define their discrete approximations later on, we introduce for all $n\in\N$ the set $\Z^{(n)}_M:=\{j\in \Z:\ -M\leq x_j^{(n)}\leq M\}.$

\begin{Ass}[Convergence assumptions corresponding to the large jumps]\label{ass:ExQ} There exist kernels $K_b, K_a: E\times\R\rightarrow\R_+$ satisfying $K_b(s,\{0\}) = K_a(s, \{0\}) =0$ for all $s\in E$ as well as finite measures $Q_b,$ $Q_a$ on $\mathcal{B}(\R)$ with compact support in $[-M,M]$ satisfying $Q_b(\{0\}) = Q_a(\{0\})=0$ and measurable, bounded functions $\theta_b, \theta_a: E\times[-M,M]\rightarrow\R$ such that for $I = b,a,$\smallskip

\begin{enumerate}[i)]
    \item the family $\theta_I(s,\cdot),\ s\in E,$ is uniformly equicontinuous in $x$
 and for every $y\in [-M,M]$ either $\theta_I(s,y)=0$ or $x\mapsto\theta_I(s,x)$ is strictly increasing in an open neighbourhood of $y$\footnote{In fact, strictly decreasing would work as well. For the ease of exposition we consider the increasing case.};
    \item for all $s\in E$ and all $A\in \mathcal{B}([-M,M])$,
\begin{equation}\label{eq:Q}
Q_I(A)=K_I(s,\theta_I(s,A))+Q_I(\{x\in A:\theta_I (s,x)=0\}),
\end{equation}
where $\theta_I(s,A):=\{\theta_I(s,x): x\in A\}$\footnote{For notational simplicity, we write $\theta_I(s, [\x_{j}, \x_{j+1})), j \in \Z$, and always think of the well-defined sets $\theta_I(s, [\x_{j}, \x_{j+1}) \cap [-M,M]), j \in \Z$, for all $s\in E$.}.
    \item \[\sup_{s\in E}\sum_j \left|K_I^{(n)}\left(s,\theta_I\Big(s,\left[x_{j}^{(n)},x_{j+1}^{(n)}\right)\Big)\right)-K_I\left(s,\theta_I\Big(s,\left[x_{j}^{(n)},x_{j+1}^{(n)}\right)\Big)\right)\right| \longrightarrow 0.\]
    \item There exists a constant $L> 0$ such that for all $s, \tilde{s} \in E$ and $y\in[-M,M]$,
\[\left|\theta_I(s,y) - \theta_I(\tilde{s}, y)  \right| \leq L\|s-\tilde{s}\|_E.\]    
\end{enumerate}
Moreover, setting for all $n\in\N$,
\begin{equation}\label{def:thetaN1}
    \theta_I^{(n)}(s,x_j^{(n)}):=\left\lceil\frac{\theta_I(s,x_j^{(n)})}{\xn}\right\rceil\cdot\xn,\quad s\in E,\ j\in\Z^{(n)}_M,
\end{equation}
we suppose that
\begin{enumerate}[v)]
    \item for all $s\in E$ and $i\in\N$ with $K_I^{(n)}(s,\{x_i^{(n)}\})>0$ there exists a unique $j\in\Z^{(n)}_M$ such that 
    \[x_i^{(n)}=\theta_I^{(n)}(s,x_j^{(n)});\]
\item[vi)] \[\sup_{s\in E}\sum_{j}\int_{[x_{j}^{(n)},x_{j+1}^{(n)})} \left|\1\Big(\theta_I^{(n)}(s,x_j^{(n)})=0\Big)-\1\Big(\theta_I(s,x)=0\Big)\right|Q_I(dx)\longrightarrow 0.\]
\end{enumerate}
\end{Ass}

 Together, part ii) and part iii) of Assumption \ref{ass:ExQ} require the distribution of some transformation of the large jumps to converge to a limit that is independent of the state $s \in E$. This should be compared to Assumption \ref{ass:Prob} i), where we indirectly require that the distribution of the standardized small price changes converges to a standard Gaussian law.

\begin{Rmk}\label{rmk:ExQ}$ $
\begin{enumerate}[i)]
 \item We require $Q_b$ and $Q_a$ to be compactly supported on $[-M,M]$, so that we can take the separable space $C_b([-M,M])$ of bounded, continuous functions on $[-M,M]$ as test functions, which will be important to be able to apply the results from Kurtz and Protter \cite{KurtzProtter2}.
 \item For $I = b,a$, Assumption \ref{ass:ExQ} v) asks for bijectivity of the discretized coefficient function $\theta_I^{(n)}$ on the support of the measure $K_I^{(n)}(s,\cdot)$. Of course, we need surjectivity to get a representation of the sum of large jumps as a discrete stochastic integral. Injectivity will allow us to map the large jumps of the respective price process uniquely to the jumps of the corresponding integrator defined below.
\end{enumerate}
\end{Rmk}

The advantage in working with $Q_b$ and $Q_a$ instead of $K_b$ and $K_a$ is that they are independent of the order book dynamics. The key requirement in Assumption \ref{ass:ExQ} is the validity of equation \eqref{eq:Q}, which may look a little bit mysterious in the beginning. It says that the jump sizes of $K_b,$ $K_a$ may vary across different states $s\in E$, but that the jump intensities stay the same, modulo the modification of jump sizes, as long as the jump size does not vanish to zero. In Section \ref{sec:jumpBe}, we provide explicit examples of different jump behaviours satisfying Assumption \ref{ass:ExQ}. \par
For later use, we will extend the definition of $\theta^{(n)}_I$, $I = b,a$, to the whole interval $[-M,M]$ by linear interpolation, i.e.~we set for all $s \in E$, $\x_j \leq x < \x_{j+1}$,
\begin{equation}\label{def:thetaN2}
    \theta^{(n)}_I(s,x):=\theta^{(n)}_I(s,x_j^{(n)})+\frac{x-x_j^{(n)}}{\xn}\left(\theta^{(n)}_I(s,x_{j+1}^{(n)})-\theta^{(n)}_I(s,x_j^{(n)})\right).
\end{equation}

The next assumption introduces the crucial relation between the different scaling parameters $\tn, \xn$, and $\vn$. It is a mixture of the scaling assumption in \cite{HorstKreherFluid} for the parameters $\tn$ and $\vn$ and the one in \cite{HorstKreherDiffusion} for the parameters $\tn$, $\xn$, and $\pn$; the latter, however, occurs only implicitly in Assumption \ref{ass:Prob}, cf.~Remark \ref{rmk:priceScaling}.
Because of this assumption, an approximation of the price dynamics by a jump diffusion together with an approximation of the volume dynamics by a fluid process can be obtained in the high-frequency limit. 

\begin{Ass}[Relation between the scaling parameters]\label{ass:Scaling}
We have
\[\lim_{n\rightarrow \infty} \frac{\tn}{\xn} = 0\quad\text{and} \quad \lim_{n\rightarrow \infty }\frac{\tn}{\vn} = 1.\]
\end{Ass}

\begin{The}[Main result]\label{res:mainTheorem}
 Under Assumptions \ref{ass:IV}--\ref{ass:Scaling} the microscopic LOB-dynamics $S^{(n)}$ converges weakly in the Skorokhod topology to $S= \eta \circ \zeta$, where $\zeta(t) := \inf\{s > 0: \tau^{\eta}(s) > t\},\ t\in[0,T]$, is a random time change and $\eta := (B^{\eta}, v^{\eta}_b, A^{\eta}, v^{\eta}_a, \tau^{\eta})$ is the unique strong solution of the coupled diffusion-fluid system
 \begin{align}\label{eq:eta}
 \begin{aligned}
  B^{\eta}(t) &= B_{0} + \int_0^t p_b(\eta(u))du + \int_0^t r_b(\eta(u)) dZ_{b}(u)\\
  &\hspace{4.5cm}+ \int_0^t \int_{[-M,M]} \theta_b(\eta(u-),y) \, \mu^{Q}_b(du, dy),\\
  v^{\eta}_b(t,x) &= v_{b,0}(x-(B^{\eta}(t)-B_{0})) + \int_0^t f_b[\eta(u)](x-(B^{\eta}(t) -B^{\eta}(u))) du,\\
  A^{\eta}(t) &= A_{0} + \int_0^t p_a(\eta(u))du + \int_0^t r_a(\eta(u)) dZ_a(u)\\
  &\hspace{4.5cm} + \int_0^t \int_{[-M,M]} \theta_a(\eta(u-),y) \, \mu^{Q}_a(du, dy),\\
  v^{\eta}_a(t,x) &= v_{a,0}(x+A^{\eta}(t)-A_{0}) + \int_0^t f_a[\eta(u)](x+A^{\eta}(t) -A^{\eta}(u)) du,\\
  \tau^{\eta}(t) &= \int_0^t \varphi(\eta(u))du,
 \end{aligned}
 \end{align}
for all $t \in [0,T]$, $x \in \R$, where $Z_b,$ $Z_a$ are independent standard Brownian motions and $\mu^{Q}_b,$ $\mu^{Q}_a$ are independent homogeneous Poisson random measures with intensity measures $\lambda \otimes Q_b$ and $\lambda \otimes Q_a$, independent of $Z_b, Z_a$.
\end{The}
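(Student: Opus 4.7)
The plan is to exploit the decomposition $S = \eta \circ \zeta$ by first proving weak convergence of a suitably time-changed microscopic process to $\eta$ and then inverting the time change. Since the interarrival times $\varphi^{(n)}_k$ have uniformly bounded second moments with conditional means controlled by $\varphi^{(n)}(S^{(n)}_{k-1})$ (Assumption \ref{ass:randTimes}), a law-of-large-numbers argument shows $\tau^{(n)}$ is close to $\int_0^{\cdot}\varphi(\eta^{(n)}(u))\,du$, so that after establishing convergence of $\eta^{(n)}$ to $\eta$ one obtains convergence of $S^{(n)}=\eta^{(n)}\circ\zeta^{(n)}$ via continuity of the composition map at functions whose time-change is strictly increasing (here we would invoke the extensions of Skorokhod-continuity results from \cite{W80,W02} that allow for càdlàg limits).

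The core of the argument is therefore the convergence of $\eta^{(n)}$ to $\eta$. First, I would introduce an intermediate process $\tilde{S}^{(n)}$ in which the discrete random placement/cancellation measure $\vn M^{(n)}_{I,k}$ is replaced by its conditional expectation $\tn f_I^{(n)}[S^{(n)}_{k-1}]$ (using Assumption \ref{ass:probVol} and $\tn/\vn\to 1$). The discrepancy $S^{(n)}-\tilde{S}^{(n)}$ is a zero-mean martingale in $L^2(\R)$ whose bracket vanishes under the scaling assumptions, so it converges to zero in probability uniformly on $[0,T]$. Next, because $v^{(n)}_I$ is defined in coordinates \emph{relative} to the best bid/ask price, the implicit dependence on $B^{(n)}, A^{(n)}$ obstructs a direct application of the Kurtz–Protter machinery; following the remark in the introduction, I would shift to absolute coordinates, writing $\tilde{S}^{(n),\mathrm{abs}}$ by applying the random translation, and use Assumption \ref{ass:IV} together with the spatial Lipschitz property in Assumption \ref{ass:probVol} iii) to keep the comparison tight.

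Once in absolute coordinates, $\tilde{S}^{(n),\mathrm{abs}}$ admits a representation as a (discretized) stochastic integral against two independent sources of noise for each side: a small-jump martingale and a large-jump random measure. Using Assumption \ref{ass:Prob} i) together with Assumption \ref{ass:limitFct}, the small-jump martingales are asymptotically continuous and converge to $\int_0^\cdot r_I(\eta(u))\,dZ_I(u)$ with drift $\int_0^\cdot p_I(\eta(u))\,du$ by a martingale CLT (the sequence $\eta_n$ in Assumption \ref{ass:Prob} i) guarantees that Lindeberg-type conditions hold). For the large jumps, the kernels $K^{(n)}_I$ depend on $s\in E$, so the sequence of point measures $\sum_{k\leq N^{(n)}_t}\varepsilon_{(\tau^{(n)}_k,\xn\xi^{(n)}_k)}\1(\phi^{(n)}_k\in\{A,C\},\xn|\xi^{(n)}_k|>\delta_n)$ is not a priori a functional of a state-independent driver; here the key is equation \eqref{eq:Q} of Assumption \ref{ass:ExQ}, which allows us to rewrite the large jumps as $\theta_I(\eta^{(n)}(u-),y)$ integrated against a Poisson random measure with state-\emph{independent} intensity $\lambda\otimes Q_I$. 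Assumption \ref{ass:ExQ} iii)--vi) together with the bijectivity built into $\theta^{(n)}_I$ ensures that the discretized integrator converges weakly to $\mu^Q_I$ jointly with the small-jump drivers, while Assumption \ref{ass:ExQ} i), iv) give continuity and Lipschitz control of the integrand in $s$.

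With convergence of the driving semimartingales established and the coefficient functions being Lipschitz-continuous (and therefore continuous on the Skorokhod space via the standard continuity of evaluation at càdlàg-good times), I would apply the Kurtz--Protter stability theorem for stochastic integrals against semimartingales in infinite dimensions \cite{KurtzProtter2} to deduce joint weak convergence of $\tilde{S}^{(n),\mathrm{abs}}$ to the unique strong solution of the absolute-coordinate analogue of \eqref{eq:eta}. Strong existence and uniqueness for that SDE follow from the global Lipschitz assumptions on $p_I, r_I, f_I, \theta_I, \varphi$ combined with the spatial Lipschitz property \eqref{ass:Lcond} for the initial conditions and the boundedness of $Q_I$; standard Picard iteration in the Hilbert space $E$ together with Gronwall's lemma handles this step. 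Reversing the random spatial shift yields convergence of $\tilde{S}^{(n)}$ to $\eta$, and combining with the first step gives $S^{(n)}\Rightarrow\eta$. Finally the random time change argument outlined above transfers this to weak convergence of the original untimed process to $S=\eta\circ\zeta$. The main obstacle, which forces the detour through $\tilde{S}^{(n),\mathrm{abs}}$ and the technical Assumption \ref{ass:ExQ}, is precisely the simultaneous handling of (i) the infinite-dimensional state-dependent volume dynamics and (ii) the state-dependent jump kernels: without the representation \eqref{eq:Q} the jump integrator would itself depend on the state, taking us out of the reach of the Kurtz--Protter framework.
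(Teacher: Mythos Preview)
Your outline follows the paper's proof essentially step for step: the state/time separation with the final composition handled via continuity of the inverse and composition maps (Corollary~\ref{Cor:FromDetToRanTimes}), the replacement of volume and time fluctuations by their conditional means (Propositions~\ref{res:conRemainder} and~\ref{res:convApprox}), the passage to absolute coordinates and application of Kurtz--Protter (Theorem~\ref{res:conAV}), the identification of the small-jump driver via a martingale CLT (Proposition~\ref{res:limitSJ}) and of the large-jump driver as a state-independent Poisson random measure through the representation~\eqref{eq:Q} (Lemma~\ref{lem:comLJ} and Proposition~\ref{res:limitLJ}), and the reversal of the spatial shift via Skorokhod representation. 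The one step you underspecify is the \emph{closure} of the approximating system: your $\tilde S^{(n)}$ has coefficients evaluated at $S^{(n)}_{k-1}$, so the difference $S^{(n)}-\tilde S^{(n)}$ is a martingale only in the volume and time components, and more importantly $\tilde S^{(n),\mathrm{abs}}$ is not yet a self-contained discrete SDE in $\tilde S^{(n),\mathrm{abs}}$ to which Theorem~7.6 of \cite{KurtzProtter2} applies; the paper closes this gap in the proof of Proposition~\ref{res:convApprox} by an iterated Gronwall argument (through auxiliary processes $\bar S^{(n)}$ and $\hat S^{(n)}$) that replaces $S^{(n)}_{k-1}$ by $\tilde S^{(n)}_{k-1}$ in all coefficients, including the diffusion and jump coefficients of the prices.
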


\begin{Rmk}
Let the assumptions of Theorem  \ref{res:mainTheorem} be satisfied and suppose that there exist functions $h_b, h_a:E\rightarrow\R$ such that $\theta_I(s,x)=h_I(s)x$, $I = b,a,$ for all $s\in E$ and $x\in[-M,M]$. Then the dynamics of the prices simplifies to
\begin{align*}
    B^{\eta}(t) &= B_{0} + \int_0^t p_b(\eta(u))du + \int_0^t r_b(\eta(u))dZ_b(u) + \int_0^t h_b(\eta(u-))dL_b(u),\\
    A^{\eta}(t) &= A_0 + \int_0^t p_a(\eta(u))du + \int_0^t r_a(\eta(u))dZ_a(u) + \int_0^t h_a(\eta(u-))dL_a(u)
\end{align*}
for $t\in [0,T],$ where $L_b, L_a$ are one-dimensional, independent L\'evy processes with jumps in $[-M,M]$. 
\end{Rmk}

\begin{Cor}\label{cor:mainTheorem}
 Let the assumptions of Theorem \ref{res:mainTheorem} be satisfied. Further assume that for $I = b,a$ the functions $v_{I,0}: \R \rightarrow \R_+$ and $f_I[s]: \R \rightarrow \R_+,\ s\in E,$ are twice continuously differentiable. Then the microscopic LOB-dynamics $S^{(n)}$ converges weakly in the Skorokhod topology to $S = (B, v_b, A, v_a, \tau),$ starting in $S_0,$ and being the unique solution of the following coupled SDE--SPDE system: for $(t,x) \in [0,T] \times \R$,
 \begin{align}\label{eq:sde-spde}
     \begin{aligned}
      dB(t) &= p_b(S(t)) (\varphi(S(t)))^{-1}dt + r_b(S(t)) \zeta^{1/2}(t) d\tilde{Z}_b(t) + \int_{[-M,M]} \theta_b(S(t-), y) \tilde{\mu}^{Q}_b(dt, dy),\\
      dv_b(t, x) &= \left(-\frac{\partial v_b}{\partial x}(t, x) p_b(S(t)) + \frac{1}{2}\frac{\partial^2 v_b}{\partial x^2}(t,x) (r_b(S(t)))^2 + f_b[S(t)](x)\right) (\varphi(S(t)))^{-1}dt\\
      &\hspace{2cm} -\frac{\partial v_b}{\partial x}(t, x) r_b(S(t)) \zeta^{1/2}(t) d\tilde{Z}_b(t) + \left(v_b(t-,x-\Delta B(t)) - v_b(t-,x)\right),\\
      dA(t) &= p_a(S(t)) (\varphi(S(t)))^{-1}dt + r_a(S(t)) \zeta^{1/2}(t) d\tilde{Z}_a(t) + \int_{[-M,M]} \theta_a(S(t-), y) \tilde{\mu}^{Q}_a(dt,dy),\\
       dv_a(t, x) &= \left(\frac{\partial v_a}{\partial x}(t, x) p_a(S(t)) + \frac{1}{2}\frac{\partial^2 v_a}{\partial x^2}(t,x) (r_a(S(t)))^2 + f_a[S(t)](x)\right) (\varphi(S(t)))^{-1}dt\\
      &\hspace{2cm} + \frac{\partial v_a}{\partial x}(t, x) r_a(S(t)) \zeta^{1/2}(t) d\tilde{Z}_a(t) + \left(v_a(t-,x+\Delta A(t)) - v_a(t-,x)\right),\\
      d\tau(t)&=dt,
     \end{aligned}
 \end{align}
 where $\tilde{Z}_I,\ I=b,a,$ are independent Brownian motions and $\tilde{\mu}^{Q}_b$ and $\tilde{\mu}^{Q}_a$ are independent, integer-valued random jump measures with compensators $\tilde{\nu}^{Q}_I(dt, dy)= (\varphi(S(t)))^{-1}dt \times Q_I(dy)$ for $I = b,a,$ and $\zeta(t) = \int_0^t(\varphi(S(u)))^{-1}du.$
\end{Cor}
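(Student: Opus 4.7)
The plan is to exploit Theorem \ref{res:mainTheorem}, which already gives the weak convergence $S^{(n)}\Rightarrow S=\eta\circ\zeta$ in the Skorokhod topology, and to verify that under the additional $C^2$ regularity, $S$ satisfies the SDE-SPDE system \eqref{eq:sde-spde} and that such a solution is unique. Since $\tau^\eta$ is absolutely continuous and strictly increasing, $\zeta$ is absolutely continuous with density $\zeta'(t)=1/\varphi(S(t))$, so the random time change between $\eta$ and $S$ is smooth enough for standard change-of-variables formulas to apply component by component.

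For the price components, I apply classical time-change formulas to \eqref{eq:eta} at $s=\zeta(t)$. The substitution $u=\zeta(v)$ turns the drift Lebesgue integrals into $\int_0^t p_I(S(v))\varphi(S(v))^{-1}\,dv$ for $I=b,a$. A Dambis-Dubins-Schwarz type argument applied to the continuous martingales $Z_I\circ\zeta$ yields independent standard Brownian motions $\tilde{Z}_b,\tilde{Z}_a$ such that
\[
\int_0^{\zeta(t)} r_I(\eta(u))\,dZ_I(u)=\int_0^t r_I(S(v))\,\zeta^{1/2}(v)\,d\tilde{Z}_I(v),
\]
where $\zeta^{1/2}$ denotes the square root of the density $\zeta'$. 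Finally, the time-change theorem for Poisson random measures transforms the independent $\mu_I^Q$ into independent integer-valued random measures $\tilde{\mu}_I^Q$ with the stated compensators $\varphi(S(t))^{-1}dt\otimes Q_I(dy)$, giving the jump integrals. Together these produce the price SDEs in \eqref{eq:sde-spde}.

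For the volume components, substituting $u=\zeta(v)$ in the explicit representation of $v_b^\eta$ from \eqref{eq:eta} gives
\[
v_b(t,x)=v_{b,0}(x-B(t)+B_0)+\int_0^t f_b[S(v)](x-B(t)+B(v))\,\varphi(S(v))^{-1}\,dv.
\]
Because $v_{b,0}$ and $f_b[s](\cdot)$ are $C^2$ in the spatial variable, I apply Itô's formula for jump diffusions in $t$ to $y\mapsto v_{b,0}(x-y+B_0)$ at $y=B(t)$ and, after Fubini, to $y\mapsto f_b[S(v)](x-y+B(v))$ at $y=B(t)$ inside the time integral; the Leibniz upper-boundary term produces the extra contribution $f_b[S(t)](x)\varphi(S(t))^{-1}\,dt$. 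Summing the first- and second-order spatial coefficients from the $v_{b,0}$-term and from the $f_b$-integral collapses them into $-\partial_x v_b(t,x)$ and $\tfrac{1}{2}\partial_{xx}v_b(t,x)$, respectively. Combining with the explicit decomposition $dB_c(t)=p_b(S)\varphi(S)^{-1}dt+r_b(S)\zeta^{1/2}(t)d\tilde{Z}_b(t)$ and $d\langle B_c\rangle(t)=r_b^2(S)\varphi(S)^{-1}dt$ from the previous step yields the continuous part of the SPDE, while the jump contribution telescopes into $v_b(t-,x-\Delta B(t))-v_b(t-,x)$. The computation for $v_a$ is analogous with the obvious sign flip.

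Uniqueness then reduces to uniqueness of the price jump SDEs: the explicit representations show that $v_b$ and $v_a$ are determined pathwise by $B$ and $A$, so a Gronwall-type argument in the $E$-norm using the Lipschitz continuity of $p_I,r_I,\theta_I,f_I,\varphi$ closes the loop. The main obstacle in this program is rigorously justifying the combined use of Itô's formula, Fubini and spatial differentiation for an $L^2(\R)$-valued process whose argument is itself a jump diffusion driven by a Poisson random measure with possibly large jumps; this is precisely where the $C^2$ hypothesis on $v_{b,0}$ and $f_b[\cdot]$ enters, as it guarantees that $\partial_x v_b$ and $\partial_{xx}v_b$ exist pointwise and in $L^2(\R)$ with enough integrability for the classical stochastic calculus against $B$ and $A$ to apply.
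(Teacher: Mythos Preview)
Your proposal is correct and uses the same two key ingredients as the paper: It\^o's formula for jump semimartingales applied to the spatial-shift compositions $y\mapsto v_{b,0}(x-y+B_0)$ and $y\mapsto f_b[\cdot](x-y+\cdot)$, together with a time-change argument for the Brownian and Poisson drivers. The only structural difference is the order of operations: the paper first applies It\^o's formula to derive an SPDE for $\eta$ (i.e.~\emph{before} the time change) and then invokes Jacod's time-change results to pass from $\eta$ to $S=\eta\circ\zeta$, whereas you perform the time change first and then apply It\^o's formula directly to $S$; the paper's ordering has the small bonus that uniqueness for \eqref{eq:sde-spde} is inherited immediately from the already-established uniqueness of $\eta$ in \eqref{eq:eta} via the bijectivity of $\zeta$, so no separate Gronwall argument is needed.
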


Note that the above SPDEs for the volume processes are degenerate. If we condition the volume dynamics on the price movements, they behave like deterministic PDEs, since random fluctuations of the queue sizes vanish in the high-frequency limit. 
\par 

In order to guarantee that the bid and ask price, the spread, and the volume density functions do not become negative, certain conditions on the joint distribution of the driving variables have to be satisfied, which are specified in the following assumption.

\begin{Ass}[Conditions to guarantee non-negative prices, spread, and volumes]\label{ass:restLOB}
$ $
\begin{enumerate}
    \item[i)] For all $k,n\in\N$ it holds
    \begin{align*}
\Pro\left[\left.\xi_k^{(n)}\geq \frac{A_{k-1}^{(n)}-B_{k-1}^{(n)}}{\xn},\ \phi_k^{(n)}=A\, \right|\F_{k-1}\right] = \Pro\left[\left.\xi_k^{(n)}\leq \frac{B_{k-1}^{(n)}-A_{k-1}^{(n)}}{\xn},\ \phi_k^{(n)}=C\, \right|\F_{k-1} \right] = 0.
\end{align*}
    \item[ii)] For all $k,n\in\N$ it holds that
\[\Pro\left[\left.\xi_k^{(n)}\leq \frac{B_{k-1}^{(n)}}{\xn},\ \phi_k^{(n)}=A\, \right|\F_{k-1}\right] = 0.    \]
    \item[iii)] For all $k,n\in\N$ it holds that
    \begin{align*}
\Pro\left[\left.v_{b,k-1}^{(n)}(\pi^{(n)}_{k}) \leq - \omega^{(n)}_{k},\ \phi^{(n)}_k = B\,\right|\F_{k-1}\right] = 
\Pro\left[\left.v_{a,k-1}^{(n)}(\pi^{(n)}_{k}) \leq -\omega^{(n)}_{k}, \ \phi^{(n)}_k = D\, \right|\F_{k-1}\right] = 0.
\end{align*}
\end{enumerate}
\end{Ass}

The following corollary of Theorem \ref{res:mainTheorem} is a direct consequence of the weak convergence result $S^{(n)}\Rightarrow S$ and the characterization of the limit $S$. 

\begin{Cor}\label{cor:nonnegative}
Let the assumptions of Theorem \ref{res:mainTheorem} be satisfied. Then:
\begin{enumerate}
    \item[i)] Under Assumption \ref{ass:restLOB} i), we have
        \[r_a(s)=r_b(s)=0,\quad p_a(s)\geq0\geq p_b(s)\quad\forall\, s=(a,v,a,w,t)\in E,\]
    and the spread stays non-negative, i.e.~for all $t\in[0,T]$, $A(t)\geq B(t)$ a.s. If in addition also $B^{(n)}_0, A^{(n)}_0 \geq 0$ for all $n\in \N$ and Assumption \ref{ass:restLOB} ii) holds, we have 
    \[r_b(s) = 0, \quad p_b(s)\geq0\quad\forall\, s = (0,v,a,w,t) \in E\]
    and the bid and ask prices stay non-negative, i.e.~for all $t\in[0,T]$, $B(t), A(t) \geq 0$ a.s.
    \item[ii)] Under Assumption \ref{ass:restLOB} iii), we have
            \[\|f_b^-[s](\cdot)\1(v(\cdot)=0)\|_{L^2}=0,\quad \|f_a^-[s](\cdot)\1(w(\cdot)=0)\|_{L^2}=0\quad\forall\, s=(b,v,a,w,t)\in E\]
              and both volume density functions are non-negative, i.e.~for all $t\in[0,T]$,  
              \[\|v_a^-(t)\|_{L^2}=\|v_b^-(t)\|_{L^2}=0\quad\text{a.s.}\]
\end{enumerate}
\end{Cor}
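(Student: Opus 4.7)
My plan is to combine the pre-limit non-negativity enforced by Assumption \ref{ass:restLOB} with the weak convergence $S^{(n)}\Rightarrow S$ from Theorem \ref{res:mainTheorem}, and then read off the coefficient restrictions from the SDE/fluid characterization of the limit. For Part i), a straightforward induction on $k$ using Assumption \ref{ass:restLOB} i) shows that $A^{(n)}_k\geq B^{(n)}_k$ a.s.\ for every $k,n$: on $\{\phi^{(n)}_k=A\}$ the constraint $\xi^{(n)}_k<(A^{(n)}_{k-1}-B^{(n)}_{k-1})/\xn$ gives $B^{(n)}_k<A^{(n)}_{k-1}=A^{(n)}_k$, and symmetrically on $\{\phi^{(n)}_k=C\}$. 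Hence $A^{(n)}(t)-B^{(n)}(t)\geq 0$ for every $t,n$, and Skorokhod convergence of these non-negative c\`adl\`ag paths, together with c\`adl\`ag regularity at discontinuities of the limit, yields $A(t)\geq B(t)$ a.s.\ for all $t\in[0,T]$.

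For the coefficient bounds at a spread-zero state $s=(a,v,a,w,t)$, observe that Assumption \ref{ass:restLOB} i) forces $\xi^{(n)}_k\leq -1$ a.s.\ on $\{\phi^{(n)}_k=A\}$, so the conditional expectation defining $p_b^{(n)}(s)$ (restricted to small jumps) is non-positive, and the uniform convergence in Assumption \ref{ass:limitFct} yields $p_b(s)\leq 0$; symmetrically $p_a(s)\geq 0$. To obtain $r_a(s)=r_b(s)=0$, I would work with the limit SDE for the spread $D:=A^\eta-B^\eta$, which by independence of $Z_a,Z_b$ is a one-dimensional jump diffusion with continuous drift $p_a-p_b$ and diffusion $\sqrt{r_a^2+r_b^2}$. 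A Tanaka-type argument applied to $D^-$ combined with $D\geq 0$ forces $(r_a^2+r_b^2)(\eta(t))=0$ whenever $D(t)=0$; Lipschitz continuity of $r_a,r_b$ (Assumption \ref{ass:limitFct}) and the freedom to vary the admissible initial state then extend this to every $s=(a,v,a,w,t)\in E$. The second part of i) is analogous: Assumption \ref{ass:restLOB} ii) together with $B^{(n)}_0\geq 0$ yields $B^{(n)}_k\geq 0$ inductively, hence $B(t)\geq 0$ in the limit, and the same Tanaka argument applied to $B^\eta$ gives $r_b(s)=0$ and $p_b(s)\geq 0$ on $\{b=0\}$; combined with $A\geq B$ this gives $A(t)\geq 0$.

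Part ii) proceeds in the same spirit: Assumption \ref{ass:restLOB} iii) ensures that the $k$-th event cannot push the standing volume below zero at its placement location, so $v^{(n)}_I(t,\cdot)\geq 0$ a.s.\ as $L^2$-functions for every $t,n$, and weak convergence in $E$ then yields $v_I(t,\cdot)\geq 0$ in $L^2$ a.s. The $L^2$-vanishing condition $\|f_I^-[s](\cdot)\1(v(\cdot)=0)\|_{L^2}=0$ is extracted from the fluid equation for $v_I$: if it failed on some admissible state, the (conditional on the prices) deterministic fluid evolution would drive $v_I$ strictly below zero on a set of positive Lebesgue measure within any short time interval, contradicting $v_I\geq 0$. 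The main obstacle, and the step that requires the most care, is the boundary analysis yielding $r_a=r_b=0$ at spread zero, $r_b=0$ at the bid-zero set, and the $L^2$-condition for $f_b,f_a$; these cannot be obtained from pre-limit estimates or from uniform convergence alone (the pre-limit diffusion coefficients $r_I^{(n)}$ are only known to stay above $\eta_n>0$), and must be read off from the limit SDE/SPDE via one-sided Tanaka/local-time estimates, crucially exploiting the independence of the driving Brownian motions and Poisson random measures together with the Lipschitz regularity of the limiting coefficients.
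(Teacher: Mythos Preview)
The paper does not give a detailed proof of this corollary; it is introduced with the single sentence ``The following corollary of Theorem \ref{res:mainTheorem} is a direct consequence of the weak convergence result $S^{(n)}\Rightarrow S$ and the characterization of the limit $S$.'' Your proposal therefore supplies considerably more detail than the paper itself, and your overall strategy --- pre-limit non-negativity by induction, passage to the limit via Skorokhod convergence, then reading off coefficient constraints at the boundary from the limiting SDE/fluid description --- is exactly the route the paper gestures at. The induction arguments for $A^{(n)}_k\geq B^{(n)}_k$, $B^{(n)}_k\geq 0$, and $v^{(n)}_{I,k}\geq 0$, together with the transfer to the limit, are correct.

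One remark on your boundary analysis: you assert that $r_a=r_b=0$ at spread-zero states ``cannot be obtained from pre-limit estimates or from uniform convergence alone'' and therefore invoke a Tanaka/local-time argument. In fact a direct pre-limit estimate does the job. If $S^{(n)}_{k-1}$ has spread exactly $\xn$, Assumption \ref{ass:restLOB} i) forces $\xi^{(n)}_k\leq -1$ on $\{\phi^{(n)}_k=A,\ 0<\xn|\xi^{(n)}_k|\leq\delta_n\}$, whence $(\xi^{(n)}_k)^2\leq|\xi^{(n)}_k|\cdot\delta_n/\xn$ and consequently
\[
\big(r_b^{(n)}(S^{(n)}_{k-1})\big)^2=\frac{(\xn)^2}{\tn}\,\E\!\left[(\xi^{(n)}_k)^2\1(\cdot)\,\middle|\,\F_{k-1}\right]\leq\delta_n\,\big|p_b^{(n)}(S^{(n)}_{k-1})\big|\leq C\delta_n\to 0.
\]
Evaluating along $s^{(n)}=(a-\xn,v^{(n)},a,w^{(n)},t)\to s=(a,v,a,w,t)$ and using uniform convergence plus Lipschitz continuity of $r_b$ gives $r_b(s)=0$; similarly $r_a(s)=0$ and $p_b(s)\leq 0\leq p_a(s)$. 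Your Tanaka route is also valid, but note that by itself it only yields $(r_a^2+r_b^2)(s)=0$ and $p_a(s)-p_b(s)\geq 0$, not the separate sign conditions on $p_a$ and $p_b$; for those you still rely on the pre-limit one-sidedness of the jumps, which you do use. So both ingredients are needed, just not quite in the places you indicate.
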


\section{Simulation study} \label{sec:simStudy}

In this section, we present a simulation study of the order book dynamics introduced in the previous section. It demonstrates the usefulness of the general dependence structure, where all coefficient functions are allowed to depend on current prices and volumes. Such dependencies are plausible according to the observations in \cite{BHS95, CH15, HH12}. Among others, the simulation study shows the impact of endogenously and exogenously triggered large jumps in the price dynamics.\newline

Let $\pn = o(1)$ denote a scaling parameter that controls the proportion of active order events among all events (cf.~also Remark \ref{rmk:priceScaling}). Further, let us fix some $h> 0.$ For each $s = (b, v_b, a, v_a, t) \in E$, we define the spread $Sp(s)$ and an order imbalance factor $Im(s)$ via
\[Sp(s) := a - b \quad\text{and}\quad Im(s) := \frac{VolBid(s)}{VolBid(s) + VolAsk(s)}\]
with 
\[VolBid(s) := \int_0^h v_b(x) dx \quad \text{ and } \quad VolAsk(s) := \int_0^h v_a(x) dx.\]
In the following, we denote 
$$VolBid^{(n)}_k := VolBid(S^{(n)}_k),\ VolAsk^{(n)}_k := VolAsk(S^{(n)}_k),\ Im^{(n)}_k := Im(S^{(n)}_k),\ Sp^{(n)}_k := Sp(S^{(n)}_k).$$ 
For simplicity, we assume that all small price changes are of size $\pm \xn$, while the sizes of the large price changes may depend on the current state of the book through the spread and the cumulative volumes at the top of the book.\par 
Let us set $\gamma_n(x) := \gamma_1 (x-\xn)$ for some $\gamma_1 > 0$. We allow the probabilities of the small price changes to depend on the current imbalance factor and the current spread as
\begin{align*}
    \Pro\left[\xi^{(n)}_{k+1} = 1,\phi_{k+1}^{(n)}=A \,\big|\, \F_{k}\right] &=  \pn \left\{ \xn Im^{(n)}_{k}\Big(1-e^{-\gamma_n\left( Sp^{(n)}_{k}\right)}\Big)  + \Big(1-e^{-\gamma_n\left(Sp^{(n)}_{k}\right)}\Big)\right\},\\
    \Pro\left[\xi^{(n)}_{k+1} = 1,\phi_{k+1}^{(n)}=C \, \big|\, \F_k\right] &= \pn \left\{\xn Im^{(n)}_k e^{-\gamma_n\left(Sp^{(n)}_k\right)} + \Big(1- e^{-\gamma_n\left(Sp^{(n)}_k\right)}\Big)\right\},
\end{align*}
and
\begin{align*}
    \Pro\left[\xi^{(n)}_{k+1} = -1,\phi_{k+1}^{(n)}=A\,\big| \, \F_{k}\right] &= \pn \left\{ \xn \left(1-Im^{(n)}_{k}\right)e^{-\gamma_n\left( Sp^{(n)}_{k}\right)}+ \Big(1-e^{-\gamma_n\left(Sp^{(n)}_{k}\right)}\Big)\right\}\\
    \Pro\left[\xi^{(n)}_{k+1} = -1,\phi_{k+1}^{(n)}=C \, \big| \, \F_k\right] &= \pn \left\{\xn \left(1-Im^{(n)}_k\right) \Big(1-e^{-\gamma_n\left(Sp^{(n)}_k\right)}\Big) + \Big(1-e^{-\gamma_n\left(Sp^{(n)}_k\right)}\Big)\right\}.
\end{align*}
The above four probabilities sum up to $4\pn \Big(1-e^{-\gamma_n\left(Sp^{(n)}_k\right)}\Big) + \xn \pn$. This choice of the conditional distribution for the occurrence of small price changes guarantees that the bid and ask price do not cross. If the spread is small, the first and second term of these probabilities are approximately of the same size.  
Moreover, the probability of an upward price change is increasing with the imbalance factor, i.e.~a price increase is more likely to occur if the standing volume at the top of buy side is significantly higher than the standing volume at the top of the sell side. This behaviour of the price is motivated by the empirical observations in e.g. \cite{CHW09, YZ15}. Moreover, if the spread is equal to $\xn$ the probabilities of observing an increase in the best bid price or a decrease in the best ask price are zero. For large spreads, the conditional probabilities are all dominated by their second term and hence are all of similar size. Here, the parameter $\gamma_1 >0$ controls the influence of the spread on the order book dynamics. In particular, we obtain the following feedback functions:
\[ p^{(n)}_b\left(S^{(n)}_k\right) = -\left(1-Im^{(n)}_k\right) +\Big(1- e^{-\gamma_n\left(Sp^{(n)}_k\right)}\Big), \hspace{0.3cm} p^{(n)}_a\left(S^{(n)}_k\right) = Im^{(n)}_k - \Big(1- e^{-\gamma_n\left(Sp^{(n)}_k\right)}\Big)\]
and
\[ \left(r^{(n)}_b(S^{(n)}_k)\right)^2 = 2\Big(1-e^{-\gamma_n\left(Sp^{(n)}_k\right)}\Big) + \mathcal{O}(\xn), \hspace{0.2cm} \left(r^{(n)}_a(S^{(n)}_k)\right)^2 =  2\Big(1-e^{-\gamma_n\left(Sp^{(n)}_k\right)}\Big) + \mathcal{O}(\xn).\]

Note that for all $s\in E$ with $Sp(s) = \xn$, the diffusion coefficients vanish, while the drift of the bid price is negative and the drift of the ask price is positive. This guarantees that the prices move apart if the spread equals $\xn$.\par
Next, let us turn to the conditional probabilities of observing large price changes. In our setting, it is important that the jump intensities are independent of the order book dynamics. Nevertheless, the jump sizes are allowed to vary across different states of the book. In order to ensure that the bid and ask prices do not cross, the jump sizes of the bid and ask must depend on the current spread. Moreover, small standing volumes at the top of the bid or ask side increase the size of a large jump. As noted in Remark \ref{rmk:extInfl} the jump behaviour might also be influenced by external factors. To model such external influences, we take a discretized Poisson process $(Y^{(n)}_k)_{k \geq 1}$ with intensity parameter $\sigma > 0$, which only jumps at times $\{\t_k : k =1,\cdots, N^{(n)}_T\}$ and fix some threshold level $\kappa>0$. If $Y^{(n)}_k$ crosses the threshold level, jump sizes increase significantly by a factor $1+\eta_1\geq 1$. Altogether, the  sizes of the large price jumps depending on the current state $s\in E$ and external influence $y \in \N_0$ are modelled as follows: Take $\eta_2>0$ and $j^+_b, j^+_a, j^-_b, j^-_a \in \xn\Z$ with $j^+_b, j^+_a > \xn$ and $j^-_b, j^-_a < -\xn$. Then, we define
\begin{align*}
    J^{+}_b(s,y) &= \min\left\{\rho(y) j^+_b, \ Sp(s)-\xn\right\}, \quad &  J^{+}_a(s,y) &=\left\lfloor\frac{ \rho(y)\eta_2j_a^+}{VolAsk(s)\xn}\right\rfloor\xn,\\
    J^{-}_b(s,y) &= \left\lfloor\frac{\rho(y)\eta_2 j^-_b}{VolBid(s)\xn}\right\rfloor\xn,\quad & J^{-}_a(s,y) &= \max\left\{\rho(y) j^-_a, \ -Sp(s)+\xn \right\},
\end{align*}
where $\rho(y) := 1 + \eta_1 \1(y > \kappa)$ and the parameters $\eta_1, \eta_2$ control the impact of the external factor and the cumulative standing volumes, respectively, on the size of the large jumps.

Now, for non-negative $\lambda^+_b, \lambda^-_b, \lambda^+_a, \lambda^-_a  \in [0,1]$ with $\lambda^+_b + \lambda^-_b + \lambda^+_a + \lambda^-_a = 1$, we set
\begin{align*}
    \Pro\left[\xi^{(n)}_{k+1} = j,\phi_{k+1}^{(n)}=A \, \big| \, \F_{k}\right] &=
    \tn \left\{\lambda^+_b \1\left(\x_j = J^+_b(S^{(n)}_k, Y^{(n)}_k)\right) +  \lambda^-_b \1\left(\x_j = J^-_b(S^{(n)}_k, Y^{(n)}_k) \right)\right\},\\
    \Pro\left[\xi^{(n)}_{k+1} = j,\phi_{k+1}^{(n)}=C \, \big| \, \F_k\right] &= \tn\left\{ \lambda^+_a \1\left(\x_j = J^+_a(S^{(n)}_k, Y^{(n)}_k)\right) +  \lambda^-_a \1\left(\x_j = J^-_a(S^{(n)}_k,Y^{(n)}_k)\right)\right\}.
\end{align*}
Hence, with probability $\tn$ a large price change occurs. These choices of probabilities for the large price jumps yield the following feedback functions: for $I = b,a,$ we have
\[K_I\left(S^{(n)}_{k}, Y^{(n)}_k, dx\right) = \lambda^-_I\varepsilon_{J^-_I(S^{(n)}_k,Y^{(n)}_k)}(dx) + \lambda^+_I \varepsilon_{J^+_I(S^{(n)}_k,Y^{(n)}_k)}(dx), \hspace{0.3cm} Q_I(dx) = \lambda^-_I\varepsilon_{-1}(dx) + \lambda^+_I\varepsilon_1(dx)\]
and
\[\theta_I\left(S^{(n)}_k, Y^{(n)}_k, x\right) = \begin{cases} J^-_I(S^{(n)}_k,Y^{(n)}_k) \quad &: x \in (-\infty, -1]\\
    J^-_I(S^{(n)}_k,Y^{(n)}_k) + \frac{x+1}{2}\{J^+_I(S^{(n)}_k,Y^{(n)}_k) - J^-_I(S^{(n)}_k,Y^{(n)}_k)\} \quad &: x \in (-1,1)\\
    J^+_I(S^{(n)}_k,Y^{(n)}_k) \quad &: x \in [1, \infty)  \end{cases}.\]
In the following, we denote by $p^{(n)}_{k+1} := \Pro[\phi^{(n)}_{k+1} \in \{A,C\}|\F_{k}]$ the conditional probability of a price changing event at time $\t_{k+1}$ which is uniquely determined by the previous equations. Now, let us turn to the limit order placements. For simplicity, we assume that they are always of size $10$ and are normally distributed around the best bid respectively ask price, i.e.
\begin{align*}
    \Pro\left[\phi^{(n)}_{k+1} = B,\ \omega^{(n)}_{k+1} = 10,\ \pi^{(n)}_{k+1} \in dy\, \big| \, \F_{k}\right] &= \left(1-p^{(n)}_{k+1}\right) \left(1-Im^{(n)}_k\right)\frac{1}{2\pi}e^{-y^2} dy,\\
\Pro\left[\phi^{(n)}_{k+1} = D,\ \omega^{(n)}_{k+1} = 10,\ \pi^{(n)}_{k+1} \in dy\, \big| \, \F_{k}\right] &= \left(1-p^{(n)}_{k+1}\right) Im^{(n)}_k \frac{1}{2\pi}e^{-y^2}dy.
\end{align*}
Moreover, cancellation of volume is supposed to be proportional to the current volume: for all $x \leq 0,$
\begin{align*}
    &\Pro\left[\phi^{(n)}_{k+1} = B,\ \omega^{(n)}_{k+1} \in dx,\ \pi^{(n)}_{k+1} \in dy \, \big| \, \F_k\right] = \left(1-p^{(n)}_{k+1}\right) \frac{Im^{(n)}_k}{v^{(n)}_{b,k}(y)} \1_{\left[-v^{(n)}_{b,k}(y), 0\right]}(x) \frac{1}{2\pi} e^{-y^2}dx\, dy,\\
    &\Pro\left[\phi^{(n)}_{k+1} = D,\ \omega^{(n)}_{k+1} \in dx,\ \pi^{(n)}_{k+1} \in dy \, \big| \, \F_k \right] =\left(1-p^{(n)}_{k+1}\right)  \frac{1-Im^{(n)}_k}{v^{(n)}_{a,k}(y)}\1_{\left[-v^{(n)}_{a,k}(y), 0\right]}(x) \frac{1}{2\pi} e^{-y^2} dx\, dy.
\end{align*}
We have chosen the limit order placements and cancellations in such a way that a high imbalance factor results in more order placements at the ask side and more order cancellations at the bid side, while a small imbalance factor leads to more order placements at the bid side and more order cancellations at the ask side. This induces an equalizing effect. Supposing $\vn=\tn$, the coefficient functions of the relative volume densities are given by
\begin{align*}
    f^{(n)}_b\left(S^{(n)}_k, x\right) &= \frac{1-p^{(n)}_{k+1}}{\xn}\frac{1}{2\pi}  \int_{I^{(n)}(x)} \Big\{ 10\cdot  \left(1-Im^{(n)}_k\right) - \frac{v^{(n)}_{b,k}(y)}{2} Im^{(n)}_k\Big\} e^{-y^2} dy,\\
    f^{(n)}_a\left(S^{(n)}_k, x\right) &= \frac{1-p^{(n)}_{k+1}}{\xn}\frac{1}{2\pi} \int_{I^{(n)}(x)} \left\{ 10 \cdot Im^{(n)}_k - \frac{v_{a,k}(y)}{2}\left(1-Im^{(n)}_k\right)\right\} e^{-y^2}dy.
\end{align*}

We run two different simulations of the above specified model. For both, we suppose that $\xn=n^{-1}$, $\pn=n^{-1/2}$, $\tn = (\xn)^2\pn=n^{-5/2}$ and choose the parameters
\[n = 100, \quad h = 0.55, \quad \gamma_1 = 1, \quad \eta_2 = 100, \quad T = 2.\]
In a first simulation we further choose $\eta_1=0$ and $\lambda_b^- = 1,$ $\lambda_b^+= \lambda_a^- = \lambda_a^+ = 0,$ i.e.~only downward jumps at the best bid price are possible and there is no external factor. Moreover, we start with a bid price $B_0 = 6.9$, an ask price $A_0 = 7$, and with a limit order book that has a severe imbalance at time $t = 0:$ standing volumes at the bid side are much higher than standing volumes at the ask side, i.e. we choose
\[v_{0,b}(x) = 0.0075(x-4)^2(x+4)^2 \1_{[-4,4]}(x), \quad v_{a,0}(x) = 0.0025 (x-4)^2(x+4)^2 \1_{[-4,4]}(x).\]
For these parameter values, Figure \ref{fig:SimFullLOBPrice} shows the evolution of the best bid and ask prices over time, while Figure \ref{fig:SimFullLOBAbVol} depicts the evolution of the absolute volume density functions of the visible book, i.e.
\[u_b(x) = v_b(-x+B)\1(x\leq B), \quad u_a(x) = v_a(x-A)\1(x\geq A),\]
over time.
The evolution of the prices is influenced by the spread as well as the imbalance factor. We start with a quite small spread and a large imbalance factor implying very small price volatilities, a slightly negative drift of the best bid price, and a positive drift of the best ask price. Hence, the prices move apart from each other. At $t \approx 0.4,$ we observe a price drop in the best bid price which heavily increases the spread between bid and ask. Therefore, both prices become more volatile, but imbalances are still significant. After a second price drop in the best bid price at $t \approx 0.7$, the huge spread now dominates the price evolution and the spread decreases. In the last quarter, we observe a similar price evolution of the best bid and ask price, which is caused by the fact that the imbalance factor and the spread stabilize around $0.5$ and $\ln(2)$, respectively. 

\begin{figure}[H]
    \centering
    \includegraphics[scale = 0.5]{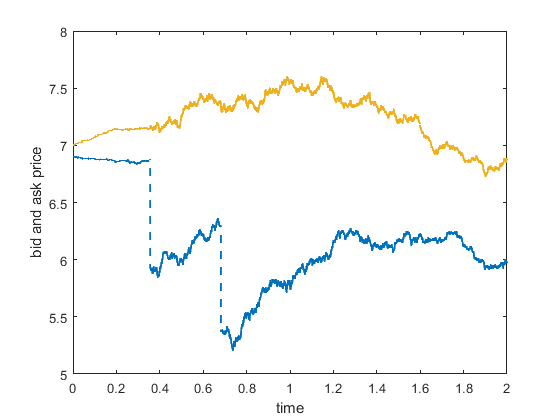}
    \caption{The evolution of the best bid price (blue) and the best ask price (yellow).}
    \label{fig:SimFullLOBPrice}
\end{figure}

In Figure \ref{fig:SimFullLOBAbVol} we observe that the cumulative volumes at the top of both sides of the limit order book converge. In this simulation study the sell side volume approaches the bid side volume because we have chosen the size of the order placements much greater than the size of average cancellations for the initial volume density functions. If placements would be of smaller size, the opposite effect could be observed, i.e. the buy side volumes at the top of the book would decrease to approach the sell side volumes at the top of the book. Moreover, the price drops in the bid price lead to a significant decrease of order volumes at the top of the bid side and hence to a decrease of the volume imbalance factor, which subsequently forces the spread to narrow again.

\begin{figure}[H]
    \centering
    \includegraphics[scale = 0.5]{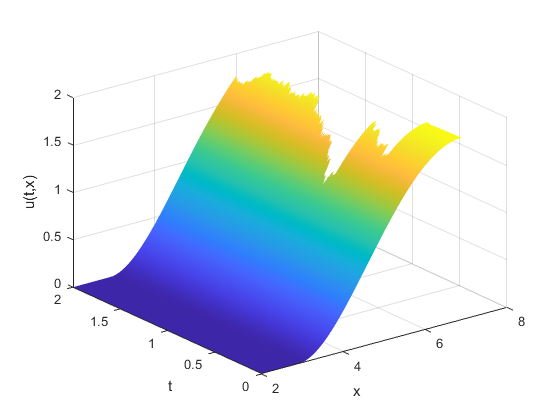}
    \includegraphics[scale = 0.5]{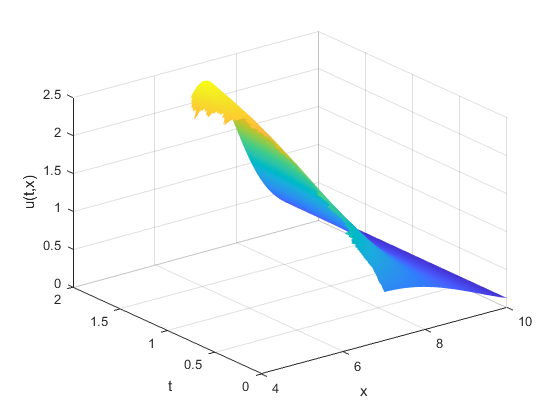}
    \caption{The evolution of the bid side (left) and ask side (right) volume density functions (in absolute coordinates; visible books only).}
    \label{fig:SimFullLOBAbVol}
\end{figure}

In a second simulation, starting from the same initial values, we allow jumps in all directions ($\lambda_b^+=\lambda_a^- = 0.15,$ $\lambda_b^-=\lambda_a^+ = 0.35$) and assume a rather strong dependence of the jump sizes on the external factor $Y^{(n)}$ by choosing 
$\eta_1 = 9$ and $\kappa = 10,$ i.e.~after $Y^{(n)}$ hits the threshold, the absolute value of the jump sizes increases by the factor $10$. We depict the corresponding bid and ask price evolution of two runs of our simulation in Figure \ref{fig:simExtInfluence}. In both runs $Y^{(n)}$ hits the threshold shortly after $t = 1$.

\begin{figure}[H]
    \centering
    \includegraphics[scale = 0.5]{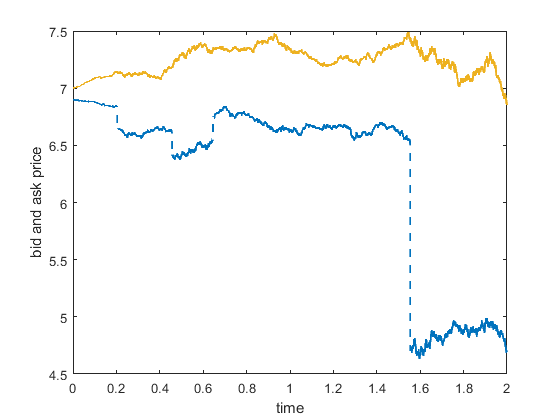} \quad
    \includegraphics[scale = 0.5]{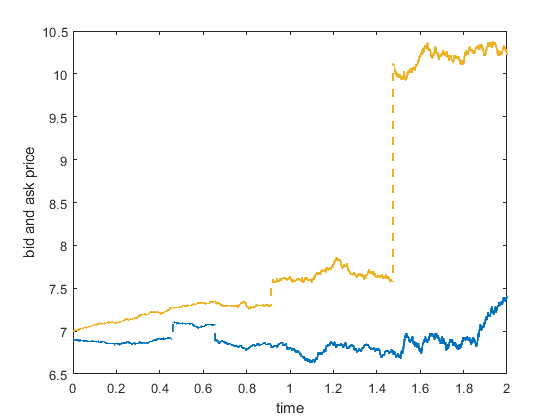}
    \caption{The evolution of the best bid price (blue) and the best ask price (yellow).}
    \label{fig:simExtInfluence}
\end{figure}

\section{Examples of large price jumps}\label{sec:jumpBe}

In this section we provide two examples of jump distributions that satisfy the rather technical Assumption \ref{ass:ExQ}. They are toy examples and not meant to mimic asset price jumps observed in real data, but rather to illustrate the range of jump distributions supported by our model.

\begin{Exp}\label{exp:normalJumps}

Let $M>0$ and suppose that $\delta_n=\xn$ and $(\xn)^{-1}\in\N$. Fix some continuous distribution function $F$ on $\mathcal{B}([-M,M])$ and two Lipschitz continuous, bounded functions $\mu,\sigma:E\rightarrow\R$ such that $\sigma(s)\geq 1$ for all $s\in E$. If $j\in\Z\backslash\{-1,0,1\}$ is such that there exists an $i\in\Z_M^{(n)}$ with
\[\sigma(s)x_i^{(n)} \leq x_j^{(n)}-\mu(s) < \sigma(s) x_i^{(n)} + \xn,\]
we set $$k_{a,j}^{(n)}(s)=F\left(\left[x_{i}^{(n)},x_{i+1}^{(n)}\right)\cap[-M,M]\right).$$ 
If none such $i\in \Z$ exists, we set $k_{a,j}^{(n)}(s)=0$. Note that all $k^{(n)}_{a,j}$s are well-defined as the intervals $[\sigma(s)x_i^{(n)},\sigma(s)x_i^{(n)} + \xn ),\ i\in\N,$ are non-overlapping due to $\sigma(s)\geq 1$. Then define for all $A\in\mathcal{B}(\R)$,
\[K_a(s,A):=F\left(\left\{\frac{x-\mu(s)}{\sigma(s)}:x\in A\right\}\cap[-M,M]\right),\qquad Q_a(A):=F(A\cap[-M,M])\]
and for all $x\in [-M,M]$,
\[\theta_a(s,x):=\sigma(s)x+\mu(s).\] 
Then Assumptions \ref{ass:ExQ} i), iv), and v) are satisfied by construction. Hence, we can apply Lemma \ref{lem:theta} and obtain for all $s\in E$ and $j\in Z$,
\begin{align*}
& K_a\Big(s,\theta_a\left(s,\left[x_{j}^{(n)},x_{j+1}^{(n)}\right)\right)\Big)-K^{(n)}_a\Big(s,\theta_a\left(s,\left[x_{j}^{(n)},x_{j+1}^{(n)}\right)\right)\Big)\\
&\qquad= F\Big(\left[x_{j}^{(n)},x_{j+1}^{(n)}\right) \cap[-M,M]\Big)-K_a^{(n)}\Big(s,\left[\sigma(s)x_j^{(n)}+\mu(s) ,\sigma(s)x_{j}^{(n)} + \mu(s) + \xn\right)\Big)=0,
\end{align*}
yielding the validity of Assumption \ref{ass:ExQ} iii). Moreover, we have for any $A\in\mathcal{B}([-M,M])$,
\[K_a\Big(s,\theta_a(s,A)\Big)=F(A)=Q_a(A).\]
Hence, Assumption \ref{ass:ExQ} ii) is also satisfied. Finally, note that
\begin{align*}
    &\sup_{s\in E}\sum_j\int_{[x_{j}^{(n)},x_{j+1}^{(n)})} \left|\1\Big(\theta_a^{(n)}(s,x_j^{(n)})=0\Big)-\1\Big(\theta_a(s,x)=0\Big)\right|Q_a(dx)\\
    &\qquad=\sup_{s\in E}\sum_j\1\Big(\sigma(s)x_j^{(n)}+\mu(s)\in \left(-\xn,0\right] \Big)Q_a\Big(\left[x_{j}^{(n)},x_{j+1}^{(n)}\right)\Big)
    \leq  \sup_{j} Q_a\Big(\left[x_{j}^{(n)},x_{j+1}^{(n)}\right)\Big)\longrightarrow 0.
\end{align*}
Therefore, Assumption \ref{ass:ExQ} vi) holds true as well. 
\end{Exp}

Example \ref{exp:normalJumps} shows that jump intensities can follow quite general distribution functions. While $\theta_a(s,\cdot)$ is linear for all $s\in E$ in the above example, we note that in general also non-linear transformations are possible. Moreover, observe that if $\mu(s)-M\sigma(s)> b-a$ for all $s=(b,v_b,a,v_a,t)\in E$, then the (negative) jumps of the ask price will never lead to a crossing of bid and ask prices. While in the above example the driving jump distribution in the pre-limit does not depend on $n$, the next example shows that this is generally possible. Indeed, even a slight dependence on $s\in E$ in the pre-limit is allowed as long as it vanishes for $n\rightarrow\infty$.

\begin{Exp}\label{exp:binJumps}

Suppose that $(\xn)^{-1}\in\N$ and $M\in\N$. Let $p_n:E\rightarrow(0,1)$ and $m_0,M_0:E\rightarrow\N$ satisfy 
\begin{align*}
\sup_{s\in E}|np_n(s)-\lambda|\rightarrow0,&\qquad M\geq M_0(s)\geq m_0(s)\geq1\quad\forall\ s\in E,\\ 
|m_0(s)-m_0(\tilde{s})|\leq L\|s-\tilde{s}\|_E,&\qquad|M_0(s)-M_0(\tilde{s})|\leq L\|s-\tilde{s}\|_E\quad\forall\ s,\tilde{s}\in E.
\end{align*} 
For $s\in E$ let
\[K^{(n)}_a(s,dx):=\sum_{k=m_0(s)}^{M_0(s)}{n\choose k+M-M_0(s)}(p_n(s))^{k+M-M_0(s)}(1-p_n(s))^{n-k-M+M_0(s)}\varepsilon_k(dx).\]
Then,
\[K_a(s,dx):=\sum_{k=m_0(s)}^{M_0(s)} e^{-\lambda}\frac{\lambda^{k+M-M_0(s)}}{(k+M-M_0(s))!}\varepsilon_{k}(dx),\qquad Q_a(dx):=\sum_{k=1}^M e^{-\lambda}\frac{\lambda^k}{k!}\varepsilon_k(dx)\]
and
\[\theta_a(s,x):=\begin{cases}x-M+M_0(s)&: x\in [M-M_0(s)+m_0(s),M]\\m_0(s)\cdot(x-M+M_0(s)-k_0(s)+1) &:x\in(M-M_0(s)+m_0(s)-1,M-M_0(s)+k_0(s))\\0&:x\in[-M, M-M_0(s)+m_0(s)-1]\end{cases}\]
satisfy for any $A\in\mathcal{B}([-M,M])$,
\[K_a(s,\theta_a(s,A))=K_a(s,\{x-M+M_0(s):x\in A\})=Q_a(A\cap[M-M_0(s)+m_0(s),M])\]
and 
\[Q_a(\{x\in A: \theta_a(s,x)=0\})=Q_a(A\cap[1,M-M_0(s)+m_0(s)-1])=Q_a(A)-Q_a(A\cap[M-M_0(s)+m_0(s),M]).\]
Hence, Assumption \ref{ass:ExQ} ii) is satisfied. By construction also Assumptions \ref{ass:ExQ} i) and \ref{ass:ExQ} v) hold. Moreover,
\begin{align*}
\sup_{s\in E}&\sum_j\Big| K_a\Big(s,\theta_a\left(s, \left[x_{j}^{(n)},x_{j+1}^{(n)}\right)\right)\Big)-K_a^{(n)}\Big(s,\theta_a\left(s,\left[x_{j}^{(n)},x_{j+1}^{(n)}\right)\right]\Big)\Big|\\
= &\sup_{s\in E}\sum_{k=m_0(s)}^{M_0(s)}\Big| K_a\Big(s,\{k\}\Big)-K^{(n)}_a\Big(s,\{k\}\Big)\Big|\\
=&\sup_{s\in E}\sum_{k=m_0(s)+M-M_0(s)}^M\left|e^{-\lambda}\frac{\lambda^{k}}{k!}- {n\choose k}(p_n(s))^{k}(1-p_n(s))^{n-k}\right|\leq \sup_{s\in E}2n(p_n(s))^2\longrightarrow 0,
\end{align*}
i.e.~Assumption \ref{ass:ExQ} iii) is satisfied. If $s,\tilde{s}\in E$ satisfy $M_0(s)-m_0(s)=M_0(\tilde{s})-m_0(\tilde{s})$, then 
\[|\theta_a(s,x)-\theta_a(\tilde{s},x)|\leq  |M_0(s)-M_0(\tilde{s})|\leq L\|s-\tilde{s}\|_E\quad\forall\ x\in[-M,M].\]
If $s,\tilde{s}\in E$ satisfy $M_0(s)-m_0(s)> M_0(\tilde{s})-m_0(\tilde{s})$, then we have for all $x\in[-M,M]$ the estimate
\begin{align*}
    |\theta_a(s,x)-\theta_a(\tilde{s},x)|&\leq |M_0(s)-M_0(\tilde{s})|+M\varepsilon_{\big(M-M_0(s)+m_0(s)-1,M-M_0(\tilde{s})+m_0(\tilde{s})\big)}(\{x\})\\
    &\leq |M_0(s)-M_0(\tilde{s})|+M|m_0(\tilde{s})-M_0(\tilde{s})-m_0(s)+M_0(s)+1|\\
    &\leq  (1+2M)|M_0(s)-M_0(\tilde{s})|+2M|m_0(\tilde{s})-m_0(s)|\\
    &\leq (1+4M)L\|s-\tilde{s}\|_E.
\end{align*}
Hence, Assumption \ref{ass:ExQ} iv) holds. 
Finally, we note that for all $x_j^{(n)}\in\N$ we have $\theta^{(n)}_a(s,x_j^{(n)})=\theta_a(s,x_j^{(n)})$. As $Q_a$ only charges $\N$, this shows that Assumption \ref{ass:ExQ} vi) is also satisfied.
\end{Exp}
Example \ref{exp:binJumps} illustrates very well the restrictions imposed on the limiting jump distribution through Assumption \ref{ass:ExQ}: while the range of jump sizes (parameterized through $m_0$ and $M_0$) can differ across states, the $\lambda$ determining the jump intensities has to be constant and cannot depend on the state $s\in E$. This is necessary to obtain jump diffusion dynamics - rather than more general (and even more complicated) semimartingale dynamics - in the high-frequency limit.

\section{Sketch of the proof of the main result (Theorem \ref{res:mainTheorem})}\label{sec:sketch}

In this section, we present a step-by-step proof sketch of our main result (Theorem \ref{res:mainTheorem}). The technical details can be found in Section \ref{sec:proofs}.

\subsubsection*{Step 1: State and time separation}

By making use of the time change theorem, we can simplify our subsequent analysis to equidistant,  deterministic order arrival times, where the time intervals between two consecutive order arrivals are of length $\tn.$ To this end, we set $\t_k := k\tn$ for $k \in\N_0$, $T_n:=\lfloor T/\tn\rfloor$, and define the \textit{state process} $\eta^{(n)}$ via
$$\eta^{(n)}(t) := \sum_{k=0}^{T_n}S^{(n)}_k\1_{\left[\t_k, \t_{k+1}\right)}(t),\quad t\in[0,T].$$  
Introducing the process 
$$\tau^{\eta,(n)}(u) := \sum_{k=0}^{T_n}\tau^{(n)}_k\1_{\left[\t_k, \t_{k+1}\right)}(u),\quad u\in[0,T],$$ 
we define the \textit{time process} $\zeta^{(n)}$ via
\[\zeta^{(n)}(t) := \inf\left\{u > 0: \tau^{\eta,(n)}(u) > t\right\}\wedge (T_n+1)\tn,\quad t\in[0,T].\]

 The advantage of the state and time separation is that we can focus on first analyzing the convergence of the state process $\eta^{(n)}$, for which we will prove the following convergence result (cf.~Steps 2-5 below).

\begin{Prop}\label{res:mainTheorem-detTimes}
   Let the assumptions of Theorem \ref{res:mainTheorem} be satisfied. Then, $\eta^{(n)}$ converges weakly in the Skorokhod topology to $\eta = (B^{\eta}, v_b^{\eta}, A^{\eta}, v^{\eta}_a, \tau^{\eta})$ being the unique strong solution of the coupled diffusion-fluid system  \eqref{eq:eta}.
\end{Prop}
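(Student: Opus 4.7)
The plan is to write $\eta^{(n)}$ as a pure-jump semimartingale on the equidistant grid $\{\t_k = k\tn\}$, decompose every coordinate into a predictable drift, an $L^2$-valued martingale and (for the prices) a large-jump discrete point process, and identify each piece in the limit via the continuity theorem for infinite-dimensional stochastic integrals of Kurtz and Protter \cite{KurtzProtter2}. A first simplification is a fluid reduction of the volume dynamics: by Assumption \ref{ass:probVol} ii) the $\F_{k-1}$-conditional expectation of $\vn M^{(n)}_{I,k}(x)$ equals $\tn f^{(n)}_I[S^{(n)}_{k-1}](x)$, so replacing the operator by this expectation produces an intermediate process $\tilde{\eta}^{(n)}$. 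Assumption \ref{ass:probVol} i), the scaling $\vn/\tn \to 1$, and Doob's inequality in $L^2(\R)$ yield $\sup_{t\leq T}\|\tilde{\eta}^{(n)}(t)-\eta^{(n)}(t)\|_E \to 0$ in probability, so it suffices to prove $\tilde{\eta}^{(n)}\Rightarrow\eta$.

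The next difficulty is that the recursion \eqref{dynVol} contains a state-dependent shift by $\delta B^{(n)}_k$ or $-\delta A^{(n)}_k$, which blocks a direct application of \cite{KurtzProtter2}. To remove it I will pass to absolute price-coordinates by shifting each volume density by the current best price; the resulting process $\tilde{\eta}^{(n),abs}$ satisfies a purely additive recursion for its $L^2$-components, with all state-dependence now confined to the argument of $f^{(n)}_I$. In these coordinates $\tilde{\eta}^{(n),abs}$ can be represented as a stochastic integral against a vector-valued driver consisting of the deterministic time, two compensated small-jump price martingales $Z^{(n)}_b, Z^{(n)}_a$ associated with the regime $\{\xn|\xi^{(n)}_k|\leq\delta_n\}$, and two discrete jump measures $\mu^{(n)}_b, \mu^{(n)}_a$ on $[0,T]\times[-M,M]$ carried by the points $(\t_k,\Theta^{(n)}_{I,k})$, where Assumption \ref{ass:ExQ} v) supplies a unique grid value $\Theta^{(n)}_{I,k}$ with $\xn\xi^{(n)}_k = \theta^{(n)}_I(S^{(n)}_{k-1},\Theta^{(n)}_{I,k})$ for each large-jump event.

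Joint convergence of the drivers is then established. For the continuous components, the predictable quadratic variation of $Z^{(n)}_I$ converges uniformly in $t$ to the integral of $(r_I)^2$ along the limiting path by Assumptions \ref{ass:Prob} i) and \ref{ass:limitFct} combined with $\tn/\xn\to 0$, and the Lindeberg condition follows from $\delta_n/\eta_n\to 0$ together with the uniform second-moment bound of Assumption \ref{ass:Prob} i); the martingale central limit theorem therefore yields $(Z^{(n)}_b, Z^{(n)}_a)\Rightarrow(Z_b, Z_a)$ with $Z_b, Z_a$ independent standard Brownian motions. For the jump components, the compensator of $\mu^{(n)}_I$ at time $\t_k$ is $\tn K^{(n)}_I(S^{(n)}_{k-1}, \theta^{(n)}_I(S^{(n)}_{k-1}, dy))$; equation \eqref{eq:Q} together with Assumption \ref{ass:ExQ} ii), iii) and vi) converts this into a uniform-in-state approximation of $\lambda\otimes Q_I$, crucially decoupling the limiting jump intensity from the order-book state and delivering $\mu^{(n)}_I\Rightarrow\mu^Q_I$. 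All four limit processes are mutually independent because the four driving pieces are supported on disjoint pre-limit events. Tightness of $\tilde{\eta}^{(n),abs}$ in the Skorokhod space then follows from uniform bounds on its semimartingale characteristics, and the continuity theorem of \cite{KurtzProtter2}, applied with $\theta^{(n)}_I\to\theta_I$ uniformly (Assumption \ref{ass:ExQ} i) and iv)), identifies every weak limit as a solution of the absolute-coordinate analogue of \eqref{eq:eta}.

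To conclude, a Gronwall estimate in $E$ based on the Lipschitz continuity of all coefficients (Assumptions \ref{ass:randTimes}, \ref{ass:probVol}, \ref{ass:limitFct}, \ref{ass:ExQ} iv)) together with the translation-Lipschitz bound \eqref{ass:Lcond} establishes pathwise uniqueness for the absolute-coordinate limit; hence the whole sequence $\tilde{\eta}^{(n),abs}$ converges in law. Undoing the coordinate change consists in evaluating the $L^2$-components at a shift that depends on the càdlàg price paths, and pushing the weak convergence through this map delivers $\tilde{\eta}^{(n)}\Rightarrow\eta$, which combined with the fluid reduction of the first step gives $\eta^{(n)}\Rightarrow\eta$. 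Because $\eta$ has jumps, this last step is not immediate from the standard continuous mapping theorem and will rely on the refinements on the Skorokhod space with its $J_1$ topology due to Whitt \cite{W80, W02}. I expect the main technical obstacle to be the joint identification of the large-jump drivers in the previous paragraph: the compensators $K^{(n)}_I(S^{(n)}_{k-1},\cdot)$ are genuinely state-dependent, and passing to the limit inside the discrete stochastic integral $\int \theta^{(n)}_I(\tilde{\eta}^{(n),abs}(u-),y)\,\mu^{(n)}_I(du,dy)$ requires a delicate interplay between the discretization of $\theta_I$ from \eqref{def:thetaN1}--\eqref{def:thetaN2}, Assumption \ref{ass:ExQ} v)--vi) and the Kurtz--Protter continuity result in infinite dimensions.
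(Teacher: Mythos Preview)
Your proposal follows essentially the same route as the paper: fluid reduction of the volume dynamics to obtain $\tilde{\eta}^{(n)}$, passage to absolute volume coordinates $\tilde{\eta}^{(n),abs}$ via the shift $\psi$, convergence of the standardized small-jump martingales and the re-parametrized large-jump point measures to independent drivers, application of the Kurtz--Protter continuity theorem, Gronwall-based pathwise uniqueness, and finally undoing the shift via Skorokhod-type arguments. Two minor corrections: since $Z^{(n)}_I$ is already standardized by $r^{(n)}_I$, its predictable quadratic variation converges to $t$ (not to $\int r_I^2$), and the approximation $\eta^{(n)}\approx\tilde{\eta}^{(n)}$ requires more than Doob's inequality alone---because the coefficients of $\tilde{\eta}^{(n)}$ are evaluated at $\tilde{S}^{(n)}$ rather than $S^{(n)}$, a Gronwall closure (the paper's Proposition~\ref{res:convApprox}, which goes through two further intermediate processes) is needed to control the feedback.
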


Let us now define the composition of the state process $\eta^{(n)}$ with the time process $\zeta^{(n)}$ as
\[S^{(n),*}(t) := \eta^{(n)}\left(\zeta^{(n)}(t) - \tn\right),\quad t\in[0,T].\]

Relying on a time change argument for processes with discontinuities,  
our main result readily follows from statements ii) and iii) of the following corollary. 

\begin{Cor}\label{Cor:FromDetToRanTimes}
Let Assumptions \ref{ass:IV}--\ref{ass:Scaling} be satisfied. Then,
    \begin{enumerate}
        \item[i)] $\zeta^{(n)} \Rightarrow \zeta$ in the Skorokhod topology, where $\zeta^{-1}(t) = \tau^\eta(t)= \int^t_0 \varphi(\eta(u))du,$
        \item[ii)] $S^{(n),*} \Rightarrow \eta\circ \zeta =: S$ in the Skorokhod topology, and
        \item[iii)] as $n\rightarrow\infty$,
        $$\Pro\left[\sup_{t\in[0,T]}\big\|S^{(n),*}(t)-S^{(n)}(t)\big\|_E>0\right]\rightarrow0.$$
    \end{enumerate}
\end{Cor}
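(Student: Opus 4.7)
The plan is to establish (i), (ii), (iii) in this order, with (i) feeding into (ii), and then (ii) combined with the near-identity in (iii) yielding the main theorem (weak convergence being robust to alterations on sets of probability vanishing in $n$).

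For (i), I invoke Proposition \ref{res:mainTheorem-detTimes}, which gives $\eta^{(n)}\Rightarrow\eta$ in the Skorokhod topology on $D([0,T],E)$; projecting onto the last coordinate gives $\tau^{\eta,(n)}\Rightarrow\tau^\eta$. Since $\varphi$ takes values in $(0,1]$, the limit $\tau^\eta(s)=\int_0^s\varphi(\eta(u))du$ is almost surely continuous and strictly increasing. The generalized inverse operation $\lambda\mapsto\inf\{u>0:\lambda(u)>\cdot\}$ is continuous on the Skorokhod space at any such limit point (see e.g.\ Whitt~\cite{W02}, Ch.~13), so the continuous mapping theorem yields $\zeta^{(n)}\Rightarrow\zeta=(\tau^\eta)^{-1}$, with the min-with-$(T_n+1)\tn$ cutoff being inconsequential in the limit.

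For (ii), I set $\hat{\zeta}^{(n)}(t):=\zeta^{(n)}(t)-\tn$ so that $S^{(n),*}(\cdot)=\eta^{(n)}\circ\hat{\zeta}^{(n)}(\cdot)$. Since $\tn\to0$, the shift is uniform and $\hat{\zeta}^{(n)}\Rightarrow\zeta$ as well, jointly with $\eta^{(n)}$ because $\zeta^{(n)}$ is a measurable functional of $\tau^{\eta,(n)}$. Passing to a Skorokhod representation, I can assume almost-sure convergence $(\eta^{(n)},\hat{\zeta}^{(n)})\to(\eta,\zeta)$ in the product Skorokhod topology, and because $\zeta$ is continuous and strictly increasing, the composition map $(x,\lambda)\mapsto x\circ\lambda$ is continuous at $(\eta,\zeta)$. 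Hence $S^{(n),*}=\eta^{(n)}\circ\hat{\zeta}^{(n)}\Rightarrow\eta\circ\zeta=S$.

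For (iii), the argument is pathwise. If $t\in[\tau^{(n)}_k,\tau^{(n)}_{k+1})$ with $k+1\leq T_n$, then by the piecewise-constant structure of $\tau^{\eta,(n)}$ one reads off $\zeta^{(n)}(t)=\t_{k+1}$, so $\zeta^{(n)}(t)-\tn=\t_k$, and $\eta^{(n)}(\t_k)=S^{(n)}_k=S^{(n)}(t)$. This identity extends to the interval ending at $\tau^{(n)}_{T_n+1}$, and the only obstruction to agreement on all of $[0,T]$ is the boundary event $\{\tau^{(n)}_{T_n+1}<T\}$, on which $S^{(n)}$ records further events invisible to $S^{(n),*}$. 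On the complement, $\sup_t\|S^{(n),*}(t)-S^{(n)}(t)\|_E=0$ deterministically. The main obstacle is showing that this boundary event has vanishing probability: using the uniform convergence $\varphi^{(n)}\to\varphi$ and the second-moment control of Assumption~\ref{ass:randTimes}\,i), a Chebyshev-type estimate on $\tau^{(n)}_{T_n+1}-T_n\tn\,\E[\varphi(\eta)]$ combined with tightness of $\eta^{(n)}$ (already available from Proposition~\ref{res:mainTheorem-detTimes}) localizes $\eta^{(n)}$ to a set on which $\varphi$ is bounded below, forcing $\tau^{(n)}_{T_n+1}\geq T$ with probability tending to one. The only other delicate point is in (ii), namely verifying the hypotheses of the composition continuity theorem despite the $\tn$-shift, but this reduces to uniform smallness of the shift which is immediate from Assumption~\ref{ass:Scaling}.
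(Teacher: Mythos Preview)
Your treatment of (i) and (ii) is essentially the paper's: weak convergence of $\eta^{(n)}$ from Proposition~\ref{res:mainTheorem-detTimes}, continuity of the inverse map at strictly increasing limits (Whitt~\cite{W02}, Corollary~13.6.4), and continuity of the composition map at continuous, strictly increasing time changes (Whitt~\cite{W80}, Theorem~3.1). The joint convergence of $(\eta^{(n)},\zeta^{(n)})$ and the handling of the $\tn$-shift are done the same way.

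For (iii), however, your argument has a genuine gap. You correctly identify that $S^{(n),*}$ and $S^{(n)}$ coincide pathwise as long as at most $T_n$ events occur in real time $[0,T]$, so the obstruction is the event $\{\tau^{(n)}_{T_n+1}\leq T\}$. But your proposed method of controlling this probability goes in the wrong direction: a lower bound on $\varphi$ yields only $\tau^{(n)}_{T_n+1}\gtrsim c\,(T_n{+}1)\tn\approx cT$ with $c=\inf\varphi\leq 1$, which is in general strictly smaller than $T$ and therefore cannot force $\tau^{(n)}_{T_n+1}\geq T$. No Chebyshev or tightness argument fixes this, because it is the \emph{upper} bound $\varphi\leq 1$ in Assumption~\ref{ass:randTimes}\,iii), not a lower bound, that is structurally relevant.

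The paper's proof of (iii) is entirely different and much shorter. Rather than attempting a direct moment estimate, it expresses the obstruction through $\tau^{\eta,(n)}(T)$ and then invokes the already-established weak convergence $\tau^{\eta,(n)}(T)\Rightarrow\tau^\eta(T)$ (a one-dimensional projection of Proposition~\ref{res:mainTheorem-detTimes}, via Theorem~12.5 in~\cite{Billingsley}) together with the deterministic inequality $\tau^\eta(T)=\int_0^T\varphi(\eta(u))\,du\leq T$. The point is that once Proposition~\ref{res:mainTheorem-detTimes} is available, no fresh probabilistic estimate on $\tau^{(n)}$ is needed; the constraint $\varphi\leq 1$ does all the work. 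You should rework (iii) along these lines.
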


In the subsequent steps we present a sketch of the proof of Proposition \ref{res:mainTheorem-detTimes}. To this end, let us define $\eta^{(n)} := \left(B^{\eta,(n)}, v^{\eta,(n)}_b, A^{\eta, (n)}, v^{\eta,(n)}_a, \tau^{\eta,(n)}\right),$ where for $t\in \left[\t_k, \t_{k+1}\right)\cap[0,T],$
\begin{align}\label{def:detModelN}
\begin{aligned}
&B^{\eta,(n)}(t) = B^{(n)}_k, &\quad v^{\eta,(n)}_b(t,\cdot) = v^{(n)}_{b,k},\\
&A^{\eta,(n)}(t) = A^{(n)}_k, &\quad v^{\eta,(n)}_a(t,\cdot) = v^{(n)}_{a,k}, &\quad \tau^{\eta,(n)}(t) = \tau^{(n)}_k.
\end{aligned}
\end{align}

\subsubsection*{Step 2: Representation of the LOB-dynamics as a stochastic difference equation and convergence of its integrators}

To prove Proposition \ref{res:mainTheorem-detTimes}, we will apply results of Kurtz and Protter \cite{KurtzProtter2} about the convergence of stochastic differential equations in infinite dimension. To this end, we rewrite the discrete time dynamics of $\eta^{(n)}$ in the form of a proper stochastic difference equation, whose driving processes converge to limit processes which are independent of the order book sequence. First, we decompose
\[B^{\eta,(n)}(t) = B^{(n)}_0 + B^{\eta,s,(n)}(t) + B^{\eta,\ell,(n)}(t), \quad A^{\eta,(n)}(t) = A^{(n)}_0 + A^{\eta,s,(n)}(t) + A^{\eta, \ell,(n)}(t),\]
where
$B^{\eta,s,(n)}, A^{\eta, s,(n)}$ and $B^{\eta,\ell,(n)}, A^{\eta,\ell,(n)}$ describe the price dynamics of the small and large price changes, respectively, i.e.
\begin{align*}
    B^{\eta,s,(n)}(t) &:= \sum_{k=1}^{\ttn} \delta B^{(n)}_k \1\left(\left|\delta B^{(n)}_k\right| \leq \delta_n\right), &\, A^{\eta,s,(n)}(t) &:= \sum_{k=1}^{\ttn} \delta A^{(n)}_k\1\left(\left|\delta A^{(n)}_k\right| \leq \delta_n\right),\\
    B^{\eta,\ell,(n)}(t) &:= \sum_{k=1}^{\ttn}\delta B^{(n)}_k \1\left(\left|\delta B^{(n)}_k\right| > \delta_n\right), &\, A^{\eta,\ell,(n)}(t) &:= \sum_{k=1}^{\ttn}\delta A^{(n)}_k \1\left(\left|\delta A^{(n)}_k\right| > \delta_n\right).
\end{align*}
Next, observe that we can write the price dynamics of the small price changes $B^{\eta,s,(n)}$ and $A^{\eta,s,(n)}$ as
\begin{align}\label{eq:rewSP}
\begin{aligned}
    B^{\eta,s,(n)}(t) &:= \sum_{k=1}^{\ttn}\left\{p^{(n)}_b(S^{(n)}_{k-1})\tn + r^{(n)}_b(S^{(n)}_{k-1}) \delta Z^{(n)}_{b,k}\right\},\\
    A^{\eta,s,(n)}(t) &:= \sum_{k=1}^{\ttn}\left\{p^{(n)}_a(S^{(n)}_{k-1})\tn + r^{(n)}_a(S^{(n)}_{k-1}) \delta Z^{(n)}_{a,k}\right\},
\end{aligned}
\end{align}
where $Z^{(n)}_{I,k} := \sum_{j=1}^k\delta Z^{(n)}_{I,j}$ for $I = b,a$ and
\begin{align*}
    \delta Z^{(n)}_{b,j} &:= \frac{\delta B^{(n)}_j \1(0 < |\delta B^{(n)}_{j}| \leq \delta_n) - \tn p^{(n)}_b(S^{(n)}_{j-1})}{r^{(n)}_b(S^{(n)}_{j-1})},\\
    \delta Z^{(n)}_{a,j} &:= \frac{\delta A^{(n)}_j \1(0 < |\delta A^{(n)}_j| \leq \delta_n) - \tn p^{(n)}_a(S^{(n)}_{j-1})}{r^{(n)}_{a}(S^{(n)}_{j-1})}.
\end{align*}
Then, Proposition \ref{res:limitSJ} states that the processes 
\begin{equation}\label{def:intZ}
    Z^{(n)}_b(t) := \sum^{T_n}_{k = 1} Z^{(n)}_{b,k} \1_{\left[\t_k, \t_{k+1}\right)}(t), \quad Z^{(n)}_a(t) := \sum^{T_n}_{k=1} Z^{(n)}_{a,k} \1_{\left[\t_k, \t_{k+1}\right)}(t),\quad t\in[0,T],
\end{equation} 
converge to two independent Brownian motions and therefore the integrators in \eqref{eq:rewSP} converge to processes that are independent of the order book dynamics.

\begin{Prop}\label{res:limitSJ}
Let Assumptions \ref{ass:Prob} and \ref{ass:Scaling} be satisfied. Then, as $ n \rightarrow \infty,$ the process $(Z^{(n)}_{b}, Z^{(n)}_a)$ converges weakly in the Skorokhod topology to a standard planar Brownian motion $(Z_b, Z_a).$ In particular, $Z_b$ and $Z_a$ are independent.
\end{Prop}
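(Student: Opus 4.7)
The plan is to verify the hypotheses of a functional martingale central limit theorem (e.g. Theorem VIII.3.33 in Jacod--Shiryaev or Theorem 7.1.4 in Ethier--Kurtz) applied to the pair of discrete-time martingales $(Z^{(n)}_{b,k}, Z^{(n)}_{a,k})_{k \leq T_n}$. First, I would verify the martingale property of $Z^{(n)}_b$ and $Z^{(n)}_a$ with respect to $(\F_k)_{k\geq 0}$. By Assumption \ref{ass:Prob} i), we have
\begin{align*}
\E\big[\delta B^{(n)}_j \1(0 < |\delta B^{(n)}_j| \leq \delta_n) \,\big|\, \F_{j-1}\big]
&= \xn \cdot \frac{\tn}{\xn}\, p^{(n)}_b(S^{(n)}_{j-1}) = \tn\, p^{(n)}_b(S^{(n)}_{j-1}),
\end{align*}
and similarly for the ask side, so that $\E[\delta Z^{(n)}_{b,j}\,|\,\F_{j-1}] = \E[\delta Z^{(n)}_{a,j}\,|\,\F_{j-1}] = 0$. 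The centering in the definition of $\delta Z^{(n)}_{I,j}$ was chosen precisely for this purpose.

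Next, I would compute the predictable quadratic covariation. Using Assumption \ref{ass:Prob} i) once more,
\begin{align*}
\E\big[(\delta B^{(n)}_j)^2 \1(0 < |\delta B^{(n)}_j| \leq \delta_n)\,\big|\,\F_{j-1}\big] = (\xn)^2 \cdot \frac{\tn}{(\xn)^2}\big(r^{(n)}_b(S^{(n)}_{j-1})\big)^2 = \tn \big(r^{(n)}_b(S^{(n)}_{j-1})\big)^2,
\end{align*}
so that $\E[(\delta Z^{(n)}_{b,j})^2 \,|\,\F_{j-1}] = \tn - \tn^2(p^{(n)}_b(S^{(n)}_{j-1}))^2/(r^{(n)}_b(S^{(n)}_{j-1}))^2$. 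Since $|p^{(n)}_b|$ is uniformly bounded (Assumption \ref{ass:Prob}) and $r^{(n)}_b > \eta_n$, the correction term is of order $(\tn)^2/\eta_n^2$, so summing over $k \leq \lfloor t/\tn \rfloor$ yields a contribution of order $t\,\tn/\eta_n^2$, which tends to zero because $\tn/\xn \to 0$ together with $\xn \leq \delta_n$ and $\delta_n/\eta_n \to 0$ gives $\tn/\eta_n^2 \to 0$. Consequently, $\langle Z^{(n)}_b\rangle_t \to t$ and $\langle Z^{(n)}_a\rangle_t \to t$ in probability (and uniformly on $[0,T]$). For the cross term, the events $\{\phi^{(n)}_j = A\}$ and $\{\phi^{(n)}_j = C\}$ are disjoint, hence $\delta B^{(n)}_j \cdot \delta A^{(n)}_j = 0$ almost surely, and therefore
\begin{align*}
\E\big[\delta Z^{(n)}_{b,j}\, \delta Z^{(n)}_{a,j}\,\big|\,\F_{j-1}\big] = \frac{(\tn)^2\, p^{(n)}_b(S^{(n)}_{j-1})\, p^{(n)}_a(S^{(n)}_{j-1})}{r^{(n)}_b(S^{(n)}_{j-1})\, r^{(n)}_a(S^{(n)}_{j-1})},
\end{align*}
which summed over $k\leq\lfloor t/\tn\rfloor$ is again of order $t\tn/\eta_n^2 \to 0$. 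Hence $\langle Z^{(n)}_b, Z^{(n)}_a\rangle_t \to 0$ in probability.

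Finally, I would check the conditional Lindeberg condition. By construction, each single increment satisfies the uniform bound
\begin{align*}
\big|\delta Z^{(n)}_{I,j}\big| \leq \frac{\delta_n + \tn \,\|p^{(n)}_I\|_\infty}{\eta_n} \leq \frac{\delta_n}{\eta_n} + C\frac{\tn}{\eta_n},
\end{align*}
and both terms tend to zero by Assumption \ref{ass:Prob} i) and Assumption \ref{ass:Scaling}. Hence for every $\varepsilon > 0$ and $n$ large enough, $\1(|\delta Z^{(n)}_{I,j}| > \varepsilon) = 0$ identically, so the Lindeberg condition is trivially satisfied. Invoking the martingale CLT then yields the joint weak convergence $(Z^{(n)}_b, Z^{(n)}_a) \Rightarrow (Z_b, Z_a)$ in the Skorokhod topology to a continuous Gaussian martingale with deterministic covariation $\operatorname{diag}(t,t)$, which by Lévy's characterization is a standard planar Brownian motion; in particular, $Z_b$ and $Z_a$ are independent. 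The main (though still mild) technical obstacle is controlling the ratios $\tn/\eta_n^2$ arising from the state-dependent normalization by $r^{(n)}_I$, which is handled by carefully combining the scaling relations $\xn \leq \delta_n$, $\delta_n/\eta_n \to 0$, and $\tn/\xn \to 0$.
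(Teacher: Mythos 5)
Your overall strategy is the same as the paper's: treat $(\delta Z^{(n)}_{b,k},\delta Z^{(n)}_{a,k})_k$ as a martingale difference array, compute the predictable brackets, check the conditional Lindeberg condition, and invoke the functional martingale CLT. The martingale property, the expression $\E[(\delta Z^{(n)}_{b,j})^2\mid\F_{j-1}]=\tn-(\tn)^2(p^{(n)}_b)^2/(r^{(n)}_b)^2$, and the Lindeberg step (via $|\delta Z^{(n)}_{I,j}|\lesssim \delta_n/\eta_n+\tn/\eta_n\to 0$) all match the paper. (Minor slip: the cross-bracket carries a minus sign, $-(\tn)^2p^{(n)}_bp^{(n)}_a/(r^{(n)}_br^{(n)}_a)$, but only its magnitude matters.)

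There is, however, one genuine gap: your argument that the drift-correction terms vanish rests on the claim that $\tn/\eta_n^2\to 0$, which you assert follows from $\tn/\xn\to 0$, $\xn\leq\delta_n$ and $\delta_n/\eta_n\to 0$. It does not: these give $\tn/\eta_n\to 0$ but leave $\tn/\eta_n^2=(\tn/\eta_n)\cdot(1/\eta_n)$ indeterminate. For instance $\xn=\delta_n=n^{-1}$, $\tn=n^{-1}/\log n$, $\eta_n=n^{-1}\log n$ is compatible with Assumptions \ref{ass:Prob} and \ref{ass:Scaling}, yet $\tn/\eta_n^2=n/(\log n)^3\to\infty$, so your bound $t\,\tn\|p^{(n)}_b\|_\infty^2/\eta_n^2$ on the summed correction does not go to zero. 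The paper closes this by exploiting the lattice structure of the price increments rather than the crude lower bound $r^{(n)}_I>\eta_n$: since a nonzero increment $\delta B^{(n)}_k$ is an integer multiple of $\xn$, one has $|\delta B^{(n)}_k|\leq(\delta B^{(n)}_k)^2/\xn$ on $\{\delta B^{(n)}_k\neq 0\}$, whence
\begin{equation*}
\frac{\xn\,\big|p^{(n)}_b(S^{(n)}_{k-1})\big|}{\big(r^{(n)}_b(S^{(n)}_{k-1})\big)^2}\leq 1
\quad\text{a.s.},
\qquad\text{so}\qquad
\frac{\tn\,\big(p^{(n)}_b(S^{(n)}_{k-1})\big)^2}{\big(r^{(n)}_b(S^{(n)}_{k-1})\big)^2}\leq \big\|p^{(n)}_b\big\|_\infty\,\frac{\tn}{\xn},
\end{equation*}
and the summed correction is $O(t\,\tn/\xn)\to 0$ by Assumption \ref{ass:Scaling}; a Cauchy--Schwarz version of the same estimate disposes of the cross term. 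Replacing your $\eta_n$-based bound by this one repairs the proof; the role of $\eta_n$ is then confined to the Lindeberg condition, where your argument is already correct.
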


 Let us now turn to the processes $B^{\eta,\ell,(n)}, A^{\eta,\ell,(n)}$ corresponding to the price dynamics of the large price changes. First, note that their joint jump measure is given by
 \[\mu^{\eta,(n)}([0,t], dx, dy):=\sum_{k=1}^{\ttn} \1\left(\left|\delta B^{(n)}_k\right| > \delta_n\right) \varepsilon_{(\delta B^{(n)}_k,0)}(dx,dy)+ \1\left(\left|\delta A^{(n)}_k\right| > \delta_n\right) \varepsilon_{(0,\delta A^{(n)}_k)}(dx,dy)\]
 for all $t\in[0,T]$, since $A$ and $C$ events do not occur simultaneously. Setting for all $t\in[0,T]$,
 \[\mu_b^{\eta,(n)}([0,t],dx):=\mu^{\eta,(n)}([0,t] , dx,\{0\}),\quad \mu_a^{\eta,(n)}([0,t],dy):=\mu^{\eta,(n)}([0,t] ,\{0\},dy),\]
we can now rewrite $B^{\eta,\ell,(n)}$ and $A^{\eta,\ell,(n)}$ as
\[B^{\eta,\ell,(n)}(t)= \int_{\R} x \ \mu_b^{\eta,(n)}([0,t] , dx) \quad \text{ and } \quad A^{\eta,\ell,(n)}(t)= \int_{\R} y\ \mu_a^{\eta,(n)}([0,t] , dy)\]
for all $t\in[0,T]$. Then the compensator $\nu^{\eta,(n)}$ of $\mu^{\eta,(n)}$ is given by
\begin{align*}
\nu^{\eta,(n)}([0,t],dx, dy) &:= \sum_{k=1}^{\ttn} \tn K_b^{(n)}\left(S^{(n)}_{k-1}, dx\right)\varepsilon_0(dy)+ \tn K_a^{(n)}\left(S^{(n)}_{k-1}, dy\right)\varepsilon_0(dx)
\end{align*}
for $t\in [0,T]$. Equation \eqref{eq:Q} in Assumption \ref{ass:ExQ} will be our starting point  to construct a sequence of discrete-time integrators, which converges to a limit that is independent of the order book dynamics. Given the finite measures $Q_b, Q_a$ introduced in Assumption \ref{ass:ExQ}, for technical convenience we will assume for all $n\in\N$ the existence of two independent, homogeneous Poisson random measures $\mu^{Q,(n)}_I,\ I=b,a,$ on $\mathcal{B}^{(n)}$ with intensity measures $Q_I,\ I=b,a$, respectively.

For all $n \in \N$ we now consider the random jump measure
$$\mu^{J^{(n)}}\left(dt, dx, dy\right) :=\mu_b^{J^{(n)}}\left(dt, dx\right)\varepsilon_0(dy)+\mu_a^{J^{(n)}}\left(dt, dy\right)\varepsilon_0(dx),$$
where for $I=b,a$, $t\in[0,T]$, and $A\in\mathcal{B}([-M,M])$ we define 
\begin{align}\label{def:measJn}
\begin{aligned}
\mu_I^{J^{(n)}}&\left([0,t], A\right) :=\sum_{k=1}^{\ttn}\sum_{j:\ x_j^{(n)}\in A}\mu^{\eta,(n)}_I\Big(\Big[\t_k, \t_{k+1}\Big), \left\{\theta^{(n)}_I\big(S^{(n)}_{k-1},x_j^{(n)}\big)\right\}\Big)\\
&\quad+\sum_{k=1}^{\ttn}\sum_{j:\ x_j^{(n)}\in A}
\mu^{Q,(n)}_I\left(\Big[\t_k, \t_{k+1}\Big),  \Big[x_{j}^{(n)},x_{j+1}^{(n)}\Big)\right)\1\Big(\theta^{(n)}_I\big(S_{k-1}^{(n)},x_j^{(n)}\big)=0\Big).
\end{aligned}
\end{align} 
Then the compensator of $\mu^{J^{(n)}}$ is given by
$$\nu^{J^{(n)}}\left(dt, dx, dy\right) :=\nu_b^{J^{(n)}}\left(dt, dx\right)\varepsilon_0(dy)+\nu_a^{J^{(n)}}\left(dt, dy\right)\varepsilon_0(dx),$$
with 
\begin{align}\label{def:compJn}
\begin{aligned}
 \nu_I^{J^{(n)}}([0,t], A)&:= \sum_{k=1}^{\ttn}\sum_{j:\ x_j^{(n)}\in A}\nu^{\eta,(n)}_I\Big(\left[\t_k, \t_{k+1}\right), \left\{\theta^{(n)}_I\Big(S^{(n)}_{k-1},x_j^{(n)}\Big)\right\}\Big)\\
&\qquad+\sum_{k=1}^{\ttn}\tn
\sum_j \1\Big(\theta^{(n)}_I(S^{(n)}_{k-1},x_j^{(n)})=0\Big)Q_I\Big(\left[x_{j}^{(n)},x_{j+1}^{(n)}\right)\Big)
\end{aligned}
\end{align}
being the compensator of $\eta^{J^{(n)}}_I$ for $I=b,a$ and $(t,A)\in[0,T]\times\mathcal{B}([-M,M])$. Now, as stated in Lemma \ref{lem:comLJ} below, we found a representation of $B^{\eta,\ell,(n)},$ $A^{\eta,\ell,(n)}$ in terms of the coefficient functions $\theta^{(n)}_b,$ $\theta^{(n)}_a$ introduced in Assumption \ref{ass:ExQ} and the random jump measures $\mu^{J^{(n)}}_b,$ $\mu^{J^{(n)}}_a$ introduced in \eqref{def:measJn}.

\begin{Lem}\label{lem:comLJ}
Let Assumption \ref{ass:Prob} and \ref{ass:ExQ} hold. Then, 
\begin{align}\label{eq:decLJ}
\begin{aligned}
B^{\eta,\ell,(n)}(t) &= \int_0^t \int_{[-M,M]} \theta^{(n)}_b(\eta^{(n)}(u-), y) \, \mu^{J^{(n)}}_b(du, dy)\quad \text{a.s.}\\
A^{\eta,\ell,(n)}(t) &= \int_0^t \int_{[-M,M]} \theta^{(n)}_a(\eta^{(n)}(u-),y )\, \mu^{J^{(n)}}_a(du, dy)\quad \text{a.s.}
\end{aligned}
\end{align}
\end{Lem}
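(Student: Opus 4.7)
The plan is to establish the identity for $B^{\eta,\ell,(n)}$; the argument for $A^{\eta,\ell,(n)}$ is entirely symmetric. I would split the integral against $\mu^{J^{(n)}}_b$ along the two pieces appearing in the definition \eqref{def:measJn} and compute each piece explicitly on the grid $\xn\Z$. Since $\eta^{(n)}$ is constant on $[\t_{k-1},\t_k)$ and $\eta^{(n)}(\t_k-)=S^{(n)}_{k-1}$, the predictable state entering the integrand on the atoms contributed by the $k$-th interval is $S^{(n)}_{k-1}$, matching the state already hard-coded into \eqref{def:measJn}.

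For the contribution coming from the auxiliary Poisson measure $\mu^{Q,(n)}_b$, any atom retained in $\mu^{J^{(n)}}_b$ at spatial location $x_j^{(n)}$ within $[\t_k,\t_{k+1})$ carries the indicator $\1(\theta^{(n)}_b(S^{(n)}_{k-1},x_j^{(n)})=0)$. The integrand evaluated at such a retained atom therefore reduces to $\theta^{(n)}_b(S^{(n)}_{k-1},x_j^{(n)})$, which vanishes precisely because the very indicator that kept the atom equals one. Hence this piece contributes zero to the stochastic integral almost surely. For the contribution coming from $\mu^{\eta,(n)}_b$, the defining sum places a unit atom at $(\t_k,x_j^{(n)})$ exactly when $\delta B^{(n)}_k$ is a large jump satisfying $\delta B^{(n)}_k=\theta^{(n)}_b(S^{(n)}_{k-1},x_j^{(n)})$. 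By Assumption \ref{ass:Prob} ii), on the event $\{|\delta B^{(n)}_k|>\delta_n\}$ the realised jump $\delta B^{(n)}_k$ lies in the support of $K^{(n)}_b(S^{(n)}_{k-1},\cdot)$, so Assumption \ref{ass:ExQ} v) supplies a unique index $j\in\Z^{(n)}_M$ with $\delta B^{(n)}_k=\theta^{(n)}_b(S^{(n)}_{k-1},x_j^{(n)})$. Plugging this $j$ into the integrand returns exactly $\delta B^{(n)}_k$, and summing over $k=1,\ldots,\ttn$ reconstructs $B^{\eta,\ell,(n)}(t)=\sum_{k=1}^{\ttn}\delta B^{(n)}_k\1(|\delta B^{(n)}_k|>\delta_n)$.

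The only delicate point in this verification is that Assumption \ref{ass:ExQ} v) must be invoked at the predictable state $S^{(n)}_{k-1}$, which is exactly why the construction of $\mu^{J^{(n)}}_b$ employs $S^{(n)}_{k-1}$ both in the argument of $\theta^{(n)}_b$ and in the vanishing indicator inside \eqref{def:measJn}. Once this bookkeeping is in place, the identity reduces to an essentially deterministic computation on the $\xn\Z$-grid, and the argument for $A^{\eta,\ell,(n)}$ proceeds mutatis mutandis.
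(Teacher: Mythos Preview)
Your proposal is correct and follows essentially the same route as the paper: both arguments split the integral according to the two summands in \eqref{def:measJn}, observe that the $\mu^{Q,(n)}_b$-contribution vanishes because the retained atoms carry the indicator $\1(\theta^{(n)}_b(S^{(n)}_{k-1},x_j^{(n)})=0)$, and then invoke Assumption~\ref{ass:ExQ}~v) to reindex the $\mu^{\eta,(n)}_b$-contribution and recover $B^{\eta,\ell,(n)}(t)$. The paper's write-up is slightly more compressed (it performs the reindexing as a change of summation variable rather than arguing via the support of $K^{(n)}_b$), but the logical content is identical.
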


 For all $n \in \N$, we construct a stochastic process $X^{(n)}$, indexed by $[0,T] \times C_b([-M,M]^2)$, in the following way: for all $t\in [0,T]$, $g \in C_b([-M,M]^2)$, $I = b,a,$ we set
\begin{equation}\label{def:Xn}
X^{(n)}(t, g) := \int_{[-M,M]^2} g(x,y)\, \mu^{J^{(n)}}([0,t], dx, dy).
\end{equation}
The following proposition proves the convergence of the $(X^{(n)})_{n\in\N}$ and thereby shows that the sequences $(\mu^{J^{(n)}}_I)_{n\in\N},$ $I=b,a$, converge to two independent, homogeneous Poisson random measures.

\newpage
 
\begin{Prop}\label{res:limitLJ}
	Assume that Assumptions \ref{ass:Prob} and \ref{ass:ExQ} are satisfied. Then for any $m\in\N$ and any $g_1, \cdots, g_m\in C_b([-M,M]^2)$ it holds that
	\begin{equation}\label{eq:convXn}
	\left(X^{(n)}(\cdot, g_1), \cdots, X^{(n)}(\cdot, g_m)\right) \Rightarrow \left(X(\cdot, g_1), \cdots, X(\cdot, g_m)\right)
	\end{equation}
	in $\mathcal{D}(\R^{m}; [0,T])$ with 
	\[X(t, g) := \int_{[-M,M]^2} g(x,0)\, \mu^{Q}_b([0,t], dy)+ g(0,y)\, \mu^{Q}_a([0,t], dy), \quad g \in C_b([-M,M]^2),\ t\in [0,T],\]
	where  $\mu^{Q}_I,\ I=b,a,$ are independent, homogeneous Poisson random measures with intensity measures given by $\lambda\otimes Q_I,\ I=b,a$, respectively.
\end{Prop}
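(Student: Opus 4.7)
The plan is to regard each $X^{(n)}(\cdot, g)$ as a real-valued pure-jump semimartingale with jumps uniformly bounded by $\|g\|_\infty$, whose predictable compensator is read off from \eqref{def:compJn}, and to show that this compensator converges in probability (uniformly on $[0,T]$) to the deterministic compensator of the limit $X(\cdot, g)$. Since $\mu^{Q}_b$ and $\mu^{Q}_a$ are independent homogeneous Poisson random measures, $X(\cdot, g)$ is a L\'evy process with deterministic characteristics, and one can then invoke a standard semimartingale convergence theorem with deterministic limit characteristics (e.g.\ Jacod--Shiryaev, Theorem IX.3.39).

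The core step is to prove that for every $g\in C_b([-M,M]^2)$,
\[\int g\, d\nu^{J^{(n)}}([0,t],\cdot,\cdot) \ \xrightarrow{P}\ t\int g(x,0)\,Q_b(dx) + t\int g(0,y)\,Q_a(dy),\]
uniformly on $[0,T]$. For each $k$ the integrand under the sum in \eqref{def:compJn} splits into two pieces: one coming from the compensator of the $\mu^{\eta,(n)}$-jumps and one from the auxiliary Poisson random measures $\mu^{Q,(n)}_I$. The bijectivity statement in Assumption \ref{ass:ExQ} v) rewrites the first piece as a $K_I^{(n)}$-mass on the image of the grid under $\theta_I$, up to a discretization error absorbed by the uniform continuity of $g$; Assumption \ref{ass:ExQ} iii) then replaces $K_I^{(n)}$ by $K_I$ uniformly in $s\in E$; the algebraic identity \eqref{eq:Q} substitutes this $K_I$-mass by $Q_I$ minus its restriction to the zero-set of $\theta_I$; and Assumption \ref{ass:ExQ} vi) identifies this zero-set restriction with the contribution of $\mu^{Q,(n)}_I$, again uniformly in $s$. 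Summing over $k\leq \ttn$ and multiplying by $\tn$ produces the claimed limit.

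Having established compensator convergence, and noting that the jumps of $X^{(n)}(\cdot, g)$ are bounded by $\|g\|_\infty$ so that the quadratic-variation condition reduces to the same type of compensator convergence applied to $g^2$ (and the ``jump part'' condition to the same argument applied to $f\circ g$ for any bounded continuous $f$ vanishing near $0$), one concludes $X^{(n)}(\cdot, g)\Rightarrow X(\cdot, g)$ in $\mathcal{D}([0,T];\R)$. For the joint statement \eqref{eq:convXn}, one applies the same reasoning to arbitrary linear combinations $h:=\sum_{i=1}^m c_i g_i\in C_b([-M,M]^2)$: this gives convergence of all finite-dimensional distributions of the $\R^m$-valued process, while joint tightness in $\mathcal{D}(\R^m;[0,T])$ is immediate since all $m$ components are pure-jump processes with simultaneous jumps driven by the \emph{same} integer-valued random measure $\mu^{J^{(n)}}$.

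The main obstacle is the uniform-in-$s$ gluing in the compensator step: the two pieces in \eqref{def:compJn} individually depend on the order-book state through $K_I^{(n)}$ and $\theta_I^{(n)}$, and only their \emph{sum} converges to a state-independent limit. This is precisely what Assumption \ref{ass:ExQ} -- and in particular the algebraic identity \eqref{eq:Q} combined with iii) and vi) -- is designed to guarantee; it is the structural input that makes a jump-diffusion (as opposed to a more general state-dependent semimartingale) limit possible.
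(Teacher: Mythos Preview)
Your proposal is correct and follows the same route as the paper: both reduce to showing that $\int g\,d\nu^{J^{(n)}}([0,t],\cdot,\cdot)$ converges to $t\int g(x,0)\,Q_b(dx)+t\int g(0,y)\,Q_a(dy)$ via exactly the chain you describe (Lemma~\ref{lem:theta} from Assumption~\ref{ass:ExQ}~v), then~iii), then the identity~\eqref{eq:Q}, then~vi)). The only difference is cosmetic: the paper invokes Theorem~2.6 of Kurtz--Protter~\cite{KurtzProtter2}, which is tailored to Poisson random measure limits and delivers the multidimensional statement~\eqref{eq:convXn} in one stroke, in place of your Jacod--Shiryaev reference plus the separate Cram\'er--Wold/tightness argument.
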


Now, let us turn to the time and volume dynamics $\tau^{\eta,(n)}$, $v^{\eta,(n)}_b$, and $v^{\eta,(n)}_a.$ First note, that they can be rewritten for $k=0,\dots,T_n$ and $x\in\R$ as follows:
\begin{align*}
    \tau^{(n)}_k &= \tn \sum_{j=1}^k \varphi^{(n)}(S^{(n)}_{j-1}) + R^{(n)}_{\varphi,k},\\
    v^{(n)}_{b,k}(x) &= v^{(n)}_{b,0}\left(x-\left(B^{(n)}_{k} - B^{(n)}_0\right)\right) + \vn \sum_{j=1}^k f^{(n)}_b\left[S^{(n)}_{j-1}\right] \left(x - \left(B^{(n)}_k - B^{(n)}_{j-1}\right)\right) + R^{(n)}_{b,k}(x),\\
    v^{(n)}_{a,k}(x) &= v^{(n)}_{a,0}\left(x+ A^{(n)}_{k} - A^{(n)}_0\right) + \vn \sum_{j=1}^k f^{(n)}_a\left[S^{(n)}_{j-1}\right]\left(x + A^{(n)}_k - A^{(n)}_{j-1}\right) + R^{(n)}_{a,k}(x),
\end{align*}
where
\begin{align}\label{def:remainder}
\begin{aligned}
 R^{(n)}_{b,k}(x) &:= \vn \sum_{j=1}^{k} \left(M^{(n)}_{b,j}-f^{(n)}_b[S^{(n)}_{j-1}]\right)\left(x-\left(B^{(n)}_k - B^{(n)}_{j-1}\right)\right),\\
 R^{(n)}_{a,k}(x) &:= \vn \sum_{j=1}^{k} \left(M^{(n)}_{a,j}-f^{(n)}_a[S^{(n)}_{j-1}]\right)\left(x+A^{(n)}_k - A^{(n)}_{j-1}\right),
\end{aligned}
\end{align}
and 
\begin{align}\label{def:deltaVarphi}
    R^{(n)}_{\varphi,k} := \tn \sum_{j=1}^k \left(\varphi^{(n)}_j - \varphi^{(n)}(S^{(n)}_{j-1})\right).
\end{align}
Let us denote $R^{(n)}_I(t) := R^{(n)}_{I,k},$ $I = b,a,\varphi$, if $\t_k \leq t < \t_{k+1}$. According to the next proposition, the random fluctuations of the time and volume dynamics vanish in the high-frequency limit.

\begin{Prop}\label{res:conRemainder}
Under Assumptions \ref{ass:randTimes}, \ref{ass:probVol}, and \ref{ass:Scaling}, we have
\[\E\left[\sup_{k\leq T_n} \left|R^{(n)}_{\varphi,k}\right|^2\right]\rightarrow 0 \quad \text{ and } \quad \E \left[\sup_{k\leq T_n} \left\|R^{(n)}_{I,k}\right\|^2_{L^2}\right] \rightarrow 0 \quad  \text{ for } I = b,a.\]
In particular, the $L^2(\R)$--valued processes $R^{(n)}_I=(R^{(n)}_I(t))_{t\in[0,T]},\ I=b,a,$ and the $[0,T]$--valued process $R^{(n)}_\varphi=(R^{(n)}_\varphi(t))_{t\in[0,T]}$ converge weakly in the Skorokhod topology to the zero process.
\end{Prop}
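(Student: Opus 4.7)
The plan is to identify $R^{(n)}_\varphi$ and (up to a translation trick) $R^{(n)}_b, R^{(n)}_a$ as discrete-time martingales with respect to $(\F_k)$, then apply Doob's $L^2$ maximal inequality together with the orthogonality of martingale differences, and finally invoke Assumption \ref{ass:Scaling} to push the resulting upper bounds to zero. Since convergence of $\E[\sup_{t\leq T}\|R^{(n)}_I(t)\|^2_{L^2}]$ (resp.\ $\E[\sup_{t\leq T}|R^{(n)}_\varphi(t)|^2]$) to zero implies uniform convergence in probability of the càdlàg piecewise-constant interpolations to the zero process, the weak convergence in the Skorokhod topology will follow immediately from the first claim.

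For $R^{(n)}_\varphi$ the argument is direct: by Assumption \ref{ass:randTimes} ii), the increment $\tn(\varphi^{(n)}_j-\varphi^{(n)}(S^{(n)}_{j-1}))$ is a martingale difference. Doob's inequality and orthogonality then give
$$\E\left[\sup_{k\leq T_n}\big|R^{(n)}_{\varphi,k}\big|^2\right]\leq 4(\tn)^2\sum_{j=1}^{T_n}\E\!\left[(\varphi^{(n)}_j)^2\right]\leq 4\,\tn\,T\cdot\sup_{j,n}\E[(\varphi^{(n)}_j)^2],$$
which tends to $0$ by Assumption \ref{ass:randTimes} i) and $\tn\to 0$.

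The volume remainder $R^{(n)}_b$ is \emph{not} itself a martingale in $k$, because the shift $B^{(n)}_k-B^{(n)}_{j-1}$ inside each summand depends on the terminal index $k$, which is not $\F_j$-measurable. The key observation needed to bypass this is that the $L^2(\R)$-norm is translation invariant. Setting $D^{(n)}_{b,j}:=M^{(n)}_{b,j}-f^{(n)}_b[S^{(n)}_{j-1}]$, Assumption \ref{ass:probVol} ii) yields $\E[D^{(n)}_{b,j}\,|\,\F_{j-1}]=0$ in $L^2(\R)$. Define the auxiliary process
$$\tilde{R}^{(n)}_{b,k}(x):=\vn\sum_{j=1}^{k}D^{(n)}_{b,j}\!\left(x+B^{(n)}_{j-1}\right),$$
which is a genuine $L^2(\R)$-valued martingale since $B^{(n)}_{j-1}$ is $\F_{j-1}$-measurable. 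A direct substitution gives $R^{(n)}_{b,k}(x)=\tilde{R}^{(n)}_{b,k}(x-B^{(n)}_k)$, hence $\|R^{(n)}_{b,k}\|_{L^2}=\|\tilde{R}^{(n)}_{b,k}\|_{L^2}$ by translation invariance. Doob's $L^2$-inequality for Hilbert-space-valued martingales and orthogonality of the increments in $L^2(\Omega;L^2(\R))$ then yield
$$\E\left[\sup_{k\leq T_n}\big\|R^{(n)}_{b,k}\big\|^2_{L^2}\right]\leq 4(\vn)^2\sum_{j=1}^{T_n}\E\!\left[\|D^{(n)}_{b,j}\|^2_{L^2}\right].$$
A direct calculation shows $\|M^{(n)}_{b,j}\|^2_{L^2}=(\omega^{(n)}_j)^2/\xn\cdot\1(\phi^{(n)}_j=B)$, and $\sup_{s,n}\|f^{(n)}_b[s]\|_{L^2}<\infty$ by Assumption \ref{ass:probVol} iii); combined with Assumption \ref{ass:probVol} i) this gives $\E[\|D^{(n)}_{b,j}\|^2_{L^2}]\leq C/\xn$ uniformly in $j,n$. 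Using $T_n\leq T/\tn$ and $\vn/\tn\to 1$, the right-hand side is of order $\tn/\xn$, which vanishes by Assumption \ref{ass:Scaling}. The bound for $R^{(n)}_a$ follows by the symmetric argument, defining $\tilde{R}^{(n)}_{a,k}(x):=\vn\sum_{j=1}^{k}D^{(n)}_{a,j}(x-A^{(n)}_{j-1})$ so that $R^{(n)}_{a,k}(x)=\tilde{R}^{(n)}_{a,k}(x+A^{(n)}_k)$.

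The main technical obstacle is the translation-invariance trick: without reparameterising by the $\F_{j-1}$-measurable shift $B^{(n)}_{j-1}$ in place of the future-dependent shift $B^{(n)}_k$, one cannot invoke the martingale/orthogonality machinery directly, and the sum would look hopelessly entangled across time. Once this reduction is made, the remaining steps are routine: Doob, orthogonality, the elementary moment bounds from Assumption \ref{ass:probVol}, and the scaling $\tn/\xn\to 0$ combine to close the argument.
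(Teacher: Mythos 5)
Your proposal is correct and follows essentially the same route as the paper: the same translation by the $\F_{j-1}$-measurable shift $B^{(n)}_{j-1}$ to turn $R^{(n)}_{b,k}$ into an $L^2(\R)$-valued martingale with the same norm, followed by a maximal $L^2$ estimate (the paper invokes its Lemma \ref{techL:L2est}, which is exactly your Doob-plus-orthogonality bound for Hilbert-space-valued martingales) and the computation $\|M^{(n)}_{b,j}\|_{L^2}^2=(\omega^{(n)}_j)^2/\xn\cdot\1(\phi^{(n)}_j=B)$ leading to a bound of order $\tn/\xn$. No gaps.
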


\newpage

Altogether, we can write the microscopic LOB dynamics as a stochastic difference equation, i.e.~for all $x\in\R$ and $k=0,\dots,T_n$,
\begin{align*}
    B^{(n)}_k &= B^{(n)}_0 + \sum_{j=1}^{k} \Bigg(p^{(n)}_b(S^{(n)}_{j-1}) \tn + r^{(n)}_b(S^{(n)}_{j-1}) \delta Z^{(n)}_{b,j}\\
    &\hspace{3cm} + \int_{-M}^M \theta^{(n)}_b(S^{(n)}_{j-1},y) \mu^{J^{(n)}}_b\left(\left[\t_{j}, \t_{j+1}\right), dy\right)\Bigg),\\
    v^{(n)}_{b,k}(x) &= v^{(n)}_{b,0}\left(x-\left(B^{(n)}_{k} - B^{(n)}_0\right)\right) + \vn \sum_{j=1}^k f^{(n)}_b\left[S^{(n)}_{j-1}\right] \left(x - \left(B^{(n)}_k - B^{(n)}_{j-1}\right)\right) + R^{(n)}_{b,k}(x),\\
    A^{(n)}_k &= A^{(n)}_0 + \sum_{j=1}^{k} \Bigg(p^{(n)}_a(S^{(n)}_{j-1}) \tn + r^{(n)}_a(S^{(n)}_{j-1}) \delta Z^{(n)}_{a,j}\\
    &\hspace{3cm} + \int_{-M}^M \theta^{(n)}_a(S^{(n)}_{j-1},y) \mu^{J^{(n)}}_a\left(\left[\t_{j}, \t_{j+1}\right), dy\right)\Bigg),\\
    v^{(n)}_{a,k}(x) &= v^{(n)}_{a,0}\left(x+ A^{(n)}_{k} - A^{(n)}_0\right) + \vn \sum_{j=1}^k f^{(n)}_a\left[S^{(n)}_{j-1}\right] \left(x + A^{(n)}_k - A^{(n)}_{j-1}\right) + R^{(n)}_{a,k}(x),\\
\tau^{(n)}_k &= \tn \sum_{j=1}^k \varphi^{(n)}(S^{(n)}_{j-1}) + R^{(n)}_{\varphi,k},
\end{align*}
and we have shown in Proposition \ref{res:limitSJ}, Proposition \ref{res:limitLJ}, and Proposition \ref{res:conRemainder} that its integrators converge to limit processes that do not depend on the order book dynamics.

\subsubsection*{Step 3: A first approximation of the microscopic state process $\eta^{(n)}$}

Proposition \ref{res:conRemainder} suggests, that the fluctuations of the time and volume dynamics vanish in the high-frequency limit. Based on this observation, we introduce a new sequence $\tilde{S}^{(n)}$ of order book models in which the random innovations $M^{(n)}_{I,k},\ k\in\N$, are replaced by the approximations $f_I[\tilde{S}^{(n)}_{k-1}],\ k\in\N$, and $v^{(n)}_{I,0}$ is replaced by its limit $v_{I,0}$, for both $I = b,a$. Moreover, each $\varphi^{(n)}_k$ is replaced by the approximation $\varphi(\tilde{S}^{(n)}_{k-1})$ and we replace $\vn$ by $\tn$ as they are of the same order by Assumption \ref{ass:Scaling}. Therefore, we define $\tilde{S}^{(n)}_k = (\tilde{B}^{(n)}_k, \tilde{v}^{(n)}_{b,k}, \tilde{A}^{(n)}_k, \tilde{v}^{(n)}_{a,k}, \tilde{\tau}^{(n)}_k)$ for $k=,0\dots,T_n$ as
\begin{align*}
  \tilde{B}^{(n)}_k &= B_0^{(n)} + \sum_{j=1}^k \Bigg(p^{(n)}_b\left(\tilde{S}^{(n)}_{j-1}\right) \tn + r^{(n)}_b\left(\tilde{S}_{j-1}^{(n)}\right)\delta Z^{(n)}_{b,j}\\
  &\hspace{5.5cm} + \int_{-M}^M\theta^{(n)}_b\left(\tilde{S}^{(n)}_{j-1}, y\right)\mu^{J^{(n)}}_b\left(\left[\t_{j}, \t_{j+1}\right), dy\right)\Bigg),\\
   \tilde{v}^{(n)}_{b,k}(x) &= v_{b,0}\left(x - \left(\tilde{B}^{(n)}_k - B^{(n)}_0\right)\right) + \sum_{j=1}^k f_b\left[\tilde{S}^{(n)}_{j-1}\right]\left(x - \left(\tilde{B}^{(n)}_k - \tilde{B}^{(n)}_{j-1}\right)\right) \tn,
   \end{align*}
   \begin{align*}
   \tilde{A}^{(n)}_k &= A_0^{(n)} + \sum_{j=1}^k \Bigg(p^{(n)}_a\left(\tilde{S}^{(n)}_{j-1}\right) \tn + r^{(n)}_a\left(\tilde{S}_{j-1}^{(n)}\right)\delta Z^{(n)}_{a,j}\\
   &\hspace{5.5cm}  + \int_{-M}^M\theta^{(n)}_a\left(\tilde{S}^{(n)}_{j-1}, y\right)\mu^{J^{(n)}}_a\left(\left[\t_{j}, \t_{j+1}\right), dy\right)\Bigg),\\
  \tilde{v}^{(n)}_{a,k}(x) &= v_{a,0}\left(x + \tilde{A}^{(n)}_k - A^{(n)}_0\right) + \sum_{j=1}^k f_a\left[\tilde{S}^{(n)}_{j-1}\right]\left(x+ \tilde{A}^{(n)}_k - \tilde{A}^{(n)}_{j-1}\right) \tn,\\
   \tilde{\tau}^{(n)}_k &= \sum_{j=1}^k \varphi\left(\tilde{S}^{(n)}_{j-1}\right) \tn.
  \end{align*}
For all $n\in \N,$ we denote $\tilde{\eta}^{(n)}(t) :=\tilde{S}^{(n)}_k,$ if $\t_k \leq t < \t_{k+1}.$ The next proposition states that the interpolated state process $\eta^{(n)}$ is approximately equal to $\tilde{\eta}^{(n)}$ as $n \rightarrow \infty$. 

\begin{Prop}\label{res:convApprox}
If Assumptions \ref{ass:IV}--\ref{ass:Scaling} are satisfied, then
\[ \E \left[\sup_{k\leq T_n} \left\|S^{(n)}_k - \tilde{S}^{(n)}_k\right\|_{E}^2\right] \rightarrow 0 \quad \text{ as } n \rightarrow \infty.\]
In particular, the process $\eta^{(n)}- \tilde{\eta}^{(n)}$ converges weakly in the Skorokhod topology to the zero process.
\end{Prop}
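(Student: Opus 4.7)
The plan is to derive a recursive estimate of Gronwall type for
\[a_k^{(n)} := \E\Big[\sup_{j \leq k}\big\|S^{(n)}_j - \tilde{S}^{(n)}_j\big\|_E^2\Big],\]
namely
\[a_k^{(n)} \leq C\tn \sum_{j=1}^{k} a_{j-1}^{(n)} + \varepsilon_n,\]
with $\varepsilon_n \to 0$, and then apply the discrete Gronwall lemma to obtain $a_{T_n}^{(n)} \leq \varepsilon_n e^{CT} \to 0$. The error term $\varepsilon_n$ collects the remainder contributions $R^{(n)}_b$, $R^{(n)}_a$, $R^{(n)}_\varphi$ vanishing by Proposition \ref{res:conRemainder}, the initial-condition error from \eqref{ass:Lconv}, the uniform convergence errors from Assumptions \ref{ass:randTimes}, \ref{ass:probVol}, \ref{ass:limitFct}, and \ref{ass:ExQ}, as well as the scaling mismatch $|\vn-\tn|$ from Assumption \ref{ass:Scaling}. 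The weak convergence claim for $\eta^{(n)}-\tilde\eta^{(n)}$ then follows directly from the uniform $L^2$ bound.

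\textbf{Price estimate.} Writing $\delta B_j^{(n)}-\delta \tilde B_j^{(n)}$ as the sum of a drift, a diffusive, and a jump increment, I would estimate each part separately. For the drift, the asymptotic Lipschitz bound
\[\big|p_b^{(n)}(s)-p_b^{(n)}(\tilde s)\big|\leq L\|s-\tilde s\|_E+2\sup_{u\in E}\big|p_b^{(n)}(u)-p_b(u)\big|,\]
obtained by triangle inequality from Assumption \ref{ass:limitFct}, together with Cauchy--Schwarz, produces a Gronwall sum plus a vanishing error. The diffusive difference $(r_b^{(n)}(S_{j-1}^{(n)})-r_b^{(n)}(\tilde S_{j-1}^{(n)}))\delta Z_{b,j}^{(n)}$ is a martingale increment with respect to $(\F_j)$, so Doob's maximal inequality combined with $\E[(\delta Z_{b,j}^{(n)})^2\mid \F_{j-1}]\leq \tn$ reduces the control to $C\tn \sum_{j}\E[\|S_{j-1}^{(n)}-\tilde S_{j-1}^{(n)}\|_E^2]$ up to a vanishing remainder. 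The large-jump term is treated analogously via the compensator $\nu^{J^{(n)}}_b$, using the Lipschitz-in-state property of $\theta_b^{(n)}$ uniformly in $y$ (Assumption \ref{ass:ExQ}(iv)) and the discretization bound $|\theta_b^{(n)}-\theta_b|\leq \xn$ from \eqref{def:thetaN1}. The ask-price estimate is identical.

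\textbf{Volume estimate and main obstacle.} I decompose
\[v_{b,k}^{(n)}-\tilde v_{b,k}^{(n)}=\Big(v_{b,0}^{(n)}\big(\cdot-(B^{(n)}_k-B_0^{(n)})\big)-v_{b,0}\big(\cdot-(\tilde B^{(n)}_k-B_0^{(n)})\big)\Big)+I_1+R^{(n)}_{b,k}.\]
The first piece is bounded in $L^2$ by $\|v_{b,0}^{(n)}-v_{b,0}\|_{L^2}+L|B_k^{(n)}-\tilde B_k^{(n)}|$ thanks to Assumption \ref{ass:IV}, and $R^{(n)}_{b,k}$ vanishes in $L^2$ by Proposition \ref{res:conRemainder}. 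In $I_1$, each summand is split by three intermediate terms into a scaling error $|\vn-\tn|\,\|f_b^{(n)}\|_\infty$, a function-convergence error $\tn\sup_{s}\|f_b^{(n)}[s]-f_b[s]\|_{L^2}$, a state-Lipschitz term $\tn L\|S_{j-1}^{(n)}-\tilde S_{j-1}^{(n)}\|_E$, and a shift-Lipschitz term $\tn L\big(|B_k^{(n)}-\tilde B_k^{(n)}|+|B_{j-1}^{(n)}-\tilde B_{j-1}^{(n)}|\big)$ exploiting Assumption \ref{ass:probVol}(iii). Summed over $j$, the last piece contributes $TL|B_k^{(n)}-\tilde B_k^{(n)}|$, coupling the volume bound to the current price difference; this is the main obstacle. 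It is resolved by combining the price and volume estimates: the resulting constant $1+CT^2L^2$ in front of $|B_k^{(n)}-\tilde B_k^{(n)}|^2$ is harmless because that quantity is already dominated by the Gronwall sum from the price estimate, so substituting the latter into the volume bound recovers the pure Gronwall structure in $\|S^{(n)}_k-\tilde S^{(n)}_k\|_E^2$.

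\textbf{Conclusion.} The time difference $\tau_k^{(n)}-\tilde\tau_k^{(n)}=R^{(n)}_{\varphi,k}+\tn\sum_{j=1}^k (\varphi^{(n)}(S_{j-1}^{(n)})-\varphi(\tilde S_{j-1}^{(n)}))$ is controlled similarly, using the Lipschitz property of $\varphi$, the uniform convergence of $\varphi^{(n)}$ to $\varphi$, and Proposition \ref{res:conRemainder}. Assembling the price, volume, and time bounds, taking $\sup_{k'\leq k}$ and expectation, and invoking the discrete Gronwall lemma yields $a_{T_n}^{(n)}\to 0$, which by the piecewise-constant construction of $\eta^{(n)}$ and $\tilde\eta^{(n)}$ implies weak convergence of their difference to the zero process in the Skorokhod topology.
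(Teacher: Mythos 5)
Your proposal is correct, and the technical core coincides with the paper's: Doob's inequality for the diffusive martingale difference, a Burkholder--Davis--Gundy bound for the compensated large-jump difference plus a pathwise bound for the compensator integral, the asymptotic Lipschitz estimates obtained by combining the Lipschitz continuity of the limit coefficients with their uniform convergence, the coupling of the volume error to the price error through the shift in the location variable, and a discrete Gronwall closure. The difference is organizational: the paper routes the argument through two auxiliary processes $\bar{S}^{(n)}$ (which isolates the vanishing errors coming from Proposition \ref{res:conRemainder}, the initial conditions, and the coefficient convergence) and $\hat{S}^{(n)}$ (whose martingale parts are built from $\tilde{S}^{(n)}$ so that the comparison $\hat{S}^{(n)}$ vs.\ $\tilde{S}^{(n)}$ becomes purely pathwise, while all stochastic-maximal-inequality work is confined to $S^{(n)}$ vs.\ $\hat{S}^{(n)}$), and then runs Gronwall on $\E[\sup_{k\leq i}\|S^{(n)}_k-\hat{S}^{(n)}_k\|_E^2]$; you run a single Gronwall directly on $a_i^{(n)}=\E[\sup_{k\leq i}\|S^{(n)}_k-\tilde{S}^{(n)}_k\|_E^2]$, which works because the price estimates control $\E[\sup_{k\leq i}|B^{(n)}_k-\tilde{B}^{(n)}_k|^2]$ by $\tn\sum_{j\leq i}a_{j-1}^{(n)}+\varepsilon_n$ using only states up to time $j-1$, so no circularity arises when you substitute it into the volume bound. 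Your version is leaner; the paper's buys a clean separation between $L^2$-martingale estimates and pathwise estimates at the cost of two extra processes. One ingredient you should make explicit rather than subsume under ``treated analogously via the compensator'': both the BDG step and the compensator-drift step require the uniform bound $\nu_I^{J^{(n)}}([\t_j,\t_{j+1}),[-M,M])\leq \tn\,(Q_I([-M,M])+o(1))$, which is not immediate from Assumption \ref{ass:Prob} ii) alone but follows from Assumption \ref{ass:ExQ} ii)--iii) (this is the content of Remark \ref{rmk:jumpEstimate} in the paper); without it the sum over $j\leq T_n$ of the jump contributions would not close to an $O(\tn\sum_j\cdot)$ term.
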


\subsubsection*{Step 4: Limit theorem for the state process with respect to absolute volumes}

Note that the dynamics of $\tilde{v}^{\eta,(n)}_{b}$ and $\tilde{v}^{\eta,(n)}_a$ defined above are not given in standard semimartingale form due to the time dependent shift in the $x$-variable. This prevents us from directly applying convergence results for infinite dimensional semimartingales. To overcome this issue, we first proof an intermediate convergence result for the discrete-time order book sequence with respect to absolute volumes. To this end, we define the approximated \textit{absolute} volume dynamics by
\begin{equation}\label{def:absVol}
    \tilde{u}^{(n)}_{b,k}(x) := \tilde{v}^{(n)}_{b,k}\left(-x + \tilde{B}^{(n)}_k\right), \quad \tilde{u}^{(n)}_{a,k}(x) := \tilde{v}^{(n)}_{a,k}\left(x - \tilde{A}^{(n)}_k\right), \quad k = 0,\cdots,T_n.
\end{equation}
Then we introduce $\tilde{S}^{(n),abs}_k := (\tilde{B}^{(n)}_k, \tilde{u}^{(n)}_{b,k}, \tilde{A}^{(n)}_k, \tilde{u}^{(n)}_{a,k}, \tilde{\tau}^{(n)}_k),\ k=0,\dots,T_n$, and note that a priori the coefficients in the dynamics of $\tilde{S}^{(n),abs}$ are still functions of  $\tilde{S}^{(n)}$ and hence in particular of the approximated \textit{relative} volume dynamics $\tilde{v}^{(n)}_{b}$ and $\tilde{v}^{(n)}_{a}$. For this reason, we introduce a shift operator $\psi: E \rightarrow E$ such that for all $s = (b,v,a,w,t) \in E,$
\[\psi(s):= \left(b, v(- (\cdot - b)), a, w(\cdot - a), t\right).\]
Then $\psi(\tilde{S}^{(n),abs}_k)=\tilde{S}^{(n)}_k$ for all $k=0,\dots,T_n$ and we can rewrite the dynamics of $\tilde{S}^{(n),abs}$ in such a way that all coefficient functions directly depend on $\tilde{S}^{(n),abs}$:
\begin{align*}
    \tilde{B}^{(n)}_k &= B^{(n)}_0 + \sum_{j=1}^k\Bigg(p^{(n)}_b\left(\psi(\tilde{S}^{(n),abs}_{j-1})\right) \tn + r^{(n)}_b\left(\psi(\tilde{S}^{(n),abs}_{j-1})\right) \delta Z^{(n)}_{b,j}\\
    &\hspace{3cm} + \int_{[-M,M]} \theta^{(n)}_b\left(\psi(\tilde{S}^{(n),abs}_{j-1}),y\right) \mu^{J^{(n)}}_b\left(\left[\t_j, \t_{j+1}\right) \times dy\right)\Bigg),\\
    \tilde{u}^{(n)}_{b,k}(x) &= v_0(-x + B^{(n)}_0) + \sum_{j=1}^{k}f_b\left[\psi(\tilde{S}^{(n),abs}_{j-1})\right](-x+ \tilde{B}^{(n)}_{j-1}) \tn,
    \end{align*}
    \begin{align*}
    \tilde{A}^{(n)}_k &= A^{(n)}_0 + \sum_{j=1}^k\Bigg(p^{(n)}_a\left(\psi(\tilde{S}^{(n),abs}_{j-1})\right) \tn + r^{(n)}_a\left(\psi(\tilde{S}^{(n),abs}_{j-1})\right) \delta Z^{(n)}_{a,j}\\
    &\hspace{3cm} + \int_{[-M,M]} \theta^{(n)}_a\left(\psi(\tilde{S}^{(n),abs}_{j-1}),y\right) \mu^{J^{(n)}}_a\left(\left[\t_j, \t_{j+1}\right) \times dy\right)\Bigg),\\
    \tilde{u}^{(n)}_{a,k}(x) &= v_0(x - A^{(n)}_0) + \sum_{j=1}^{k}f_a\left[\psi(\tilde{S}^{(n),abs}_{j-1})\right](x- \tilde{A}^{(n)}_{j-1}) \tn,\\
    \tilde{\tau}^{(n)}_k &= \sum_{j=1}^k \varphi\left(\psi(\tilde{S}^{(n),abs}_{j-1})\right).
    \end{align*}

For $t\in[0,T]$, we define its piecewise constant interpolation as $\tilde{\eta}^{(n),abs}(t) := \tilde{S}^{(n),abs}_k,$ if $\t_k \leq t < \t_{k+1}$. Now the following theorem shows the
convergence of $\tilde{\eta}^{(n),abs}.$ Its proof is an application of convergence results for infinite dimensional SDEs in Kurtz and Protter \cite{KurtzProtter2}.

\begin{The}\label{res:conAV}
Let Assumptions \ref{ass:IV}-\ref{ass:Scaling} be satisfied. Then the interpolation of the approximated LOB dynamics with respect to the absolute volume density functions $\tilde{\eta}^{(n),abs}$ converges weakly in the Skorokhod topology to $\eta^{abs}=(B^{\eta},u^{\eta}_b, A^{\eta}, u^{\eta}_a, \tau^{\eta})$ being the unique strong solution to the coupled SDE system
\begin{align}\label{eq:SDEabs}
\begin{aligned}
B^{\eta}(t) &= B_0 + \int_0^t p_b\left(\psi(\eta^{abs}(u))\right) du + \int_0^t r_b\left(\psi(\eta^{abs}(u))\right) dZ_b(u)\\
&\hspace{4cm} + \int_0^t \int_{[-M,M]} \theta_b\left(\psi(\eta^{abs}(u-)), y\right) \mu^Q_b(du, dy),\\
u^{\eta}_b(t,x) &= v_{b,0}(-x + B_0) + \int_0^t f_b\left[\psi(\eta^{abs}(u))\right]\left(-x+B^{\eta}(u)\right) du,\\
A^{\eta}(t) &= A_0 + \int_0^t p_a\left(\psi(\eta^{abs}(u))\right) du + \int_0^t r_a\left(\psi(\eta^{abs}(u))\right) dZ_a(u)\\
&\hspace{4cm} + \int_0^t \int_{[-M,M]} \theta_a\left(\psi(\eta^{abs}(u-)), y\right) \mu^Q_a(du, dy),\\
u^{\eta}_a(t,x) &= v_{a,0}(x - A_0) + \int_0^t f_a\left[\psi(\eta^{abs}(u))\right]\left(x-A^{\eta}(u)\right) du,\\
\tau^{\eta}(t) &:= \int_0^t \varphi(\psi(\eta^{abs}(u)))du
\end{aligned}
\end{align}
for all $(t,x)\in [0,T]\times\R$, where $Z_b, Z_a$ are two independent, standard Brownian motions and $\mu^Q_b, \mu^{Q}_a$ are two independent, homogeneous Poisson random measures with intensity measures $\lambda\otimes Q_b$ and $\lambda\otimes Q_a$, respectively, independent of $Z_b$ and $Z_a$.
\end{The}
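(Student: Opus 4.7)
\bigskip

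\noindent\textbf{Proof plan for Theorem \ref{res:conAV}.}

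The overall strategy is to cast the discrete dynamics of $\tilde{S}^{(n),abs}$ as an infinite-dimensional stochastic integral equation driven by the processes $(\tn\lfloor\cdot/\tn\rfloor, Z^{(n)}_b, Z^{(n)}_a, \mu^{J^{(n)}}_b, \mu^{J^{(n)}}_a)$ and then to apply the weak convergence results of Kurtz and Protter \cite{KurtzProtter2} for stochastic differential equations in Hilbert space driven by semimartingales indexed by function spaces. The first key input is the joint convergence of the driving noises: Proposition \ref{res:limitSJ} delivers $(Z^{(n)}_b,Z^{(n)}_a)\Rightarrow (Z_b,Z_a)$ (a planar Brownian motion), Proposition \ref{res:limitLJ} delivers the weak convergence of the $H^\#$-semimartingales $X^{(n)}$ to $X$ (which encodes two independent Poisson random measures with intensities $\lambda\otimes Q_b$, $\lambda\otimes Q_a$), and both are by construction carried by independent sources of randomness, so joint convergence follows.

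The second step is to verify the regularity of the coefficient functionals. For the composition with the shift operator $\psi$, one checks that $\psi:E\to E$ is an isometry (up to the $L^2$-shift, which preserves the norm), so that if $p_I, r_I, f_I,\varphi$ and $\theta_I(\cdot,y)$ are Lipschitz in $s\in E$ with a uniform constant (Assumptions \ref{ass:randTimes}, \ref{ass:probVol}, \ref{ass:limitFct}, \ref{ass:ExQ} iv)), then so are $p_I\circ\psi$, $r_I\circ\psi$, $f_I\circ\psi$, $\varphi\circ\psi$, $\theta_I(\psi(\cdot),y)$. The uniform convergence to the non-superscripted versions follows directly from the same assumptions and the fact that $\theta_I^{(n)}\to\theta_I$ uniformly on $E\times[-M,M]$ (by \eqref{def:thetaN1} and the equicontinuity part of Assumption \ref{ass:ExQ} i)). These properties, combined with the tightness/good-sequence criterion, allow Kurtz--Protter's Theorem 7.5 (or the natural Hilbert-space extension used also in \cite{HorstKreherDiffusion}) to conclude that $\tilde\eta^{(n),abs}$ converges weakly in the Skorokhod topology on $\mathcal{D}(E;[0,T])$ to any solution $\eta^{abs}$ of \eqref{eq:SDEabs}.

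The third step is existence and pathwise uniqueness of strong solutions to \eqref{eq:SDEabs}. Because the driving jump measures $\mu^Q_b,\mu^Q_a$ have finite intensity $Q_b([-M,M])+Q_a([-M,M])<\infty$, one can work between jumps, where the system reduces to a finite-dimensional SDE for $(B^\eta,A^\eta,\tau^\eta)$ coupled with a deterministic Volterra-type fluid equation for $(u^\eta_b,u^\eta_a)$ in $L^2(\R)$. Standard Picard iteration in the complete metric space $\mathcal{D}([0,t_0];E)$ with the norm
\[\|\xi\|_{t_0}:=\E\Bigl[\sup_{s\leq t_0}\|\xi(s)\|_E^2\Bigr]^{1/2}\]
together with the Lipschitz bounds on $p_I\circ\psi,r_I\circ\psi,f_I\circ\psi,\varphi\circ\psi$ and the Lipschitz continuity of $v_{b,0},v_{a,0}$ under translations (Assumption \ref{ass:IV}) yields a unique local strong solution; the compact support of $Q_b,Q_a$ and the boundedness of $\theta_b,\theta_a$ then permit pasting across the (a.s.\ finitely many in $[0,T]$) jump times, giving global existence and uniqueness.

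The main obstacle will be the simultaneous treatment of the diffusive and jump integrators in the infinite-dimensional setting and the verification that the Kurtz--Protter ``uniformly tight / good sequence'' conditions are met when the coefficients also depend (through $\psi$) on the evolving $L^2(\R)$-component. In particular, one must control the jump terms $\int_{[-M,M]}\theta_I^{(n)}(\psi(\tilde\eta^{(n),abs}(u-)),y)\mu^{J^{(n)}}_I(du,dy)$ using Assumption \ref{ass:ExQ} i), iii), vi) together with the observation that the jumps of $\mu^{J^{(n)}}_I$ have bounded support, so that the $C_b([-M,M]^2)$-valued convergence from Proposition \ref{res:limitLJ} can be leveraged via a standard approximation of $\theta_I(\psi(\cdot),\cdot)$ by continuous functions. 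Once these technical ingredients are in place, the uniqueness from the third step identifies the weak limit uniquely as $\eta^{abs}$, completing the proof.
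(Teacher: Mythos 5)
Your overall architecture (convergence of the integrators, Kurtz--Protter for the SDE, then uniqueness of the limit equation) matches the paper's, but there is one genuine gap at the heart of the argument: your claim that $\psi:E\to E$ is an isometry "up to the $L^2$-shift, which preserves the norm" is false, and this is precisely the difficulty the paper's proof is organized around. Translation invariance of the $L^2$-norm gives $\|v(\cdot-b)\|_{L^2}=\|v\|_{L^2}$, but the Lipschitz estimate you need is for the \emph{difference} $\|v_b(-(\cdot-\tilde b))-\tilde v_b(-(\cdot-b))\|_{L^2}$ with $b\neq\tilde b$, and for a general $v_b\in L^2(\R)$ the term $\|v_b(\cdot-\tilde b)-v_b(\cdot-b)\|_{L^2}$ is not controlled by $|b-\tilde b|$ at all (take $v_b$ a step function and $|b-\tilde b|$ small). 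Consequently $p_I\circ\psi$, $r_I\circ\psi$, $f_I\circ\psi$, $\theta_I(\psi(\cdot),y)$, $\varphi\circ\psi$ are \emph{not} Lipschitz on $E$, and both the Kurtz--Protter coefficient condition and your Picard/Gronwall uniqueness argument break down. The paper's fix is to restrict everything to the closed subspace $\tilde{E}\subset E$ of states whose volume components satisfy $\|v_I(\cdot+x)-v_I(\cdot+\tilde x)\|_{L^2}\leq L(1+T)|x-\tilde x|$; Lemma \ref{lem:uLip} shows (using \eqref{ass:Lcond} from Assumption \ref{ass:IV} and Assumption \ref{ass:probVol} iii)) that $\tilde{\eta}^{(n),abs}$ actually takes values in $\tilde{E}$, and Lemma \ref{lem:LG} then shows $\psi$ is Lipschitz on $\tilde{E}$, so the composed coefficients are Lipschitz there. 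Any correct proof must contain this step or an equivalent substitute.

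A secondary inaccuracy: you assert the joint convergence of $(Z_b^{(n)},Z_a^{(n)})$ and $X^{(n)}$ because they are "by construction carried by independent sources of randomness." In the pre-limit they are not independent --- both are built from the same price increments $\delta B^{(n)}_k,\delta A^{(n)}_k$ of the order book. The paper instead obtains joint convergence from the marginal convergences via Corollary 3.33 in \cite{JacodShiryaev}, using that the Brownian limit has continuous paths, and then deduces independence of the \emph{limits} because $X(\cdot,g)$ is a pure-jump L\'evy process and hence has vanishing quadratic covariation with $Z_b,Z_a$. Your uniqueness-by-Picard-between-jumps scheme is a workable alternative to the paper's direct Gronwall estimate (the finiteness of $Q_b,Q_a$ makes the interlacing legitimate), but it too only works once the Lipschitz issue above is repaired.
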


\subsubsection*{Step 5: End of the proof}

With a slight abuse of notation, by the Skorokhod representation theorem we may assume that $\tilde{\eta}^{(n),abs}$ converges almost surely in the Skorokhod topology to $\eta^{abs}$. 
Hence, there exists a sequence of continuous, strictly increasing functions $\gamma_n: [0,T] \rightarrow [0,T]$ with $\sup_{t \in [0,T]} |\gamma_n(t) - t| \rightarrow 0$ such that
\begin{equation}\label{eq:Skorokhodrepresentation}
    \sup_{t\in [0,T]}\left\|\tilde{\eta}^{(n),abs}(\gamma_n(t)) - \eta^{abs}(t)\right\|_E \rightarrow 0 \quad \text{a.s.}
\end{equation}
Then,we have
\begin{align}\label{eq:skB}
\begin{aligned}
&\sup_{t \in [0,T]} \left\|\psi\left(\tilde{\eta}^{(n),abs}(\gamma_n(t))\right) - \psi(\eta^{abs}(t))\right\|_E\\
&\quad\leq \sup_{t\in [0,T]} \Big\{ \left|\tilde{B}^{(n)}(\gamma_n(t)) - B^\eta(t)\right| + \left|\tilde{A}^{(n)}(\gamma_n(t)) - A^\eta(t)\right|\\
&\qquad + \left\|\tilde{u}_b^{(n)}\left(\gamma_n(t), -(\cdot - \tilde{B}^{(n)}(\gamma_n(t)))\right) - u^\eta_b(t, -(\cdot - B^\eta(t)))\right\|_{L^2}\\
&\qquad  + \left\|\tilde{u}_a^{(n)}\left(\gamma_n(t), \cdot - \tilde{A}^{(n)}(\gamma_n(t))\right) - u^\eta_a(t, \cdot - A^\eta(t))\right\|_{L^2} +  \left|\tilde{\tau}^{(n)}(\gamma_n(t)) - \tau^\eta(t)\right|\Big\}.
\end{aligned}
\end{align}
Applying Lemma \ref{lem:uLip}, we can bound
\begin{align*}
&\left\|\tilde{u}_b^{(n)}\left(\gamma_n(t),-( \cdot - \tilde{B}^{(n)}(\gamma_n(t)))\right) - u^\eta_b(t, -(\cdot - B^\eta(t)))\right\|_{L^2}\\
&\qquad \leq  \left\|\tilde{u}^{(n)}_b(\gamma_n(t), \cdot) - u^\eta_b(t, \cdot)\right\|_{L^2} + \left\|u^\eta_b\left(t,-( \cdot - \tilde{B}^{(n)}(\gamma_n(t)))\right) - u^\eta_b(t, -(\cdot - B^\eta(t))) \right\|_{L^2}\\
&\qquad \leq \left\|\tilde{u}_b^{(n)}(\gamma_n(t), \cdot)- u^\eta_b(t, \cdot)\right\|_{L^2} + L(1+T)\left|\tilde{B}^{(n)}(\gamma_n(t))-B^\eta(t)\right|.
\end{align*}
An analogous estimate holds for the ask side. 
Plugging these bounds into equation \eqref{eq:skB} and applying \eqref{eq:Skorokhodrepresentation}, it follows that 
\[\sup_{t \in [0,T]} \left\|\psi\left(\tilde{\eta}^{(n),abs}(\gamma_n(t))\right) - \psi\left(\eta^{abs}(t)\right)\right\|_E \rightarrow 0 \quad \text{a.s.}\]
Hence, $\tilde{\eta}^{(n)} = \psi(\tilde{\eta}^{(n),abs}) \Rightarrow \psi(\eta^{abs}) =: \eta$, which solves \eqref{eq:eta}. Therefore, Proposition \ref{res:mainTheorem-detTimes} holds true. Now, an application of Corollary \ref{Cor:FromDetToRanTimes} finishes the proof of Theorem \ref{res:mainTheorem}.

\section{Proofs}\label{sec:proofs}

\begin{proof}[\textbf{Proof of Corollary \ref{cor:mainTheorem}}]
 Thanks to Theorem \ref{res:mainTheorem}, $S^{(n)}$ converges weakly in the Skorokhod topology to $S = \eta \circ \zeta,$ where $\eta = (B^{\eta}, v_b^{\eta}, A^{\eta}, v_a^{\eta}, \tau^{\eta})$ is the unique strong solution to the system given in \eqref{eq:eta} and $\zeta(t) := \inf\{s> 0: \tau^{\eta}(s) > t\}.$\par 
First, we will show that $\eta$ solves the following coupled SDE-SPDE system: for $(t,x) \in [0,T]\times \R$,
 \begin{align}\label{eq:eta2}
 \begin{aligned}
 dB^{\eta}(t) &= p_b(\eta(t))dt + r_b(\eta(t))dZ_b(t) + \int_{[-M,M]} \theta_b(\eta(t-),y) \mu^{Q_b}(dt,dy),\\
 dv_b^{\eta}(t,x) &= \left(-\frac{\partial v^{\eta}_b}{\partial x}(t,x) p_b(\eta(t)) + \frac{1}{2}\frac{\partial^2 v^{\eta}_b}{\partial x^2}(t,x) \left(r_b(\eta(t))\right)^2 + f_b[\eta(t)](x)\right) dt \\
&\hspace{3.5cm} - \frac{\partial v^{\eta}_b}{\partial x}(t,x) r_b(\eta(t)) dZ_b(t) + \Big(v_b^{\eta}(t-,x-\Delta B^\eta_t)- v_b^{\eta}(t-,x)\Big),\\
dA^{\eta}(t) &= p_a(\eta(t))dt + r_a(\eta(t))dZ_a(t) + \int_{[-M,M]} \theta_a(\eta(t-),y) \mu^{Q_a}(dt,dy),\\
 dv_a^{\eta}(t,x) &= \left(\frac{\partial v_a^{\eta}}{\partial x}(t,x) p_a(\eta(t)) + \frac{1}{2}\frac{\partial^2 v_a^{\eta}}{\partial x^2}(t,x) \left(r_a(\eta(t))\right)^2 + f_a[\eta(t)](x)\right) dt \\
&\hspace{3.5cm} + \frac{\partial v_a^{\eta}}{\partial x}(t,x) r_a(\eta(t)) dZ_a(t) + \Big(v_a^{\eta}(t-,x+\Delta A^\eta_t)- v_a^{\eta}(t-,x)\Big),\\
d\tau^{\eta}(t) &= \varphi(\eta(t))dt.
 \end{aligned}
 \end{align}
 
\newpage

Since both, $v_{b,0}$ and $f_b[s]$, are twice continuously differentiable, we can apply Itô's formula for semimartingales with jumps (cf. Theorem II.7.32 in \cite{P04}) and obtain for any $x\in\R$,
\begin{align*}
&v_{b,0}(x - (B^{\eta}(t) - B_0))\\
&= v_{b,0}(x)-\int_0^t v_{b,0}'(x - (B^{\eta}(s-)-B_0)) dB^{\eta}_s + \frac{1}{2} \int_0^t v_{b,0}''(x-(B^{\eta}(s)-B_0)) \left(r_b(\eta(s))\right)^2 ds\\
&\hspace{0.3cm}+ \sum_{0 < s\leq t} \left\{v_{b,0}(x-(B^{\eta}(s) -B_0)) - v_{b,0}(x-(B^{\eta}(s-) -B_0)) + v_{b,0}'(x-(B^{\eta}(s-) - B_0)) \Delta B^{\eta}_s\right\}
\end{align*}
as well as
\begin{align*}
\int_0^t& f_b[\eta(u)] (x-(B^{\eta}(t) - B^{\eta}(u))) du\\
&= \int_0^t f_b[\eta(u)](x) du - \int_0^t \left(\int_0^s f_b'[\eta(u)](x-(B^{\eta}(s-)-B^{\eta}(u))) du\right) dB^{\eta}_s\\
&\qquad + \frac{1}{2} \int_0^t \left(\int_0^s f_b''[\eta(u)](x-(B^{\eta}(s)-B^{\eta}(u))) du\right) \left(r_b(\eta(s))\right)^2 ds\\
&\qquad + \sum_{0 < s \leq t} \left\{ \int_0^s \left(f_b[\eta(u)](x-(B^{\eta}(s) - B^{\eta}(u))) - f_b[\eta(u)](x-(B^{\eta}(s-) - B^{\eta}(u)))\right) du \right.\\
&\hspace{3cm} \left.+ \left(\int_0^s f_b'[\eta(u)](x-(B^{\eta}(s-)-B^{\eta}(u))) du\right)\Delta B^{\eta}_s\right\}.
\end{align*}
Combining both equations and using that $v_b^\eta(t,x)=v_b^\eta(t-,x-\Delta B_t^\eta)$ for all $t\in[0,T]$, we conclude that
 \begin{align*}
 v^{\eta}_b(t,x) &= v_{b,0}(x)-\int_0^t \frac{\partial v^{\eta}_b}{\partial x}(s-,x) dB^{\eta}_s + \frac{1}{2}\int_0^t \frac{\partial^2 v^{\eta}_b}{\partial x^2}(s,x) \left(r_b(\eta(s))\right)^2ds + \int_0^t f_b[\eta(s)](x) ds\\
 &\qquad + \sum_{0< s \leq t} \frac{\partial v^{\eta}_b}{\partial x}(s-,x) \Delta B^{\eta}_s + \sum_{0<s \leq t} \left(v_b^{\eta}(s,x) - v^{\eta}_b(s-,x)\right)\\
 &= v_{b,0}(x)+\int_0^t \left(-\frac{\partial v^{\eta}_b}{\partial x}(s,x) p_b(\eta(s)) + \frac{1}{2}\frac{\partial^2 v^{\eta}_b}{\partial x^2}(s,x) \left(r_b(\eta(s))\right)^2 + f_b[\eta(s)](x)\right) ds\\
 &\qquad - \int_0^t \frac{\partial v^{\eta}_b}{\partial x}(s,x) r_b(\eta(s)) dZ_b(s) + \sum_{0< s\leq t}\left(v^{\eta}_b(s-,x-\Delta B^\eta_s) - v^{\eta}_b(s-,x)\right).
 \end{align*}
 Similarly, we can show that $v^{\eta}_a$ solves the SPDE $dv^{\eta}_a.$ Hence, the solution $\eta$ of the system in \eqref{eq:eta} indeed solves the stated SDE-SPDE system in \eqref{eq:eta2}.\par 
By the definition of $\zeta$, observe that $\zeta = (\tau^{\eta})^{-1}.$ Hence,
\[\zeta'(t) = \left((\tau^{\eta})^{-1}\right)' = \frac{1}{(\tau^{\eta})'((\tau^{\eta})^{-1}(t))} = \frac{1}{\varphi(\eta \circ \zeta (t))} = \frac{1}{\varphi(S(t))}.\]
Since $\zeta$ is a continuous time change, the processes $S,$ $Z_I,$ $I = b,a,$ and $X(\cdot, g),$ for $g \in C_b([-M,M]^2)$ are adapted to $\zeta$ in the sense of Definition 10.13 in \cite{J79}. In particular, for $I = b,a,$ we have by Theorem 10.17 in \cite{J79} that
 \[\langle Z_I\circ \zeta, Z_I \circ \zeta\rangle_t = \langle Z_I, Z_I \rangle_{\zeta(t)} = \zeta(t),\]
 since $Z_I,$ $I = b,a,$ are standard Brownian motions. Moreover, by Theorem 10.27 in \cite{J79}, there exists an integer-valued random jump measure $\tilde{\mu}_I^Q$ such that for all $t \in [0,T],$
 \[X_I(\zeta(t), g) := \int_{[-M,M]} g(y) \mu^{Q}_I([0,\zeta(t)], dy) = \int_{[-M,M]} g(y) \tilde{\mu}^{Q}_I([0,t], dy),\]
 whose compensator is given by $\tilde{\nu}^Q_I(dt, dy) = (\varphi(S(t)))^{-1}dt \times Q_I(dy).$ Finally, we can apply Proposition 10.21 and Theorem 10.27 in \cite{J79} and conclude for all bounded, continuous functions $g_1: E \rightarrow \R,$ $g_2: E \times [-M,M] \rightarrow \R,$ and $t\in [0,T],$
 \[\int_0^{\zeta(t)}g_1(\eta(u))du = \int_0^t g_1(S(u))(\varphi(S(u)))^{-1} du, \quad \int_0^{\zeta(t)}g_1(\eta(u))dZ_I(u)= \int_0^t g_1(S(u)) \zeta^{1/2}(u)d\tilde{Z}_I(u)\]
 and 
 \[\int_0^{\zeta(t)} \int_{[-M,M]}g_2(\eta(u), y) \mu^{Q}_I(du, dy) = \int_0^t \int_{[-M,M]} g_2(S(u), y) \tilde{\mu}^{Q}_I(du, dy)\]
where $\tilde{Z}_{b}, \tilde{Z}_a$ are two independent Brownian motions, independent of $\tilde{\mu}^{Q}_b$ and $\tilde{\mu}^{Q}_a.$ Combining these observations with the fact that $\eta$ solves the system in \eqref{eq:eta2}, we conclude that $S$, starting in $S_0$, indeed solves the stated SDE-SPDE system in \eqref{eq:sde-spde}.
\end{proof}

\begin{proof}[\textbf{Proof of Corollary \ref{Cor:FromDetToRanTimes}}]
According to Proposition \ref{res:mainTheorem-detTimes}, $\eta^{(n)}$ 
converges weakly in the Skorokhod topology to $\eta := (B^{\eta}, v^{\eta}_b, A^{\eta}, v^{\eta}_a, \tau^{\eta})$ being the unique strong solution of the coupled diffusion-fluid system  \eqref{eq:eta}. 
Since $\varphi^{(n)},$ $\varphi$ are strictly positive, $\tau^{\eta,(n)},$ $\tau^{\eta}$ are increasing (resp.~strictly increasing) and therefore,
\[\zeta^{(n)}(t) = (\tau^{\eta,(n)})^{-1}(t) = \inf\{u > 0: \tau^{\eta,(n)}(u) > t\} \quad \text{ and } \quad \zeta(t) = (\tau^{\eta})^{-1}(t)\]
exist and are increasing. Moreover, $\zeta$ is continuous and even strictly increasing. By Corollary 13.6.4 in Whitt \cite{W02}, the inverse map is continuous at strictly increasing functions. By the continuous mapping theorem we therefore conclude that $(\zeta^{(n)},\eta^{(n)}) \Rightarrow (\zeta,\eta)$ in the Skorokhod topology. Especially, this proves i).

Since $\zeta$ is continuous and strictly increasing almost surely and since $\eta$ is continuous at time $\zeta(T)$ almost surely, Theorem 3.1 in Whitt \cite{W80} yields the continuity of the composition map in $(\zeta, \eta)$ in the Skorokhod topology. Hence, we can apply the continuous mapping theorem to conclude that
\[S^{(n),*} = \eta^{(n)}\circ (\zeta^{(n)}-\tn) \Rightarrow \eta \circ \zeta =: S\]
in the Skorokhod topology, which proves ii).

Finally, it follows from Theorem 12.5 in \cite{Billingsley} that $\tau^{\eta,(n)}(T)$ converges weakly to $\tau^{\eta}(T)$ and hence
\begin{equation*}
    \Pro\left[\sup_{t\in[0,T]}\big\|S^{(n),*}(t)-S^{(n)}(t)\big\|_E>0\right]=\Pro\left[\tau^{\eta,(n)}(T)> T\right]\rightarrow\Pro\left[\tau^\eta(T)> T\right]=0,
\end{equation*}
since by Assumption \ref{ass:randTimes} iii),
\begin{equation*}
\tau^\eta(T)=\int_0^T\varphi(\eta(t))dt\leq T.
\end{equation*}
This proves iii).
\end{proof}

\begin{proof}[\textbf{Proof of Proposition \ref{res:limitSJ}}]
First note that for all $n \in \N,\ k\leq T_n$,
 \begin{equation*}
 \frac{\xn \left| p_b^{(n)}(\s) \right|}{\left(r_b^{(n)}(\s)\right)^2} = \frac{\xn\left|\E\left[\dBn_k \1\left(0 < |\dBn_k|\leq \delta_n\right) \Big| \F_{k-1}\right]\right|}{\E\left[(\dBn_k)^2 \1\left(0 < |\dBn_k|\leq \delta_n\right) \Big| \F_{k-1}\right]}\leq 1\quad\text{a.s.}
 \end{equation*}
  Together with Assumption \ref{ass:Prob}, this gives the bound 
\begin{align*}
  \frac{\tn (p^{(n)}_b(\s))^2}{(r^{(n)}_b(\s))^2}\leq C\frac{\tn}{\xn},
 \end{align*}
which converges to zero by Assumption \ref{ass:Scaling}. A similar bound holds for $p^{(n)}_a$ and $r^{(n)}_a$. Hence, for all $t\in[0,T]$ and $I=b,a$, 
 \begin{equation*}
\sum_{k=1}^{\ttn} \E\left[\left(\delta Z_{k,I}^{(n)}\right)^2\big|\F_{k-1}\right] = \sum_{k=1}^{\ttn} \frac{\tn \left(r^{(n)}(\s)\right)^2- \left(\tn p^{(n)}(s)\right)^2}{\left(r^{(n)}(\s)\right)^2}\rightarrow t.
  \end{equation*}
Moreover, as $A$ and $C$ events do not occur simultaneously, we have for all $t\in[0,T]$, 
\begin{equation*}
\sum_{k=1}^{\ttn} \E\left[\delta Z^{(n)}_{b,k}, \delta Z^{(n)}_{a,k}|\F_{k-1}\right]=
- \sum_{k=1}^{\ttn} \frac{(\tn)^2p^{(n)}_b(S^{(n)}_{k-1})p^{(n)}_a(S^{(n)}_{k-1})}{r^{(n)}_b(S^{(n)}_{k-1}) r^{(n)}_a(S^{(n)}_{k-1})}\rightarrow 0.
\end{equation*}
Finally, we observe that for all $\varepsilon > 0$ and $I=b,a$,
\begin{align*}
\sum_{k=1}^{\ttn} \E\left[|\delta Z^{(n)}_{I,k}|^2 \1(|\delta Z^{(n)}_{I,k}|> \varepsilon) \right] \leq
\sum_{k=1}^{\ttn} \E\left[|\delta Z^{(n)}_{I,k}|^2 \1\left(\frac{\delta_n}{\eta_n}> \varepsilon\right) \right] \leq
\frac{\delta_n}{\varepsilon\eta_n}\sum_{k=1}^{\ttn} \E|\delta Z^{(n)}_{I,k}|^2
\leq \frac{t\delta_n}{\varepsilon\eta_n},
\end{align*}
which converges to zero by Assumption \ref{ass:probVol} i). As $(\delta Z^{(n)}_{b,k},\delta Z^{(n)}_{a,k})_{k,n}$ is a triangular martingale difference arrays, we may conclude by the functional limit theorem that $(Z^{(n)}_b,Z^{(n)}_a)\Rightarrow(Z_b,Z_a)$ in the Skorokhod topology, where $Z_b$ and $Z_a$ are independent Brownian motions.
\end{proof}

\begin{proof}[\textbf{Proof of Lemma \ref{lem:comLJ}}]
As both representations can be proven in the same way, we will only present the proof for $B^{\eta,\ell,(n)}.$ The statement follows directly from the definition of $\mu^{J^{(n)}}_b$:
\begin{align*}
\int_0^t &\int_{[-M,M]} \theta^{(n)}_b(\eta^{(n)}(u-), y)\mu^{J^{(n)}}_b(du, dy)\\
&=\sum_{k=1}^{\ttn} \int_{[-M,M]} \theta^{(n)}_b(\s, y)\ \mu^{J^{(n)}}_b\left(\left[\t_k, \t_{k+1}\right) , dy\right)\\
&\quad\stackrel{(1)}{=}\sum_{k=1}^{\ttn} \sum_{j\in\Z^{(n)}_M} \theta^{(n)}_b(\s, x_j^{(n)})\mu^{\eta,(n)}_b\left(\left[\t_k, \t_{k+1}\right) , \left\{\theta^{(n)}\big(\s,x_j^{(n)}\big)\right\}\right)\\
&\quad\stackrel{(2)}{=}
\sum_{k=1}^{\ttn} \sum_{j\in\Z}  x_j^{(n)}\mu^{\eta,(n)}_b\left(\left[\t_k, \t_{k+1}\right) , \left\{x_j^{(n)}\right\}\right) =
\int_{\R} y\, \mu^{\eta,(n)}_b\left([0,t] , dy\right) = B^{\eta,\ell,(n)}(t),
\end{align*}
where in (1) we used the definition of $\mu^{J^{(n)}}_b$ noting that it only charges the grid points $x_j^{(n)}$, $j \in \Z^{(n)}_M$, and in (2) we used Assumption \ref{ass:ExQ} v).
\end{proof}

\begin{proof}[\textbf{Proof of Theorem \ref{res:limitLJ}}]
From \eqref{def:compJn} we have 
for all $g \in C_b([-M,M])$, $t \in [0,T]$, and $I=b,a$,
\begin{align*}
\int_{[-M,M]} g(y)\nu_I^{J^{(n)}}&([0,t], dy)=
\sum_{k=1}^{\ttn}\tn\sum_{j\in \Z^{(n)}_M} g(x_j^{(n)})K_I^{(n)}\Big(S_{k-1}^{(n)},\left\{\theta^{(n)}_I\Big(S^{(n)}_{k-1},x_j^{(n)}\Big)\right\}\Big)\\
&+\sum_{k=1}^{\ttn}\tn\sum_{j\in \Z^{(n)}_M} g(x_j^{(n)})\1\Big(\theta^{(n)}_I(S^{(n)}_{k-1},x_j^{(n)})=0\Big)Q_I\Big(\left[x_{j}^{(n)},x_{j+1}^{(n)}\right)\Big).
\end{align*}
First, from Lemma \ref{lem:theta} we see that
\begin{align*}
&\sum_{k=1}^{\ttn} \tn\sum_{j\in \Z^{(n)}_M} g(x_j^{(n)})K_I^{(n)}\Big(S_{k-1}^{(n)},\left\{\theta_I^{(n)}\Big(S^{(n)}_{k-1},x_j^{(n)}\Big)\right\}\Big)\\
&\qquad=\sum_{k=1}^{\ttn} \tn\sum_{j} g(x_j^{(n)})K^{(n)}_I\left(S_{k-1}^{(n)},\theta_I\Big(S^{(n)}_{k-1},\left[x_{j}^{(n)},x_{j+1}^{(n)}\right)\Big)\right).
\end{align*}
 Next, we bound
\begin{align*}
I_n^{(1)}:=\sum_{k=1}^{\ttn}\tn\sum_{j} g(x_j^{(n)})&\left[K^{(n)}_I\left(S_{k-1}^{(n)},\theta_I\Big(S^{(n)}_{k-1},\left[x_{j}^{(n)},x_{j+1}^{(n)}\right)\Big)\right)\right.\\
&\left.\qquad\qquad\qquad-K_I\left(S_{k-1}^{(n)},\theta_I\Big(S^{(n)}_{k-1},\left[x_{j}^{(n)},x_{j+1}^{(n)}\right)\Big)\right)\right]
\end{align*}
by
\begin{align*}
    |I_n^{(1)}|\leq t\cdot\|g\|_\infty\cdot \sup_{s\in E}\sum_{j}\left|K^{(n)}_I\left(s,\theta_I\Big(s,\left[x_{j}^{(n)},x_{j+1}^{(n)}\right)\Big)\right)-K_I\left(s,\theta_I\Big(s,\left[x_{j-1}^{(n)},x_j^{(n)}\right)\Big)\right)\right|,
\end{align*}
which converges to zero by Assumption \ref{ass:ExQ} iii).
Moreover, we note that also
\begin{align*}
I_n^{(2)}:= \sum_{k=1}^{\ttn}\tn\sum_{j\in \Z^{(n)}_M} g(x_j^{(n)})\int_{[\x_j, \x_{j+1})}\Big[\1\Big(\theta_I^{(n)}&(S^{(n)}_{k-1},x_j^{(n)})=0\Big)\\
&-\1\Big(\theta_I(S^{(n)}_{k-1},x)=0\Big)\Big]Q_I(dx)
\end{align*}
goes to zero by Assumption \ref{ass:ExQ} vi). Therefore,
\begin{align*}
&\int_{[-M,M]} g(y)\nu^{J^{(n)}}_I([0,t]\times dy)=\sum_{k=1}^{\ttn}\tn\sum_{j} g(x_j^{(n)})\Big[K_I\Big(S_{k-1}^{(n)},\theta_I\Big(S^{(n)}_{k-1},\left[x_{j}^{(n)},x_{j+1}^{(n)}\right)\Big)\Big)\\
&\hspace{6cm} +
Q_I\left(\left\{x\in\left[x_{j}^{(n)},x_{j+1}^{(n)}\right):\theta_I\Big(S_{k-1}^{(n)},x\Big)=0\right\}\right)\Big] + I_n^{(1)}+I_n^{(2)}\\
&\stackrel{(1)}{=}\sum_{k=1}^{\ttn}\tn\sum_{j} g(x_j^{(n)})Q_I\left(\left[x_{j}^{(n)},x_{j+1}^{(n)}\right)\right) + I_n^{(1)}+I_n^{(2)}\rightarrow t\int_{[-M,M]} g(y)Q_I(dy),
\end{align*}
where in (1) we used Assumption \ref{ass:ExQ} ii). By the definition of the measure $\mu^{J^{(n)}}$ we then also have
\begin{align*}
&\int_{[-M,M]^2} g(x,y)\nu^{J^{(n)}}([0,t], dx, dy)
\rightarrow t\int_{[-M,M]^2}g(x,y)\left[Q_b(dx)\varepsilon_0(dy)+Q_a(dy)\varepsilon_0(dx)\right].
\end{align*}
As a Poisson random measure is uniquely determined by its intensity measure we conclude by Theorem 2.6 in \cite{KurtzProtter2} that for any $g_1, \cdots, g_m \in C_b([-M,M]^2)$,
\[\left(X^{(n)}(\cdot, g_1), \cdots, X^{(n)}(\cdot, g_m)\right) \Rightarrow (X(\cdot, g_1), \cdots, X(\cdot, g_m))\]
in $\mathcal{D}(\R^m; [0,T])$ with
\[X(t,g):=\int_{[-M,M]^2}g(x,y)\left[\mu^Q_b([0,t]\times dx)\varepsilon_0(dy)+\mu^Q_a([0,t]\times dy)\varepsilon_0(dx)\right],\]
where $\mu_I^Q,\ I=b,a$, are two independent Poisson random measures with intensity measure $\lambda\times Q_I$ for $I=b,a,$ respectively.
\end{proof}

\begin{Rmk}\label{rmk:jumpEstimate}
Carefully inspecting the above proof, we see that we have actually shown for $I=b,a$ and any $g\in C_b([-M,M])$ and $k=0,\dots,T_n$ the almost sure estimate
\[
\left|\int_{-M}^M g(y)\nu_I^{J^{(n)}}\Big(\left[t_k^{(n)},t_{k+1}^{(n)}\right)\times dy\Big)-\tn\sum_{j\in \Z^{(n)}_M} g(x_j^{(n)})Q_I\Big(\left[x_j^{(n)},x_{j+1}^{(n)}\right)\Big)\right|=o\left(\|g\|_\infty \tn\right).\]
Moreover, the estimate would still be true, if $g$ was $\F_{k-1}$-measurable.
This will be used later in Section \ref{proof:convApprox} and in the proof of Lemma \ref{res:UT}.
\end{Rmk}

\begin{proof}[\textbf{Proof of Proposition \ref{res:conRemainder}}]
First, we observe that
\[ \left\|R^n_{b,k}\right\|_{L^2} = \left\|\vn \sum_{j=1}^k\left(M^{(n)}_{b,j} - f^{(n)}_b[S^{(n)}_{j-1}]\right) (\cdot + B^{(n)}_{j-1})\right\|_{L^2} =: \left\|\tilde{R}^{(n)}_{b,k}\right\|_{L^2}.\]
Therefore, it is enough to prove the result for $\left(\tilde{R}^{(n)}_{b,k}\right)_{k\leq T_n}.$ By Assumption \ref{ass:probVol} ii), this sequence is a martingale with values in $L^2(\R)$. Applying Lemma \ref{techL:L2est} as well as Assumptions \ref{ass:probVol} i) and \ref{ass:Scaling} , there exists $C>0$ such that
\begin{align*}
    \E\left[\sup_{k\leq T_n} \|\tilde{R}^{(n)}_{b,k}\|^2_{L^2}\right] &\leq
    C \E\left[\sum_{k=1}^{T_n}\left\|\vn \left(M^{(n)}_{b,k}-f^{(n)}_b[\s]\right)\right\|^2_{L^2}\right]\\
    &\leq C T_n (\vn)^2 \sup_{k \leq T_n}\E\left[\left\|(M^{(n)}_{b,k})^2\right\|_{L^1}\right]\\
    &\leq CT\frac{(\vn)^2}{(\tn)^2} \frac{\tn}{\xn}\sup_{k\leq T_n} \E\left[(\omega_k^{(n)})^2\right]\rightarrow 0.
\end{align*}
Hence, $\E\left[\sup_{k\leq T_n} \|R^{(n)}_{b,k}\|^2_{L^2}\right] \rightarrow 0$ and analogously, one can show that $\E\left[\sup_{k\leq T_n} \|R^{(n)}_{a,k}\|^2_{L^2}\right] \rightarrow 0$. Next, observe that also
\[    R^{(n)}_{\varphi,k} := \tn \sum_{j=1}^k \left(\varphi^{(n)}_j - \varphi^{(n)}(S^{(n)}_{j-1})\right)\]
 defines a martingale and by Lemma \ref{techL:L2est} we have
 \begin{align*}
     \E\left[\sup_{k \leq T_n} \left| R^{(n)}_{\varphi,k}\right|^2\right] &\leq 
     C\E\left[\sum_{k=1}^{T_n}\left|\tn \left(\varphi^{(n)}_k - \varphi^{(n)}(S^{(n)}_{k-1})\right)\right|^2\right]\leq C T \tn \sup_{k\leq T_n} \E\left[\left(\varphi^{(n)}_k\right)^2\right] \rightarrow 0,
\end{align*}
where we applied Assumption \ref{ass:randTimes} i).
\end{proof}

\subsection{Proof of Proposition \ref{res:convApprox}}\label{proof:convApprox}

In order to prove the claim, we will introduce two further candidates for approximations of $S^{(n)}$ -- namely $\bar{S}^{(n)}$ and $\hat{S}^{(n)}$. We define $\bar{S}^{(n)}_k = (\bar{B}^{(n)}_k, \bar{v}^{(n)}_{b,k}, \bar{A}^{(n)}_k, \bar{v}^{(n)}_{a,k},\bar{\tau}^{(n)}_k)$ for $k=0,\dots,T_n$ as  
\begin{align*}
\bar{B}^{(n)}_k &:= B^{(n)}_k,\qquad \bar{A}^{(n)}_k := A^{(n)}_k,\qquad \bar{\tau}^{(n)}_k := \sum_{j=1}^k \varphi(S^{(n)}_{j-1})\tn,\\
\bar{v}^{(n)}_{b,k}(x) &:= v_{b,0}\left(x - \left(B^{(n)}_k - B^{(n)}_0\right)\right) + \sum_{j=1}^k f_b\left[S^{(n)}_{j-1}\right]\left(x - \left(B^{(n)}_k - B^{(n)}_{j-1}\right)\right) \tn,\\
\bar{v}^{(n)}_{a,k}(x) &:= v_{a,0}\left(x + \left(A^{(n)}_k - A^{(n)}_0\right)\right) + \sum_{j=1}^k f_a\left[S^{(n)}_{j-1}\right]\left(x + \left(A^{(n)}_k - A^{(n)}_{j-1}\right)\right) \tn.
\end{align*}
Note that the coefficient functions in the dynamics of $\bar{S}^{(n)}_k$ still depend on the original LOB sequence. In the second approximation $\hat{S}^{(n)}_k=(\hat{B}^{(n)}_k, \hat{v}^{(n)}_{b,k},\hat{A}^{(n)}_k,\hat{v}^{(n)}_{a,k}, \hat{\tau}^{(n)}_k)$ the diffusion coefficient and the coefficient of the compensated jumps depend on the approximated LOB dynamics $\tilde{S}^{(n)}$, whereas all other coefficients still depend on the original LOB sequence $S^{(n)}$, i.e.~for $k=0,\dots,T_n$, 
\begin{align*}
\begin{aligned}
	\hat{B}^{(n)}_k &:= B^{(n)}_0 + \sum_{j=1}^k \Bigg( p^{(n)}_b\left(S^{(n)}_{j-1}\right) \tn + r^{(n)}_b\left(\tilde{S}^{(n)}_{j-1}\right)\delta Z^{(n)}_{b,j}\\
	&\hspace{2.5cm}+ \int_{[-M,M]} \theta^{(n)}_b\left(\tilde{S}^{(n)}_{j-1}, y\right) \left(\mu^{J^{(n)}}_b-\nu^{J^{(n)}}_b\right)\left(\left[\t_j, \t_{j+1}\right), dy\right)\\
	&\hspace{2.5cm}+ \int_{[-M,M]} \theta^{(n)}_b\left(S^{(n)}_{j-1}, y\right) \nu^{J^{(n)}}_b\left(\left[\t_j, \t_{j+1}\right), dy\right)\Bigg),\\
	\hat{v}^{(n)}_{b,k}(x) & := v_{b,0}\left(x - \left(\hat{B}^{(n)}_k - B^{(n)}_0\right)\right) + \sum_{j=1}^k f_b\left[S^{(n)}_{j-1}\right]\left(x - \left(\hat{B}^{(n)}_k - \hat{B}^{(n)}_{j-1}\right)\right) \tn,\\
	\hat{A}^{(n)}_k &:= A^{(n)}_0 + \sum_{j=1}^k \Bigg( p^{(n)}_a\left(S^{(n)}_{j-1}\right) \tn + r^{(n)}_a\left(\tilde{S}^{(n)}_{j-1}\right)\delta Z^{(n)}_{a,j}\\
	&\hspace{2.5cm}+ \int_{[-M,M]} \theta^{(n)}_a\left(\tilde{S}^{(n)}_{j-1}, y\right) \left(\mu^{J^{(n)}}_a-\nu^{J^{(n)}}_a\right)\left(\left[\t_j, \t_{j+1}\right), dy\right)\\
	&\hspace{2.5cm}+ \int_{[-M,M]} \theta^{(n)}_a\left(S^{(n)}_{j-1}, y\right) \nu^{J^{(n)}}_a\left(\left[\t_j, \t_{j+1}\right), dy\right)\Bigg),\\
	\hat{v}^{(n)}_{a,k}(x) & := v_{a,0}\left(x - \left(\hat{A}^{(n)}_k - A^{(n)}_0\right)\right) + \sum_{j=1}^k f_a\left[S^{(n)}_{j-1}\right]\left(x - \left(\hat{A}^{(n)}_k - \hat{A}^{(n)}_{j-1}\right)\right) \tn,\\
	\hat{\tau}^{(n)}_k &:= \bar{\tau}^{(n)}_k.
\end{aligned}
\end{align*}

In what follows, let $(a_n)_{n\in \N}$ be a deterministic null sequence possibly changing from line to line. Further, we write $A \lesssim B$ if $A \leq CB$ for some deterministic constant $C > 0.$ 

\newpage 

\textbf{Step 1:} We prove that $\bar{S}^{(n)}$ is indeed an approximation for $S^{(n)}$.  Applying Proposition \ref{res:conRemainder} as well as Assumptions \ref{ass:IV}, \ref{ass:randTimes}, \ref{ass:probVol} iii), and \ref{ass:Scaling}, we see that 
\begin{align*}
 \E&\left[\sup_{k\leq T_n}\left\|S^{(n)}_k - \bar{S}^{(n)}_k\right\|_{E}^2\right]\\
 &\quad \lesssim   \|v_{b,0}^{(n)}-v_{b,0}\|^2_{L^2} + \|v_{a,0}^{(n)}-v_{a,0}\|^2_{L^2}+ \frac{(\vn)^2}{(\tn)^2}\left[\sup_{s\in E} \left\|f_b[s]-f^{(n)}_b[s]\right\|^2_{L^2}+\sup_{s\in E} \left\|f_a[s]-f^{(n)}_a[s]\right\|^2_{L^2} \right]\\
 &\qquad+ \E\left[\sup_{k\leq T_n} \left\|R^{(n)}_{b,k}\right\|^2_{L^2}\right]+ \E\left[\sup_{k\leq T_n} \left\| R^{(n)}_{a,k}\right\|^2_{L^2}\right]
 +  \sup_{s\in E}\left|\varphi(s) - \varphi^{(n)}(s)\right|^2 + \E\left[\sup_{k\leq T_n} \left|R^{(n)}_{\varphi, k}\right|^2\right] + a_n \rightarrow 0.
\end{align*}

 \textbf{Step 2:} We prove upper bounds for the pathwise $L^2(\R)$-errors of the approximated volume functions. For all $k\leq T_n$, applying equation \eqref{ass:Lcond} in Assumption \ref{ass:IV}, we have
\[ \left\|v_{b,0}\left(\cdot - \left(\Bn_k-\Bn_0\right)\right)-v_{b,0}\left(\cdot - \left(\tilde{B}^{(n)}_k - \Bn_0\right)\right)\right\|_{L^2} \leq L\left|B^{(n)}_k - \tilde{B}^{(n)}_k\right|.\]
 Further, using the Lipschitz-continuity of $f_b: E \times \R \rightarrow \R$ in both variables, cf.~Assumption \ref{ass:probVol} iii), we have
 \begin{align*}
& \left\|\sum_{j=1}^k \left(f_b[S^{(n)}_{j-1}](\cdot - (B^{(n)}_k - B^{(n)}_{j-1})) - f_b[\tilde{S}^{(n)}_{j-1}](\cdot - (\tilde{B}^{(n)}_k - \tilde{B}^{(n)}_{j-1}))\right) \tn \right\|_{L^2}\\
 & \leq \tn \sum_{j=1}^k \left\{\left\|f_b[S^{(n)}_{j-1}](\cdot - (B^{(n)}_k - B^{(n)}_{j-1})) - f_b[\tilde{S}^{(n)}_{j-1}](\cdot - (B^{(n)}_k - B^{(n)}_{j-1}))\right\|_{L^2}\right. \\
 & \hspace{3cm} \left. + \left\|f_b[\tilde{S}^{(n)}_{j-1}](\cdot - (B^{(n)}_k - B^{(n)}_{j-1})) - f_b[\tilde{S}^{(n)}_{j-1}](\cdot - (\tilde{B}^{(n)}_k - \tilde{B}^{(n)}_{j-1}))\right\|_{L^2}\right\}\\
 & \leq \tn \sum_{j=1}^k L \left\{\left\|S^{(n)}_{j-1} - \tilde{S}^{(n)}_{j-1}\right\|_{E} + \left|B^{(n)}_k - B^{(n)}_{j-1} - \left(\tilde{B}^{(n)}_k - \tilde{B}^{(n)}_{j-1}\right)\right|\right\}\\
 &\leq \tn \sum_{j=1}^k 2L \left\|S^{(n)}_{j-1} - \tilde{S}^{(n)}_{j-1}\right\|_{E} + LT \left|B^{(n)}_k - \tilde{B}^{(n)}_k\right|.
 \end{align*}
Combining both bounds, we conclude that
\begin{equation}\label{v-bar-tilde}
\left\|\bar{v}^{(n)}_{b,k} - \tilde{v}^{(n)}_{b,k}\right\|_{L^2} \lesssim \left|B^{(n)}_k - \tilde{B}^{(n)}_k\right| + \tn \sum_{j=1}^k\left\|S^{(n)}_{j-1} - \tilde{S}^{(n)}_{j-1}\right\|_{E}.
\end{equation}
Similarly, one can show that
\begin{equation}\label{eq:v-hat-tilde}
\left\|\hat{v}^{(n)}_{b,k} - \tilde{v}^{(n)}_{b,k}\right\|_{L^2} \lesssim \left|\hat{B}^{(n)}_k - \tilde{B}^{(n)}_k\right| +\tn \sum_{j=1}^k\left\{\left|\hat{B}^{(n)}_{j-1} - \tilde{B}^{(n)}_{j-1}\right|+ \left\|S^{(n)}_{j-1} - \tilde{S}^{(n)}_{j-1}\right\|_{E}\right\}
\end{equation}
and
\begin{equation}\label{eq:v-bar-hat}
\left\|\bar{v}^{(n)}_{b,k} - \hat{v}^{(n)}_{b,k}\right\|_{L^2} \lesssim \left|B^{(n)}_k - \hat{B}^{(n)}_k\right| + \tn \sum_{j=1}^k\left|B^{(n)}_{j-1} - \hat{B}^{(n)}_{j-1}\right|\lesssim \sup_{j\leq k}\left|B^{(n)}_j - \hat{B}^{(n)}_j\right|.
\end{equation}
Analogous estimates hold for the approximating processes on the ask side.

\newpage 

\textbf{Step 3:} We prove upper bounds for the differences of the price and time coefficient functions. Let $i \leq T_n$. Since $(\delta Z^{(n)}_{b,j})_{j \leq T_n}$ defines a martingale difference array, we can apply Doob's inequality for $p = 2$ and Assumption \ref{ass:limitFct} to conclude 
\begin{align}\label{eq:cR}
\E\left[\sup_{k\leq i} \left(\sum_{j=1}^k\left(r^{(n)}_b(S^{(n)}_{j-1}) - r^{(n)}_b(\tilde{S}^{(n)}_{j-1})\right)\delta Z^{(n)}_{b,j}\right)^2\right] &\lesssim \E\left[\sum_{j=1}^i \left(r^{(n)}_b(S^{(n)}_{j-1}) - r^{(n)}_b(\tilde{S}^{(n)}_{j-1})\right)^2 \tn\right]\notag\\
& \lesssim  \tn \sum_{j=1}^i \E\left\|S_{j-1}^{(n)}-\tilde{S}^{(n)}_{j-1}\right\|^2_{E}+ a_n.
\end{align}
For the compensated jumps, we can argue as follows:
\begin{align}\label{eq:cT1}
\E&\left[\sup_{k\leq i} \left(\sum_{j=1}^k \int_{[-M,M]}\left(\theta_b^{(n)}(S^{(n)}_{j-1},y) - \theta_b^{(n)}(\tilde{S}^{(n)}_{j-1},y)\right) \, \left(\mu^{J^{(n)}}_b-\nu^{J^{(n)}}_b\right)\left(\left[\t_j, \t_{j+1}\right), dy\right)\right)^2\right]\notag\\
& \overset{(1)}{\lesssim} \sum_{j=1}^i\E\left[ \left( \int_{[-M,M]} \left(\theta^{(n)}_b(S^{(n)}_{j-1},y) - \theta^{(n)}_b(\tilde{S}^{(n)}_{j-1},y)\right) \, \left(\mu^{J^{(n)}}_b-\nu^{J^{(n)}}_b\right)\left(\left[\t_j, \t_{j+1}\right), dy\right)\right)^2\right]\notag\\
&\lesssim \sum_{j=1}^i\E\left[ \left( \int_{[-M,M]}\left|\theta^{(n)}_b(S^{(n)}_{j-1},y) - \theta^{(n)}_b(\tilde{S}^{(n)}_{j-1},y)\right| \, \mu^{J^{(n)}}_b\left(\left[\t_j, \t_{j+1}\right) , dy\right)\right)^2\right]\notag\\
&\quad+\sum_{j=1}^i\E\left[ \left( \int_{[-M,M]}\left|\theta^{(n)}_b(S^{(n)}_{j-1},y) - \theta_b^{(n)}(\tilde{S}^{(n)}_{j-1},y)\right| \, \nu^{J^{(n)}}_b\left(\left[\t_j, \t_{j+1}\right) , dy\right)\right)^2\right]\notag\\
&\overset{(2)}{=} \sum_{j=1}^i\E\left[ \int_{[-M,M]}\left|\theta_b^{(n)}(S^{(n)}_{j-1},y) - \theta_b^{(n)}(\tilde{S}^{(n)}_{j-1},y)\right|^2 \, \nu_b^{J^{(n)}}\left(\left[\t_j, \t_{j+1}\right) , dy\right)\right]\notag\\
&\quad+\sum_{j=1}^i\E\left[ \left( \int_{[-M,M]}\left|\theta_b^{(n)}(S^{(n)}_{j-1},y) - \theta^{(n)}_b(\tilde{S}^{(n)}_{j-1},y)\right| \, \nu^{J^{(n)}}_b\left(\left[\t_j, \t_{j+1}\right) , dy\right)\right)^2\right]\notag\\
&\overset{(3)}{\lesssim} \sum_{j=1}^i\E\left[ \left(\left\|S^{(n)}_{j-1} - \tilde{S}^{(n)}_{j-1}\right\|^2_{E}+(\xn)^2 \right)\, \nu^{J^{(n)}}_b\left(\left[\t_j, \t_{j+1}\right) , [-M,M]\right)\right]\notag\\
&\quad+\sum_{j=1}^i\E\left[ \left( \Big(\left\|S^{(n)}_{j-1} - \tilde{S}^{(n)}_{j-1}\right\|_{E}+\xn\Big) \, \nu_b^{J^{(n)}}\left(\left[\t_j, \t_{j+1}\right) , [-M,M]\right)\right)^2\right]\notag\\
&\overset{(4)}{\lesssim} \tn(1+\tn)\sum_{j=1}^i\E\left[ \left(\left\|S^{(n)}_{j-1} - \tilde{S}^{(n)}_{j-1}\right\|^2_{E}+(\xn)^2 \right)\,\left( Q_b\Big([-M,M]\Big)+a_n\right)\right]\notag\\
&\lesssim \tn \sum_{j=1}^i \E\left\|S^{(n)}_{j-1} - \tilde{S}^{(n)}_{j-1}\right\|^2_{E} + a_n.
\end{align}
In (1) we applied the Burkholder-Davis-Gundy inequality for $p=2$ (cf.~Theorem IV.4.48 in \cite{P04}). In (2) we used the fact that $\mu^{J^{(n)}}_b$ is an integer-valued random measure, which only increases at times $t_k^{(n)},\ k=1,\dots,T_n$, and that $\nu^{J^{(n)}}$ is the compensator of $\mu^{J^{(n)}}_b$. Finally, in (3) we applied the uniform Lipschitz-estimate for $\theta$ (cf. Assumption \ref{ass:ExQ} iv)) and in (4) we used the estimate in Remark \ref{rmk:jumpEstimate} with $g\equiv 1$.\par
For the drift component, using the Lipschitz-continuity of $p_b$, we have
\begin{align}\label{eq:cP}
\sup_{k\leq i} \left|\sum_{j=1}^k \left(p_b^{(n)}(S^{(n)}_{j-1}) - p_b^{(n)}(\tilde{S}^{(n)}_{j-1})\right)\tn \right| &\leq \tn \sum_{j=1}^i  \left|p^{(n)}_b(S^{(n)}_{j-1})-p_b^{(n)}(\tilde{S}^{(n)}_{j-1}) \right|\notag\\
&\leq L \tn \sum_{j=1}^i \left\|S^{(n)}_{j-1}-\tilde{S}^{(n)}_{j-1}\right\|_{E} + a_n
\end{align}
and analogously for the time component, using the Lipschitz-continuity of $\varphi$, we conclude
\begin{align}\label{eq:cTime}
\sup_{k\leq i}\left|\sum_{j=1}^k \left(\varphi(S^{(n)}_{j-1}) - \varphi(\tilde{S}^{(n)}_{j-1})\right) \tn \right| \leq L\tn \sum_{j=1}^i \left\|S^{(n)}_{j-1}-\tilde{S}^{(n)}_{j-1}\right\|_{E} + a_n.
\end{align}
Finally, applying once more Assumption \ref{ass:ExQ} iv) and the estimate in Remark \eqref{rmk:jumpEstimate}, we have
\begin{align}\label{eq:cT2}
 \sup_{k\leq i} & \Bigg|\sum_{j=1}^k \int_{[-M,M]} \left(\theta^{(n)}_b(S^{(n)}_{j-1}, y)-\theta^{(n)}_b(\tilde{S}^{(n)}_{j-1},y)\right) \, \nu^{J^{(n)}}_b\left(\left[\t_j, \t_{j+1}\right) \times dy\right) \Bigg| \notag\\
& \lesssim \sum_{j=1}^i \left(\Big\|S^{(n)}_{j-1}- \tilde{S}^{(n)}_{j-1} \Big\|_{E}+\xn \right) \, \nu^{J^{(n)}}_b\left(\left[\t_j, \t_{j+1}\right)\times [-M,M] \right)\notag\\
& \lesssim \tn\sum_{j=1}^i \left(\Big\|S^{(n)}_{j-1}- \tilde{S}^{(n)}_{j-1} \Big\|_{E}+\xn \right) \, \left(Q_b\Big( [-M,M] \Big)+a_n\right)\notag\\
&\lesssim \tn \sum_{j=1}^i \left\|S^{(n)}_{j-1}-\tilde{S}^{(n)}_{j-1}\right\|_{E}+a_n.
\end{align}
Again, analogous estimates hold for the respective processes on the ask side.

\textbf{Step 4:} We prove that $\hat{S}^{(n)}$ is indeed an approximation for $S^{(n)}$. Applying equations \eqref{eq:v-hat-tilde}, \eqref{eq:cP}, \eqref{eq:cTime}, and \eqref{eq:cT2}, we conclude
\begin{align*}
&\left\|\hat{S}^{(n)}_k - \tilde{S}^{(n)}_k\right\|_{E} \leq \left|\hat{B}^{(n)}_k -\tilde{B}^{(n)}_k\right| + \left\|\hat{v}^{(n)}_{b,k} - \tilde{v}^{(n)}_{b,k}\right\|_{L^2} +
\left|\hat{A}^{(n)}_k -\tilde{A}^{(n)}_k\right| + \left\|\hat{v}^{(n)}_{a,k} - \tilde{v}^{(n)}_{a,k}\right\|_{L^2} +\left|\hat{\tau}^{(n)}_k - \tilde{\tau}^{(n)}_k\right|\\
&\qquad \lesssim  \left|\hat{B}^{(n)}_k -\tilde{B}^{(n)}_k\right|+\tn \sum_{j=1}^k\left|\hat{B}^{(n)}_{j-1} - \tilde{B}^{(n)}_{j-1}\right| + \left|\hat{A}^{(n)}_k -\tilde{A}^{(n)}_k\right|+\tn \sum_{j=1}^k\left|\hat{A}^{(n)}_{j-1} - \tilde{A}^{(n)}_{j-1}\right|\\ &\qquad\qquad+\tn \sum_{j=1}^k \left\|S^{(n)}_{j-1} - \tilde{S}^{(n)}_{j-1}\right\|_{E} + \left|\hat{\tau}^{(n)}_k - \tilde{\tau}^{(n)}_k\right|\\
&\qquad \lesssim  \sup_{j\leq k}\left|\hat{B}^{(n)}_j -\tilde{B}^{(n)}_j\right| + \sup_{j\leq k}\left|\hat{A}^{(n)}_j -\tilde{A}^{(n)}_j\right| + \tn \sum_{j=1}^k \left\|S^{(n)}_{j-1} - \tilde{S}^{(n)}_{j-1}\right\|_{E}+ \sup_{j\leq k}\left|\hat{\tau}^{(n)}_j - \tilde{\tau}^{(n)}_j\right|\\
& \qquad \lesssim \tn \sum_{j=1}^k\left\|S^{(n)}_{j-1} - \tilde{S}^{(n)}_{j-1}\right\|_{E}+a_n\\
&\qquad \lesssim \tn \sum_{j=1}^{k} \left(\left\|S^{(n)}_{j-1} - \hat{S}^{(n)}_{j-1}\right\|_{E}+ \left\|\hat{S}^{(n)}_{j-1} - \tilde{S}^{(n)}_{j-1}\right\|_{E}\right)+a_n.
\end{align*}
Applying the discrete version of the Gronwall Lemma, we have for all $k\leq T_n$,
\begin{equation}\label{eq:ineqGW}
\left\|\hat{S}^{(n)}_k - \tilde{S}^{(n)}_k\right\|_{E} \lesssim \tn \sum_{j=1}^k \left\|S^{(n)}_{j-1}-\hat{S}^{(n)}_{j-1}\right\|_{E} + a_n\lesssim T \sup_{j\leq k-1} \left\|S^{(n)}_{j}-\hat{S}^{(n)}_{j}\right\|_{E} + a_n.
\end{equation}
Now equations \eqref{eq:cR}, \eqref{eq:cT1}, and \eqref{eq:ineqGW} yield for $i\leq T_n$,
\begin{align}\label{eq:ineqPr}
\E\left[\sup_{k\leq i} \left(\hat{B}^{(n)}_k - B^{(n)}_k\right)^2\right] &\lesssim \tn \sum_{j=1}^i \E\left\|S^{(n)}_{j-1} - \tilde{S}^{(n)}_{j-1}\right\|^2_{E}+a_n\notag\\
&\lesssim \tn \sum_{j=1}^i \left( \E\left\|S^{(n)}_{j-1} - \hat{S}^{(n)}_{j-1}\right\|^2_{E} + \E\left\|\hat{S}^{(n)}_{j-1} - \tilde{S}^{(n)}_{j-1}\right\|^2_{E}\right)+a_n\notag\\
&\lesssim \tn \sum_{j=1}^i \E\left[\sup_{l \leq j-1} \left\|S^{(n)}_l - \hat{S}^{(n)}_l\right\|_{E}^2\right] + a_n
\end{align}
and a similar estimate holds for the best ask price and its approximation. 
Hence, we can conclude for all $i \leq T_n$ that
\begin{align*}
\E\left[\sup_{k\leq i} \left\|\hat{S}^{(n)}_k - S^{(n)}_k \right\|^2_{E}\right] &\leq \E\left[\sup_{k\leq i} \left(\hat{B}^{(n)}_k-B^{(n)}_k\right)^2\right] +\E\left[\sup_{k\leq i} \left(\hat{A}^{(n)}_k-A^{(n)}_k\right)^2\right]+ \E\left[\sup_{k\leq i} \left| \bar{\tau}^{(n)}_k - \tau^{(n)}_k\right|^2\right]\\
&\hspace{2cm} + 2\left(\E\left[\sup_{k\leq i} \left\|\hat{v}^{(n)}_{b,k} - \bar{v}^{(n)}_{b,k}\right\|_{L^2}^2\right] + \E\left[\sup_{k\leq i} \left\|\bar{v}^{(n)}_{b,k} - v^{(n)}_{b,k}\right\|_{L^2}^2\right]\right)\\
&\hspace{2cm}+2\left(\E\left[\sup_{k\leq i} \left\|\hat{v}^{(n)}_{a,k} - \bar{v}^{(n)}_{a,k}\right\|_{L^2}^2\right] + \E\left[\sup_{k\leq i} \left\|\bar{v}^{(n)}_{a,k} - v^{(n)}_{a,k}\right\|_{L^2}^2\right]\right)\\
& \overset{(1)}{\lesssim} \E\left[\sup_{k\leq i} \left(\hat{B}^{(n)}_k - B^{(n)}_k\right)^2\right] + \E\left[\sup_{k\leq i} \left(\hat{A}^{(n)}_k - A^{(n)}_k\right)^2\right] +a_n\\
& \overset{(2)}{\lesssim} \tn \sum_{j=1}^i \E\left[\sup_{l \leq j-1} \left\|S^{(n)}_l - \hat{S}^{(n)}_l\right\|^2_{E}\right] + a_n.
\end{align*}
In (1) we plugged in the estimate in \eqref{eq:v-bar-hat} and used step 1, while in (2) we used \eqref{eq:ineqPr}. With the discrete Gronwall Lemma we conclude that
\[\E\left[\sup_{k\leq i} \left\|S^{(n)}_k - \hat{S}^{(n)}_k\right\|^2_{E}\right] \lesssim a_n \left(1 +\tn \sum_{j=1}^i e^{\sum_{m=j+1}^i C \tn}\right) =o(1).\]

 \textbf{Step 5:} The result now follows from 
\begin{align*}
\E\left[\sup_{k\leq T_n} \left\|S^{(n)}_k - \tilde{S}^{(n)}_k\right\|^2_{E}\right]& \leq 2\E \left[\sup_{k\leq T_n} \left\|S^{(n)}_k - \hat{S}^{(n)}_k\right\|^2_{E}\right] + 2\E\left[\sup_{k\leq T_n} \left\|\hat{S}^{(n)}_k - \tilde{S}^{(n)}_k\right\|^2_{E}\right]\\
&\overset{(1)}{\lesssim} \E\left[\sup_{k\leq T_n}\left\|S^{(n)}_k - \hat{S}^{(n)}_k\right\|^2_{E}\right] + a_n \rightarrow 0,
\end{align*}
where we used in (1) the inequality in \eqref{eq:ineqGW} and the final convergence follows from step 4.

\subsection{Proof of Theorem \ref{res:conAV}}\label{proof:convSeq}

According to Lemma \ref{lem:uLip}, it is enough to prove the convergence result on the subspace $\tilde{E} \subset E,$ where
\[\tilde{E}:= \left\{(b,v_b, a, v_a, t) \in E:  \|v_I(\cdot + x)- v_I(\cdot + \tilde{x})\|_{L^2} \leq L(1+T)|x-\tilde{x}|  \quad \forall\, x, \tilde{x} \in \R,\ I = b,a \right\}.\]
The space $\tilde{E}$ endowed with the norm $\|\cdot\|_{E}$ is a closed subspace of $E$ and hence again a Banach space. Considering the coefficient functions only on the subspace $\tilde{E}$ will ensure that their limits are still Lipschitz-continuous as compositions of the Lipschitz-continuous functions $p_b,$ $p_a,$ $r_b,$ $r_a$, $\theta_b,$ $\theta_a,$ $\varphi$, and (shifted) $f_b,$ $f_a$ with $\psi$ (cf. Lemma \ref{lem:LG}).
 
 \begin{proof}[\textbf{Proof of Theorem \ref{res:conAV}}]
 We first note that $\tilde{\eta}^{(n), abs}$ takes values in the Banach space $(\tilde{E}, \|\cdot\|_{E})$ by Lemma \ref{lem:uLip}. Hence, we can restrict ourselves to this space and prove the statement of Theorem \ref{res:conAV} only in the closed subspace $\mathcal{D}(\tilde{E}; [0,T])$. We need to show three things: the sequence $(\tilde{\eta}^{(n),abs})_{n\in\N}$ is relative compact, any limit point satisfies \eqref{eq:SDEabs}, and there exists at most one solution $\eta^{abs}$ of \eqref{eq:SDEabs}.\par

In order to prove the first two things, we will apply Theorem 7.6 in \cite{KurtzProtter2}. Let us verify its conditions: 
Thanks to Proposition \ref{res:limitSJ} and Proposition \ref{res:limitLJ}, we have for any $m\in\N$ and $g_1, \cdots, g_m\in C_b([-M,M]^2)$,
\[ (Z_b^{(n)}, Z_a^{(n)}) \Rightarrow (Z_b, Z_a) \quad \text{ and } \quad \left(X^{(n)}(\cdot, g_1), \cdots, X^{(n)}(\cdot, g_m)\right) \Rightarrow \left(X(\cdot, g_1), \cdots, X(\cdot, g_m)\right)\]
 in $\mathcal{D}(\R^2; [0,T])$ and $\mathcal{D}(\R^{2m}; [0,T])$, respectively. Since $(Z_b, Z_a)$ is a standard planar Brownian motion, its paths are almost surely continuous. Hence, Corollary 3.33 in \cite{JacodShiryaev} implies the joint convergence of the integrators $(Z^{(n)}_b, Z^{(n)}_a)$ and  $(X^{(n)}(\cdot, g_1),\cdots, X^{(n)}(\cdot, g_m))$ for any $m\in\N$ and $g_1, \cdots, g_m\in C_b([-M,M]^2)$. By Theorem \ref{res:limitLJ}, $X(\cdot, g)$ is a pure jump L\'evy process for all $g\in C_b([-M,M]^2).$ Hence, the quadratic covariation of $X(\cdot, g)$ with $Z_I, I = b,a,$ is  equal to zero almost surely and $X$ is independent of $(Z_b,Z_a)$. Moreover, the sequence of integrators is uniformly tight by Lemma \ref{res:UT}. By Assumption \ref{ass:IV}, we have
\[S^{(n),abs}_0 \rightarrow \left(B_0, v_{b,0}(-(\cdot - B_0)), A_0, v_{a,0}(\cdot - A_0), 0\right) = S^{abs}_0.\]
Since $S^{abs}_0$ is deterministic, we conclude the joint convergence of $S^{(n),abs}_0,Z_b^{(n)},Z_a^{(n)}$, and $X^{(n)}$.

  As the shift operator $\psi$ maps elements of $\tilde{E}$ to $E$, we can apply Assumptions \ref{ass:limitFct} and \ref{ass:ExQ} to conclude the uniform convergence of the coefficient functions composed with $\psi$. Together with Lemma \ref{lem:LG} this implies that Condition C.2 of \cite{KurtzProtter2} is satisfied. Moreover, as the shift operator $\psi$ maps elements of $\tilde{E}$ to $E$, it is sufficient to prove the boundedness and compactness requirements with respect to the unshifted coefficient functions considered on the larger space $E$. To this end, Assumptions \ref{ass:randTimes} iii), \ref{ass:probVol} iii), \ref{ass:Prob}, and \ref{ass:ExQ} imply that for $I=b,a$,
\begin{equation*}
 \sup_{n\in\N} \sup_{s\in E} \left\{|p_I^{(n)}(s)|+|r_I^{(n)}(s)|+\big\|\theta_I^{(n)}(s,\cdot)\big\|_{L^\infty}+\big\|f_I^{(n)}[s]\big\|_{L^2}+|\varphi^{(n)}(s)| \right\}< \infty.
\end{equation*}
The required compactness condition now follows from the Heine-Borel theorem, the Fr\'echet-Kolmogorov theorem in combination with the uniform equicontinuity and equitightness of $f_I,\ I=b,a$, cf.~Assumption \ref{ass:probVol} iii), and the Arzel\`a-Ascoli theorem in combination with Assumption \ref{ass:ExQ} i). Hence, the requirements of Theorem 7.6 in \cite{KurtzProtter2} are satisfied and we may conclude that the sequence $(\tilde{\eta}^{(n),abs})_{n\in \N}$ is relatively compact and any limit point satisfies \eqref{eq:SDEabs}.\par

 Next, we will show uniqueness of a strong solution to \eqref{eq:SDEabs} by a standard Gronwall argument. Suppose $\eta = (B,v_b, A, v_a, \tau),\, \tilde{\eta} = (\tilde{B}, \tilde{v}_b, \tilde{A}, \tilde{v}_a, \tilde{\tau})$ are two strong solutions to \eqref{eq:SDEabs}.
Then the Lipschitz continuity of the coefficient functions (cf.~Lemma \ref{lem:LG}) implies:
\begin{align*}
    \E&\left[\sup_{s \leq t} \left|B(s)- \tilde{B}(s)\right|^2\right] \\
    &\leq 4 \E\left[\sup_{s \leq t}\left\{ \left|\int_0^s \left(p_b(\psi(\eta(u-))) - p_b(\psi(\tilde{\eta}(u-)))\right) du\right|^2 \right. \right.\\
    &\hspace{3cm} + \left| \int_0^s \left(r_b(\psi(\eta(u-))) - r(\psi(\tilde{\eta}(u-)))\right) dZ_b(u)\right|^2 \\
    &\hspace{3cm}  + \left| \int_0^s \int_{[-M,M]} \left(\theta_b(\psi(\eta(u-)), y) - \theta_b(\psi(\tilde{\eta}(u-)), y) \right) \left(\mu^{Q}_b- \nu^Q_b\right)(du, dy)\right|^2 \\
    &\hspace{3cm} \left.\left.+ \left| \int_0^s \int_{[-M,M]} \left(\theta_b(\psi(\eta(u-)),y ) - \theta_b(\psi(\tilde{\eta}(u-)), y)\right) \nu^Q_b(du, dy) \right|^2 \right\}\right]\\
    & \overset{(1)}{\lesssim}  
    \int_0^t \E\left|p_n(\psi(\eta(u-))) - p_b(\psi(\tilde{\eta}(u-)))\right|^2 du +  \int_0^t \E\left| r_b(\psi(\eta(u-))) - r_b(\psi(\tilde{\eta}(u-)))\right|^2 du\\
    &\qquad+ \int_0^t \int_{[-M,M]} \E \left|\theta_b(\psi(\eta(u-)),y) - \theta_b(\psi(\tilde{\eta}(u-)), y)\right|^2 Q_b(dy)\, du\\
    &\overset{(2)}{\lesssim} \int_0^t \E\left\|\eta(u-)- \tilde{\eta}(u-)\right\|^2_E du.
\end{align*}
Here, in (1) we applied the Burkholder-Davis-Gundy inequality for $p=2$ and in (2) we applied the Lipschitz continuity of the coefficient functions. Similar estimates were already done in the proof of Theorem \ref{res:convApprox} and are thus omitted for the other componentwise differences of $\eta$ and $\tilde{\eta}$, which (up to a multiplicative constant) can be bounded by the same term. Hence, an application of Gronwall's Lemma yields that $\eta = \tilde{\eta}$ almost surely. Now, Corollary 7.8 in \cite{KurtzProtter2} yields the existence of a unique strong solution of \eqref{eq:SDEabs} in the space $(\tilde{E}, \| \cdot\|_{E}).$\par
Hence, $\tilde{\eta}^{(n),abs} \Rightarrow \eta^{abs}$ in $\mathcal{D}(\tilde{E}; [0,T])$, where $\eta^{abs}$ is the unique solution to \eqref{eq:SDEabs}. Since $\tilde{E} \subset E$ is closed, we moreover deduce the weak convergence in $\mathcal{D}(E; [0,T])$.
\end{proof}

\appendix

\section{Auxiliary results}

\begin{Lem}\label{lem:theta}
Let Assumptions \ref{ass:ExQ} i) and v) be satisfied. Then for all $j\in \Z^{(n)}_M$, $I = b,a,$ and $s\in E$,
\[K^{(n)}_I\Big(s,\big\{\theta^{(n)}_I(s,x_j^{(n)})\big\}\Big)=K^{(n)}_I\left(s,\theta_I\Big(s,\left[x_{j}^{(n)},x_{j+1}^{(n)}\right)\Big)\right).\]
\end{Lem}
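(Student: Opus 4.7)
The approach is to reduce the identity to an atom-by-atom comparison for the purely atomic measure $K_I^{(n)}(s,\cdot)$. By \eqref{def:coefFct-lP}, $K_I^{(n)}(s,\cdot)=\sum_i k_{I,i}^{(n)}(s)\varepsilon_{x_i^{(n)}}$, so both sides of the claimed identity are sums of weights $k_{I,i}^{(n)}(s)$ indexed by certain grid points. It therefore suffices to show that for every $i$ with $k_{I,i}^{(n)}(s)>0$,
\[x_i^{(n)}\in\theta_I\bigl(s,[x_j^{(n)},x_{j+1}^{(n)})\bigr)\quad\Longleftrightarrow\quad x_i^{(n)}=\theta_I^{(n)}(s,x_j^{(n)}).\]

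For the forward direction, fix such an $i$ and write $x_i^{(n)}=\theta_I(s,y)$ for some $y\in[x_j^{(n)},x_{j+1}^{(n)})$. Assumption \ref{ass:ExQ} v) supplies a unique $m\in\Z^{(n)}_M$ with $x_i^{(n)}=\theta_I^{(n)}(s,x_m^{(n)})$, and the ceiling definition \eqref{def:thetaN1} forces $\theta_I(s,x_m^{(n)})\in(x_i^{(n)}-\xn,x_i^{(n)}]$. Combining this with strict monotonicity of $\theta_I(s,\cdot)$ wherever it is nonzero (Assumption \ref{ass:ExQ} i)), I would perform a case analysis on $m$: the alternatives $m>j$ and $m<j$ can each be ruled out because they place the grid point $x_m^{(n)}$ into the open interval $(x_j^{(n)},x_{j+1}^{(n)})$, which contains no points of the $\xn$-grid. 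Hence $m=j$ and therefore $x_i^{(n)}=\theta_I^{(n)}(s,x_j^{(n)})$.

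For the backward direction, assume $x_i^{(n)}=\theta_I^{(n)}(s,x_j^{(n)})$ with $k_{I,i}^{(n)}(s)>0$. The case $\theta_I(s,x_j^{(n)})=0$ is immediate, since then $x_i^{(n)}=0=\theta_I(s,x_j^{(n)})\in\theta_I\bigl(s,[x_j^{(n)},x_{j+1}^{(n)})\bigr)$. Otherwise, Assumption \ref{ass:ExQ} i) yields strict monotonicity of $\theta_I(s,\cdot)$ near $x_j^{(n)}$, and one needs to rule out $x_i^{(n)}\geq\theta_I(s,x_{j+1}^{(n)})$. If this inequality held, monotonicity of the ceiling in \eqref{def:thetaN1} combined with $\theta_I(s,x_j^{(n)})<\theta_I(s,x_{j+1}^{(n)})\leq x_i^{(n)}$ would pin down $\theta_I^{(n)}(s,x_{j+1}^{(n)})=x_i^{(n)}=\theta_I^{(n)}(s,x_j^{(n)})$; Assumption \ref{ass:ExQ} v) applied to the positive-mass atom $x_i^{(n)}$ would then give the absurdity $j=j+1$.

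The main obstacle is the interplay between the ceiling discretization in \eqref{def:thetaN1} and the atomic support of $K_I^{(n)}(s,\cdot)$: the ceiling can collapse two adjacent grid inputs into a single grid output, and only the uniqueness guaranteed by Assumption \ref{ass:ExQ} v) on positive-mass atoms rules out such collisions when they would otherwise contradict the claimed identity.
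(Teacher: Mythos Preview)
Your overall strategy—reducing to an atom-by-atom equivalence for the purely atomic measure $K_I^{(n)}(s,\cdot)$—is the same as the paper's, and your backward direction is essentially the paper's first case; that part is sound.

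The forward direction, however, has a genuine gap. Your stated reason for ruling out $m\neq j$ (``they place the grid point $x_m^{(n)}$ into the open interval $(x_j^{(n)},x_{j+1}^{(n)})$'') is simply false: if $m>j$ then $x_m^{(n)}\geq x_{j+1}^{(n)}$, and if $m<j$ then $x_m^{(n)}\leq x_{j-1}^{(n)}$; in neither case does $x_m^{(n)}$ land in that open interval. The case $m>j$ can be salvaged by monotonicity alone: $\theta_I(s,x_m^{(n)})\leq x_i^{(n)}=\theta_I(s,y)$ together with injectivity of $\theta_I(s,\cdot)$ on its nonzero set forces $x_m^{(n)}\leq y<x_{j+1}^{(n)}$, contradicting $m>j$. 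But the case $m<j$ genuinely needs a different idea—namely, a \emph{second} appeal to the uniqueness in Assumption~\ref{ass:ExQ}~v). Since $x_m^{(n)}<x_j^{(n)}\leq y$, monotonicity sandwiches $\theta_I(s,x_j^{(n)})$ between $\theta_I(s,x_m^{(n)})\in(x_i^{(n)}-\xn,x_i^{(n)}]$ and $\theta_I(s,y)=x_i^{(n)}$, so $\theta_I(s,x_j^{(n)})\in(x_i^{(n)}-\xn,x_i^{(n)}]$ as well, giving $\theta_I^{(n)}(s,x_j^{(n)})=x_i^{(n)}=\theta_I^{(n)}(s,x_m^{(n)})$; uniqueness then yields the contradiction $j=m$. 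This is precisely how the paper handles the corresponding step (phrased there via direct ceiling arithmetic showing $\theta_I^{(n)}(s,x_j^{(n)})\leq\theta_I(s,x)<\theta_I^{(n)}(s,x_{j+1}^{(n)})$ and then invoking uniqueness). Your sketch misses that the $m<j$ branch cannot be closed by monotonicity alone.
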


\begin{proof}
First suppose that $K^{(n)}_I\Big(s,\big\{\theta^{(n)}_I(s,x_j^{(n)})\big\}\Big)>0$. We want to show that in this case,
\begin{equation}\label{eq:thetaInt}
    \theta^{(n)}_I(s,x_j^{(n)})\in\theta_I\Big(s,\left[x_{j}^{(n)},x_{j+1}^{(n)}\right)\Big).
\end{equation}
In fact, by definition
\[\theta^{(n)}_I(s,x_j^{(n)})\in\Big[\theta_I(s,x_j^{(n)}), \theta_I(s,x_j^{(n)}) + \xn\Big).\]
Suppose that $\theta_I(s,x_{j+1}^{(n)})<\theta_I(s,x_j^{(n)})+\xn$ (otherwise, the relation in \eqref{eq:thetaInt} is already satisfied). Then,
\[\theta^{(n)}_I(s,x_{j+1}^{(n)})=\left\lceil\frac{\theta_I(s,x^{(n)}_{j+1})}{\xn}\right\rceil\cdot\xn\leq \left\lceil\frac{\theta_I(s,x^{(n)}_{j})+\xn}{\xn}\right\rceil\cdot\xn=\theta_I^{(n)}(s,x_j^{(n)}) + \xn.\]
By Assumption \ref{ass:ExQ} v), we must have equality in the above line, which implies that 
\[\theta_I(s,x_{j+1}^{(n)}) > \theta^{(n)}_I(s,x_j^{(n)}).\]
Hence, \eqref{eq:thetaInt} is satisfied.\par 
Second, suppose that there exists $x\in \left[x_{j}^{(n)},x_{j+1}^{(n)}\right)$ with $\theta_I(s,x)\in \xn \Z$ and $K^{(n)}_I(s,\theta(s,x))>0$. Then by Assumption \ref{ass:ExQ} v) there exists $i\in\N$ with $\theta_I(s,x)=\theta^{(n)}_I(s,x_i^{(n)})$. By strict  monotonicity of $\theta_I$ (cf.~Assumption \ref{ass:ExQ} i)) we have
\[\theta^{(n)}_I(s,x_j^{(n)})=\left\lceil\frac{\theta_I(s,x_j^{(n)})}{\xn}\right\rceil\cdot\xn\leq \left\lceil\frac{\theta_I(s,x)}{\xn}\right\rceil\cdot\xn=\theta_I(s,x)\]
and
\[\theta_I(s,x)=\left\lceil\frac{\theta_I(s,x)}{\xn}\right\rceil\cdot\xn < \left\lceil\frac{\theta_I(s,x_{j+1}^{(n)})}{\xn}\right\rceil\cdot\xn=\theta^{(n)}_I(s,x_{j+1}^{(n)}).\]
Hence, we must have $i=j$, i.e.~$\theta_I(s,x)=\theta^{(n)}_I(s,x_j^{(n)})$.
\end{proof}

\begin{Lem}\label{lem:uLip}
Under Assumptions \ref{ass:IV} and \ref{ass:probVol} the absolute volume dynamics introduced in \eqref{def:absVol} satisfy for all $n \in \N$, $k=0,\dots,T_n$, and $I=b,a$ the estimate
\[\Big\|\tilde{u}^{(n)}_{I,k}(\cdot + x) - \tilde{u}^{(n)}_{I,k}(\cdot + \tilde{x})\Big\|_{L^2} \leq L(1+T)|x - \tilde{x}|\quad \forall x,\tilde{x}\in\R.\]
\end{Lem}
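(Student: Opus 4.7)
The plan is to unfold the definition of $\tilde{u}^{(n)}_{I,k}$ in terms of $\tilde{v}^{(n)}_{I,k}$ and then in terms of the primitive quantities $v_{I,0}$ and $f_I[\cdot]$, apply the triangle inequality componentwise in the sum representing $\tilde{v}^{(n)}_{I,k}$, and then invoke the Lipschitz-in-translation properties provided by Assumption \ref{ass:IV} (inequality \eqref{ass:Lcond}) and Assumption \ref{ass:probVol} iii).

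Concretely, for the bid side, substituting \eqref{def:absVol} into the definition of $\tilde{v}_{b,k}^{(n)}$ from Step 3 of the sketch yields
\[
\tilde{u}^{(n)}_{b,k}(y)=v_{b,0}\bigl(-y+B^{(n)}_0\bigr)+\sum_{j=1}^k f_b\bigl[\tilde{S}^{(n)}_{j-1}\bigr]\bigl(-y+\tilde{B}^{(n)}_{j-1}\bigr)\,\tn.
\]
Shifting the argument by $x$ and $\tilde{x}$, subtracting, and applying the triangle inequality in $L^2(\R)$, it suffices to bound each of the $k+1$ pieces of the form $\|g(-\cdot-x+c)-g(-\cdot-\tilde{x}+c)\|_{L^2}$ where $g\in\{v_{b,0},\,f_b[\tilde{S}^{(n)}_{j-1}]\}$ and $c$ is a random constant. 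The key technical step is the substitution $z=-y+c$ inside the $L^2$-integral, which is measure preserving and turns the above norm into $\|g(\cdot-x)-g(\cdot-\tilde{x})\|_{L^2}$; this reduces the estimate to the translation-Lipschitz conditions \eqref{ass:Lcond} and Assumption \ref{ass:probVol} iii), giving the bound $L|x-\tilde{x}|$ for each summand.

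Summing, we obtain
\[
\bigl\|\tilde{u}^{(n)}_{b,k}(\cdot+x)-\tilde{u}^{(n)}_{b,k}(\cdot+\tilde{x})\bigr\|_{L^2}\leq L|x-\tilde{x}|+\sum_{j=1}^k L|x-\tilde{x}|\,\tn\leq L(1+k\tn)|x-\tilde{x}|\leq L(1+T)|x-\tilde{x}|,
\]
using $k\tn\leq T_n\tn\leq T$. The ask side is completely analogous — and in fact marginally easier, since $\tilde{u}^{(n)}_{a,k}(y)=v_{a,0}(y-A^{(n)}_0)+\sum_{j=1}^k f_a[\tilde{S}^{(n)}_{j-1}](y-\tilde{A}^{(n)}_{j-1})\tn$ does not involve the reflection, so one can directly apply \eqref{ass:Lcond} and Assumption \ref{ass:probVol} iii) without the change of variables.

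There is no real obstacle: the statement is essentially a book-keeping exercise that propagates the translation-Lipschitz property of $v_{I,0}$ and $f_I[\cdot]$ through the definition of $\tilde{u}^{(n)}_{I,k}$. The only mild subtlety is recognizing that the reflection $y\mapsto-y+\tilde{B}^{(n)}_k$ on the bid side is an $L^2$-isometry, so that the stated Lipschitz conditions — which are formulated in terms of translations — still apply; this is handled by the measure-preserving substitution mentioned above. The factor $1+T$ (rather than just $T$) arises precisely because the initial condition $v_{I,0}$ contributes one extra term alongside the $k$ integrated increments.
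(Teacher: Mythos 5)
Your proof is correct and follows essentially the same route as the paper's: unfold the definition of $\tilde{u}^{(n)}_{I,k}$, apply the triangle inequality in $L^2$, and invoke the translation-Lipschitz conditions \eqref{ass:Lcond} and Assumption \ref{ass:probVol} iii) together with $k\tn\leq T$. The paper's version is just terser — it does not spell out that the reflection on the bid side is an $L^2$-isometry, which you handle explicitly via the measure-preserving substitution.
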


\begin{proof}
By \eqref{ass:Lcond} and Assumption \ref{ass:probVol} iii), we derive for all $n \in \N,$ $k=0,\dots,T_n$, and $x, \tilde{x} \in \R,$
\begin{align*}
\Big\|\tilde{u}^{(n)}_{I,k}(\cdot + x) &- \tilde{u}^{(n)}_{I,k}(\cdot + \tilde{x})\Big\|_{L^2}\\
&\leq \left\|v_{I,0}(\cdot + x) - v_{I,0}(\cdot + \tilde{x}) \right\|_{L^2} + \tn \sum_{j=1}^k\left\|f_I[\tilde{S}^{(n)}_{j-1}](\cdot + x) - f_I[\tilde{S}^{(n)}_{j-1}](\cdot + \tilde{x})\right\|_{L^2}\\
&\leq L|x-\tilde{x}| + T\sup_{s \in E} \|f_I[s](\cdot +x) - f_I[s](\cdot + \tilde{x})\|_{L^2}\\
&\leq L(1+T) |x - \tilde{x}|.
\end{align*}
\end{proof}
  
\begin{Lem} \label{lem:LG}
Let Assumptions \ref{ass:randTimes}, \ref{ass:probVol}, \ref{ass:limitFct}, and \ref{ass:ExQ} be satisfied. Then the composed coefficient functions $p_b\circ\psi,p_a\circ\psi,r_b\circ\psi,r_a\circ\psi,f_b\circ\psi,f_a\circ\psi,\theta_b\circ\psi,\theta_a\circ\psi$, and $\varphi\circ\psi$ restricted to the subspace $\tilde{E}$ are Lipschitz continuous.
\end{Lem}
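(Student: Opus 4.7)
My approach is to reduce the claim to two facts: (i) the shift operator $\psi$ is Lipschitz continuous as a map from $\tilde{E}$ into $E$, and (ii) each of the ``uncomposed'' coefficient functions is Lipschitz continuous on $E$ (jointly in $s$ and, where applicable, uniformly in the spatial argument). Since the composition of Lipschitz maps is Lipschitz, combining (i) and (ii) immediately gives the lemma.

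For step (i), let $s = (b, v, a, w, t)$ and $\tilde{s} = (\tilde{b}, \tilde{v}, \tilde{a}, \tilde{w}, \tilde{t})$ belong to $\tilde{E}$. The $b$, $a$, $t$ components contribute to $\|\psi(s) - \psi(\tilde{s})\|_E$ with constant $1$. For the bid-side volume, the triangle inequality yields
\[ \bigl\|v(-(\cdot - b)) - \tilde{v}(-(\cdot - \tilde{b}))\bigr\|_{L^2} \leq \bigl\|v(-(\cdot - b)) - v(-(\cdot - \tilde{b}))\bigr\|_{L^2} + \bigl\|v(-(\cdot - \tilde{b})) - \tilde{v}(-(\cdot - \tilde{b}))\bigr\|_{L^2}, \]
where the second term equals $\|v - \tilde{v}\|_{L^2}$ by translation invariance of Lebesgue measure, and the first term, after the substitution $y = -(x - \tilde{b})$, equals $\|v(\cdot + \tilde{b} - b) - v(\cdot)\|_{L^2}$, which is at most $L(1+T)|b - \tilde{b}|$ by the defining property of $\tilde{E}$. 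An analogous estimate applies to the ask-side volume. Collecting these bounds shows $\|\psi(s) - \psi(\tilde{s})\|_E \leq C\|s - \tilde{s}\|_E$ for a constant $C$ depending only on $L$ and $T$.

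For step (ii), Assumption \ref{ass:randTimes} iii) gives Lipschitz continuity of $\varphi$, Assumption \ref{ass:limitFct} does the same for $p_b, p_a, r_b, r_a$, Assumption \ref{ass:probVol} iii) yields Lipschitz continuity of $f_b, f_a: E \to L^2(\R)$, and Assumption \ref{ass:ExQ} iv) provides Lipschitz continuity of $\theta_b, \theta_a$ in $s$ uniformly in $y \in [-M,M]$. Precomposing each of these with the Lipschitz map $\psi|_{\tilde{E}}$ and using the Lipschitz bound from step (i) produces the claimed Lipschitz constants for the composed functions.

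The only substantive obstacle lies in step (i): a generic element of $E$ admits no modulus of continuity under translation, so $\psi$ cannot be Lipschitz on all of $E$ (its Lipschitz behaviour in the shift parameter would require control of $\|v(\cdot + h) - v(\cdot)\|_{L^2}$, which can be arbitrarily bad for general $v \in L^2(\R)$). The restriction to $\tilde{E}$, whose elements by construction carry a uniform translation-Lipschitz estimate with constant $L(1+T)$, is exactly what makes the argument go through, and it is this observation that motivates the formulation of Theorem \ref{res:conAV} on the closed subspace $\tilde{E}$ in the first place.
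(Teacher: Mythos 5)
Your proof is correct and follows essentially the same route as the paper: establish that $\psi$ is Lipschitz from $\tilde{E}$ into $E$ by splitting the shifted volume terms via the triangle inequality and invoking the translation-Lipschitz property built into $\tilde{E}$, then compose with the coefficient functions whose Lipschitz continuity on $E$ is supplied by Assumptions \ref{ass:randTimes} iii), \ref{ass:probVol} iii), \ref{ass:limitFct}, and \ref{ass:ExQ} iv). Your closing remark on why the restriction to $\tilde{E}$ is indispensable is a nice addition, and you correctly cite part iv) of Assumption \ref{ass:ExQ} where the paper's proof contains a slip (it cites part v)).
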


\newpage 
  
\begin{proof}
First note that for any $s=(b,v_b, a, v_a, t),\ \tilde{s}=(\tilde{b},\tilde{v}_b, \tilde{a}, \tilde{v}_a, \tilde{t})\in \tilde{E}$,
\begin{align*}
\|\psi(\tilde{s})- \psi(s)\|^2_{E}&=\Big\{ |\tilde{b} - b|^2 +\left\|\tilde{v}_b(-(\cdot - \tilde{b})) - v_b(-(\cdot - b))\right\|^2_{L^2}\\
&\hspace{2cm} + |\tilde{a}-a|^2 + \left\|\tilde{v}_a(\cdot-\tilde{a}) - v_a(\cdot - a)\right\|^2_{L^2} + |\tilde{t} - t|^2\Big\}\\
&\leq \left\{\|\tilde{s}-s\|_{E}^2 +  \left\|v_b(-(\cdot - \tilde{b})) - v_b( -(\cdot - b))\right\|^2_{L^2} + \left\|v_a(\cdot -\tilde{a}) - v_a(\cdot - a)\right\|^2_{L^2}\right\}\\
&\leq \left\{\|\tilde{s}-s\|_{E}^2 + (L^2(1+T)^2)\left(|\tilde{b}-b|^2 +|\tilde{a} - a|^2\right)\right\}\\
& \leq (1+L^2(1+T)^2)\left\|\tilde{s}-s\right\|^2_{E}.
\end{align*}
Now the claim follows from the Lipschitz continuity of $p_b, p_a, r_b, r_a, f_b, f_a, \theta_b,\theta_a$, and $\varphi$ on the larger space $E$, cf.~Assumptions \ref{ass:randTimes} iii), \ref{ass:probVol} iii), \ref{ass:limitFct}, and \ref{ass:ExQ} v).
\end{proof}

 \begin{Lem}\label{res:UT}
Suppose Assumptions \ref{ass:Prob}, \ref{ass:ExQ}, and \ref{ass:Scaling} are satisfied.
Then the sequence $(Z_b^{(n)},Z_a^{(n)},X^{(n)})_{n\in\N}$ is uniformly tight in the sense of \cite{KurtzProtter2}.
\end{Lem}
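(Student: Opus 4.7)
The plan is to verify the sufficient condition for uniform tightness given in \cite{KurtzProtter2}: each semimartingale in the sequence should admit a decomposition into a local martingale whose predictable quadratic variation on $[0,T]$ has uniformly bounded expectation, plus a predictable finite-variation part whose total variation on $[0,T]$ has uniformly bounded expectation. Since uniform tightness for a sequence indexed by the separable space $C_b([-M,M]^2)$ reduces to uniform tightness of each finite-dimensional projection, I would first treat the pair $(Z_b^{(n)}, Z_a^{(n)})$ and then each $X^{(n)}(\cdot,g)$ for fixed $g\in C_b([-M,M]^2)$, and then conclude joint uniform tightness componentwise for any finite selection $g_1,\dots,g_m$.

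For $(Z_b^{(n)}, Z_a^{(n)})$ the decomposition is trivial, as each $Z_I^{(n)}$ is a square-integrable martingale by construction. The computation performed in the proof of Proposition \ref{res:limitSJ} already yields, for $I=b,a$,
\begin{equation*}
\langle Z_I^{(n)}\rangle_T = \sum_{k=1}^{T_n}\frac{\tn(r_I^{(n)}(S_{k-1}^{(n)}))^2 - (\tn p_I^{(n)}(S_{k-1}^{(n)}))^2}{(r_I^{(n)}(S_{k-1}^{(n)}))^2}\leq T,
\end{equation*}
from which $\sup_n\E\bigl[[Z_I^{(n)}]_T\bigr]\leq T$ follows immediately.

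For the jump process $X^{(n)}(\cdot, g)$, I would use the canonical compensation decomposition
\begin{equation*}
X^{(n)}(t,g) = \underbrace{\int_0^t\!\!\int g(x,y)\bigl(\mu^{J^{(n)}}-\nu^{J^{(n)}}\bigr)(ds,dx,dy)}_{=:\,N^{(n)}(t,g)} \;+\; \underbrace{\int_0^t\!\!\int g(x,y)\,\nu^{J^{(n)}}(ds,dx,dy)}_{=:\,A^{(n)}(t,g)}.
\end{equation*}
Because $\mu^{J^{(n)}}$ is integer-valued, the martingale part obeys $[N^{(n)}(\cdot,g)]_T\leq\|g\|_\infty^2\,\mu^{J^{(n)}}([0,T]\times[-M,M]^2)$, and the bounded-variation part obeys $\mathrm{TV}(A^{(n)}(\cdot,g))_T\leq\|g\|_\infty\,\nu^{J^{(n)}}([0,T]\times[-M,M]^2)$. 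What is really needed is therefore a uniform bound on the total mass of the compensator, which is the content of the estimate in Remark \ref{rmk:jumpEstimate} applied with $g\equiv 1$ to each of the two marginals $\nu_b^{J^{(n)}}$, $\nu_a^{J^{(n)}}$: summing over $k=1,\dots,T_n$ and over $I=b,a$ yields
\begin{equation*}
\sup_n\,\nu^{J^{(n)}}\bigl([0,T]\times[-M,M]^2\bigr)\;\leq\; T\bigl(Q_b([-M,M])+Q_a([-M,M])\bigr)+o(1),
\end{equation*}
uniformly pathwise. Taking expectation then bounds $\E\bigl[[N^{(n)}(\cdot,g)]_T\bigr]$ and $\E\bigl[\mathrm{TV}(A^{(n)}(\cdot,g))_T\bigr]$ uniformly in $n$, which delivers uniform tightness of the real-valued sequence $(X^{(n)}(\cdot,g))_n$.

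The main subtlety will be to match these componentwise bounds to the precise formulation of uniform tightness used in \cite{KurtzProtter2} for semimartingales indexed by a separable Banach space (rather than by $\R^d$), and to be careful that the pathwise bound coming out of Remark \ref{rmk:jumpEstimate} is indeed $n$-uniform when summed over $k=1,\dots,T_n$; once that identification is in place, the verification reduces to uniform tightness of each finite projection $\bigl(Z_b^{(n)}, Z_a^{(n)}, X^{(n)}(\cdot, g_1), \dots, X^{(n)}(\cdot, g_m)\bigr)$, which follows by concatenation from the two steps above.
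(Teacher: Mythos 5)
Your proposal is correct in substance and rests on exactly the same two quantitative inputs as the paper's proof: the predictable quadratic variation computation $\sum_{k\leq T_n}\E[(\delta Z^{(n)}_{I,k})^2\mid\F_{k-1}]\leq T$ from the proof of Proposition \ref{res:limitSJ}, and the uniform compensator-mass bound $\nu_I^{J^{(n)}}([0,T]\times[-M,M])\leq T(Q_I([-M,M])+a_n)$ obtained from Remark \ref{rmk:jumpEstimate} with $g\equiv1$. The packaging differs: you invoke the sufficient criterion ``local martingale with bounded expected quadratic variation plus predictable finite-variation part with bounded expected total variation,'' whereas the paper verifies the Kurtz--Protter definition directly, by bounding $\E[(\sum_k g^{(n)}_{k-1}\delta Z^{(n)}_{I,k})^2]$ for an \emph{arbitrary} adapted real-valued integrand with $|g^{(n)}_k|\leq1$ and $\E|\sum_k\int h^{(n)}_{k-1}(y)\,\mu^{J^{(n)}}_I([\t_k,\t_{k+1})\times dy)|$ for an \emph{arbitrary} adapted $C_b([-M,M])$-valued integrand with $\|h^{(n)}_k\|_{L^\infty}\leq1$, using positivity of $\mu^{J^{(n)}}_I$ to pass directly to the compensator without any martingale/finite-variation splitting. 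The paper's route is slightly more economical (one $L^1$ estimate instead of two), but both deliver the same conclusion.

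Two imprecisions in your write-up deserve attention. First, for the $C_b([-M,M]^2)$-indexed integrator the uniform tightness condition of \cite{KurtzProtter2} is \emph{not} literally ``uniform tightness of each finite-dimensional projection $(X^{(n)}(\cdot,g_1),\dots,X^{(n)}(\cdot,g_m))$'': the relevant class of integrands consists of adapted processes $Z(t)=\sum_k\xi_k(t)g_k$ constrained only through the combined norm $\sup_t\|Z(t)\|_{L^\infty}\leq1$, which is why the paper works with an arbitrary adapted $C_b$-valued process rather than with fixed test functions. Your estimates do survive this strengthening, because the bound $|\int h\,d\mu^{J^{(n)}}_I|\leq\|h\|_{L^\infty}\,\mu^{J^{(n)}}_I([0,T]\times[-M,M])$ only uses the sup-norm of the integrand, but the reduction as you state it is not the condition you need to check. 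Second, the pathwise inequality $[N^{(n)}(\cdot,g)]_T\leq\|g\|_\infty^2\,\mu^{J^{(n)}}([0,T]\times[-M,M]^2)$ is not quite correct: the jumps of the compensated process $N^{(n)}$ also contain the compensator decrements (the discrete-time compensator is purely discontinuous here), and the Poisson component of $\mu^{J^{(n)}}_I$ in \eqref{def:measJn} may place more than one atom in a single time slot $[\t_k,\t_{k+1})$, so the squared jump is controlled by the square of the slot mass rather than the mass itself. Both defects wash out after taking expectations (the extra terms are $O(\tn)$ respectively $O((\tn)^2)$ per slot), but the inequality as written does not hold almost surely.
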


\begin{proof}
First, consider an arbitrary $(\F_k)$-adapted, real-valued process $(g^{(n)}_k)_{k=0,\dots,T_n}$ satisfying the bound $\sup_{k\leq T_n} |g^{(n)}_k|\leq 1$ a.s. Then for $I=b,a$ and any $t\in[0,T]$,
\[\E\left(\sum_{k=1}^{\ttn} g^{(n)}_{k-1}\delta Z^{(n)}_ {I,k}\right)^2 = \tn \sum_{k=1}^{\ttn} \E\left(g^{(n)}_{k-1}\right)^2 \leq t.\]
Next, consider an arbitrary $(\F_k)$-adapted, $C_b([-M,M])$-valued process $(h^{(n)}_k(\cdot))_{k=0,\dots,T_n}$ satisfying the bound $\sup_{k\leq T_n} \|h^{(n)}_k(\cdot)\|_{L^\infty}\leq 1$ a.s. Applying the estimate in Remark \ref{rmk:jumpEstimate}, we have for $I=b,a$ and any $t\in[0,T]$,
\begin{align*}
 \E\left|\sum_{k=1}^{\ttn} \int_{[-M,M]} h^{(n)}_{k-1}(y)  \mu^{J^{(n)}}_I\left(\left[\t_k, \t_{k+1}\right) \times dy\right)\right|
&\leq \E\left[\sum_{k=1}^{\ttn} \int_{[-M,M]}  \nu_I^{J^{(n)}}\left(\left[\t_k, \t_{k+1}\right) \times dy\right)\right]\\
 &\leq \sum_{k=1}^{\ttn}\tn \left[Q_I([-M,M])+ a_n\right]\\
 &\leq \left(Q_I([-M,M])+ a_n\right) \, t \leq Ct
\end{align*}
for some $C>0$, independent of $n$, since $Q_I$ is a finite measure by Assumption \ref{ass:ExQ} and $(a_n)_{n\in \N}$ is a deterministic null-sequence. Therefore, all four integrals considered above are 
bounded in probability for all $t \in [0,T]$, uniformly in $n\in\N$. As $g^{(n)}$ and $h^{(n)}$ were arbitrary, this establishes uniform tightness of $(Z_b^{(n)},Z_a^{(n)},X^{(n)})_{n\in\N}$.
 \end{proof}
 
 The following technical lemma is used to prove Proposition \ref{res:conRemainder} and provides an estimate for Hilbert space valued, discrete time martingales. It is a direct consequence of Theorem 6.1 in \cite{Pisier86}:

\begin{Lem}\label{techL:L2est}
Let $H$ be a Hilbert space. Then there exists a constant $C>0$ such that for every $H$-valued, discrete time martingale $X$ with $X_0=0$, we have
\[    \E\left(\sup_{i\geq 1}\|X_i\|^2_H\right) \leq C\E\left[\sum_{i=1}^{\infty} \|X_i - X_{i-1}\|^2_H\right].\]
\end{Lem}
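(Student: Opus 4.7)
The plan is to combine Doob's $L^2$ maximal inequality with the orthogonality of martingale increments in a Hilbert space; this is the $H$-valued analogue of the classical scalar bound $\E[\sup_i X_i^2]\leq 4\sum_i\E(\delta X_i)^2$ for centered martingales.

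First, I would argue that $(\|X_i\|_H)_{i\geq 0}$ is a non-negative submartingale: applying conditional Jensen to the convex map $x\mapsto\|x\|_H$, for $i\leq j$ one has $\|X_i\|_H=\|\E[X_j\mid\mathcal{F}_i]\|_H\leq\E[\|X_j\|_H\mid\mathcal{F}_i]$. Then Doob's $L^2$ maximal inequality yields $\E[\sup_{i\leq n}\|X_i\|_H^2]\leq 4\,\E\|X_n\|_H^2$ for every finite $n\geq 1$.

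Second, I would exploit the Hilbert structure through the Pythagorean identity for martingale differences. Writing $dX_i:=X_i-X_{i-1}$, expand $\|X_n\|_H^2=\sum_{i=1}^n\|dX_i\|_H^2+2\sum_{i<j}\langle dX_i,dX_j\rangle_H$. For $i<j$, the $H$-valued martingale property gives $\E[dX_j\mid\mathcal{F}_i]=0$, whence $\E\langle dX_i,dX_j\rangle_H=\E\langle dX_i,\E[dX_j\mid\mathcal{F}_i]\rangle_H=0$. Therefore $\E\|X_n\|_H^2=\sum_{i=1}^n\E\|dX_i\|_H^2$, and plugging into the Doob estimate gives $\E[\sup_{i\leq n}\|X_i\|_H^2]\leq 4\sum_{i=1}^n\E\|dX_i\|_H^2$.

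Finally, I would pass to the limit $n\to\infty$ by monotone convergence on both sides, obtaining the stated inequality with explicit constant $C=4$. The only (minor) technical point is justifying that conditional expectations commute with the inner product for Bochner-integrable $H$-valued variables; this is routine and harmless, since if $\sum_i\E\|dX_i\|_H^2=\infty$ the bound is trivial, and otherwise each $dX_i$ lies in $L^2(\Omega;H)$ so the computation is literally as in the scalar case applied coordinatewise. Alternatively, one can simply quote the result: every Hilbert space has martingale type $2$, and the inequality is then an immediate special case of Theorem~6.1 in \cite{Pisier86} without any further work.
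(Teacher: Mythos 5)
Your argument is correct, and it takes a more self-contained route than the paper, which disposes of the lemma in one line by citing Theorem~6.1 of \cite{Pisier86} (Hilbert spaces have martingale type $2$) --- exactly the alternative you mention at the end. Your elementary proof is sound: conditional Jensen makes $(\|X_i\|_H)_i$ a non-negative submartingale, Doob's $L^2$ maximal inequality gives the factor $4$, the orthogonality $\E\langle \delta X_i,\delta X_j\rangle_H=0$ for $i<j$ turns $\E\|X_n\|_H^2$ into $\sum_{i\leq n}\E\|\delta X_i\|_H^2$, and monotone convergence passes to $n\to\infty$; the reduction to the case $\sum_i\E\|\delta X_i\|_H^2<\infty$ correctly handles the integrability needed to interchange conditional expectation and the inner product. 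What your approach buys is an explicit constant $C=4$ and independence from the literature on type and cotype; what the paper's citation buys is brevity and the fact that Pisier's theorem covers the strictly more general class of $2$-smooth Banach spaces, of which the Hilbert-space case used here is the easiest instance. Either version is acceptable for the application in Proposition~\ref{res:conRemainder}, where only the existence of some finite $C$ matters.
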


\section*{Acknowledgement}
Financial support by MATH+ through project funding AA4-4 ‘‘Stochastic modeling of intraday electricity markets’’ is gratefully acknowledged.

\bibliographystyle{plain}
\bibliography{lit}
\end{document}